\documentclass[reqno,11pt]{amsart}
\usepackage{amsmath,amsfonts,amssymb,amsxtra,latexsym,amscd,enumerate,amsthm,verbatim}

\usepackage{graphicx}

\usepackage[margin=1.40in]{geometry}
\setlength{\textwidth}{15.8cm}
\setlength{\oddsidemargin}{0.2cm}
\setlength{\evensidemargin}{0.2cm}
\numberwithin{equation}{section}

\def\be{{\beta}}

\def\eps{\epsilon}

\def\varep{\varepsilon}

\def\phii{\widetilde{\varphi}}

\def\al{\alpha}

\newtheorem{theorem}{Theorem}[section]
\newtheorem{lemma}[theorem]{Lemma}

\newtheorem{proposition}[theorem]{Proposition}
\newtheorem{definition}[theorem]{Definition}
\newtheorem{remark}[theorem]{Remark}

\def\beq{\begin{equation}}
\def\eeq{\end{equation}}

\def\beq{\begin{equation}}
\def\eeq{\end{equation}}

\begin{document}

\title{On the global regularity for a Wave-Klein-Gordon coupled system}

\author{Alexandru D. Ionescu}
\address{Princeton University}
\email{aionescu@math.princeton.edu}

\author{Benoit Pausader}
\address{Brown University}
\email{benoit\_pausader@brown.edu}

\thanks{The first author was supported in part by NSF grant DMS-1600028 and NSF-FRG grant DMS-1463753. The second author was supported in part by NSF grant DMS-1362940 and by a Sloan Research fellowship. Part of this work was done while the authors were staying at the IHES in the summer of $2016$.}

\begin{abstract}
In this paper we consider a coupled Wave-Klein-Gordon system in 3D, and prove global regularity and modified scattering for small and smooth initial data with suitable decay at infinity. This system was derived in \cite {WaConf} and \cite{LeMa} as a simplified model for the global nonlinear stability of the Minkowski space-time for self-gravitating massive fields.  
\end{abstract}

\maketitle

\setcounter{tocdepth}{1}

\tableofcontents

\section{Introduction}\label{equ}

We consider the Wave-Klein-Gordon (W-KG) system in $3+1$ dimensions,
\begin{equation}\label{on1}
\begin{split}
-\square u&=A^{\alpha\beta}\partial_\alpha v\partial_\beta v+Dv^2,\\
(-\square +1)v&=uB^{\alpha\beta}\partial_\alpha\partial_\beta v+Euv,
\end{split}
\end{equation}
where $u,v$ are real-valued functions, and $A^{\alpha\beta}$, $B^{\alpha\beta}$, $D$, and $E$ are real constants. Without loss of generality we may assume that $A^{\alpha\beta}=A^{\beta\alpha}$ and $B^{\alpha\beta}=B^{\beta\alpha}$, $\alpha,\beta\in\{0,1,2,3\}$.   

The system \eqref{on1} was derived by Wang \cite{WaConf} and LeFloch-Ma \cite{LeMa} as a model for the full Einstein-Klein-Gordon (E-KG) system
\begin{equation}\label{on1.3}
Ric_{\al\be}=\mathbf{D}_\al\psi\mathbf{D}_\be\psi+(1/2)\psi^2 g_{\al\be},\qquad \square_g\psi=\psi.
\end{equation}
Intuitively, the deviation of the Lorentzian metric $g$ from the Minkowski metric is replaced by a scalar function $u$, and the massive scalar field $\psi$ is replaced by $v$. The system \eqref{on1} keeps the same linear structure as the Einstein-Klein-Gordon equations in harmonic gauge, but only keeps, schematically, quadratic interactions that involve the massive scalar field (the semilinear terms in the first equation and the quasilinear terms in the second equation coming from the reduced wave operator).

A natural question in the context of evolution equations is the question of global sta\-bi\-li\-ty of certain physical solutions. For example, for the Einstein-vacuum equation, global stability of the Minkowski space-time is a central theorem in General Relativity, due to Christodoulou-Klainerman \cite{ChKl} (see also the more recent papers of Klainerman-Nicolo \cite{KlNi}, Lindblad-Rodnianski \cite{LiRo}, Bieri-Zipser \cite{BiZi}, and Speck \cite{Sp}). 

In our case, this question was first addressed by Wang \cite{WaConf} and LeFloch-Ma \cite{LeMa}, who proved global stability for the (W-KG) system in the case of small, smooth, and compactly supported perturbations. Global stability of the full (E-KG) system was then proved by LeFloch-Ma \cite{LeMaEKG}, in the case of small and smooth perturbations that agree with a Schwarzschild solution outside a compact set (see also the outline of a similar theorem by Wang \cite{Wa1}).

The analysis in \cite{WaConf,LeMa,LeMaEKG,Wa1} relies on refinements of the hyperbolic foliation method (see also \cite{LeMaHyp} for a longer exposition of the method). To implement this method one needs to first have control of the solution on an initial hyperboloid, and then propagate this control to the interior region. As a result, this approach appears to be restricted to the case when one can establish such good control on an initial hyperboloid. Due to the finite speed of propagation, this is possible in  the case of compactly supported data for the (W-KG) system, and in the case of data that agrees with $(S_m,0)$ outside a compact set for the (E-KG) system (here $S_m$ is a Schwarzschild solution with mass $m\ll 1$). In the Einstein-vacuum case, the corresponding global regularity result for such "restricted data" was proved by Lindblad-Rodnianski \cite{LiRo}. See also the work of Friedrich \cite{Fr} for an earlier semi-global result.  

Our goal in this paper is to initiate the study of global solutions for the systems \eqref{on1} and \eqref{on1.3}, in the case of small and smooth data that decay at suitable rates at infinity, but are not necessarily compactly supported. This case is  physically relevant because of the large family of asymptotically flat initial data sets. We consider here only the simpler (W-KG) model \eqref{on1}, and hope to return to the full Einstein-Klein-Gordon model in the future.

Our framework in this paper is inspired by the recent advances in the global existence theory for quasilinear {\it{dispersive}} models, such as plasma models and water waves. We rely on a combination of energy estimates and Fourier analysis. At a very general level one should think that energy estimates are used, in combination with vector-fields, to control high regularity norms of the solutions, while the Fourier analysis is used, mostly in connection with normal forms, analysis of resonant sets, and a special "designer" norm, to prove dispersion and decay in lower regularity norms.

\subsection{The main theorem} Our main theorem concerns the global regularity of the (W-KG) system \eqref{on1}, for small initial data $(u_0,v_0)$. To state this theorem precisely we need some notation. We define the operators on $\mathbb{R}^3$
\begin{equation}\label{qaz1}
\Lambda_{wa}:=|\nabla|,\qquad \Lambda_{kg}:=\langle\nabla\rangle=\sqrt{|\nabla|^2+1}.
\end{equation}
We define also the Lorentz vector-fields $\Gamma_j$ and the rotation vector-fields $\Omega_{jk}$,
\begin{equation}\label{qaz2}
\Gamma_j:=x_j\partial_t+t\partial_j,\qquad \Omega_{jk}:=x_j\partial_k-x_k\partial_j,
\end{equation}
for $j,k\in\{1,2,3\}$. These vector-fields commute with both the wave operator and the Klein-Gordon operator. For any $\alpha=(\alpha_1,\alpha_2,\alpha_3)\in(\mathbb{Z}_+)^3$ we define
\begin{equation}\label{qaz3}
\partial^\alpha:=\partial_1^{\alpha_1}\partial_2^{\alpha_2}\partial_3^{\alpha_3},\qquad \Omega^\alpha:=\Omega_{23}^{\alpha_1}\Omega_{31}^{\alpha_2}\Omega_{12}^{\alpha_3},\qquad \Gamma^\alpha:=\Gamma_1^{\alpha_1}\Gamma_2^{\alpha_2}\Gamma_3^{\alpha_3}.
\end{equation}
For any $n\in\mathbb{Z}_+$ we define $\mathcal{V}_n$ as the set of differential operators of the form
\begin{equation}\label{qaz3.1}
\mathcal{V}_n:=\big\{\Gamma_1^{a_1}\Gamma_2^{a_2}\Gamma_3^{a_3}\Omega_{23}^{b_1}\Omega_{31}^{b_2}\Omega_{12}^{b_3}\partial_0^{\alpha_0}\partial_1^{\alpha_1}\partial_2^{\alpha_2}\partial_3^{\alpha_3}:\sum_{j=1}^3(a_j+b_j)+\sum_{k=0}^3\alpha_k\leq n\big\}.
\end{equation}

To state our main theorem we need to introduce several Banach spaces of functions on $\mathbb{R}^3$.

\begin{definition}\label{spaces}
For $a\geq 0$ let $H^a$ denote the usual Sobolev spaces of index $a$ on $\mathbb{R}^3$. We define also the Banach spaces $H^{a,b}_\Omega$, $a,b\in\mathbb{Z}_+$, by the norms
\begin{equation}\label{qaz4}
\|f\|_{H^{a,b}_{\Omega}}:=\sum_{|\alpha|\leq b}\|\Omega^\alpha f\|_{H^a}.
\end{equation} 
We also define the weighted Sobolev spaces $H^{a,b}_{S,wa}$ and $H^{a,b}_{S,kg}$ by the norms
\begin{equation}\label{qaz4.2}
\|f\|_{H^{a,b}_{S,wa}}:=\sum_{|\beta'|\leq |\beta|\leq b}\|x^{\beta'}\partial^{\beta} f\|_{H^a},\qquad \|f\|_{H^{a,b}_{S,kg}}:=\sum_{|\beta|,|\beta'|\leq b}\|x^{\beta'}\partial^{\beta} f\|_{H^a},
\end{equation}
where $x^{\beta'}=x_1^{\beta'_1}x_2^{\beta'_2}x_3^{\beta'_3}$ and $\partial^\beta:=\partial_1^{\beta_1}\partial_2^{\beta_2}\partial_3^{\beta_3}$. Notice that $H^{a,b}_{S,kg}\hookrightarrow H^{a,b}_{S,wa}\hookrightarrow H^{a,b}_{\Omega}\hookrightarrow H^a$.
\end{definition}

We are now ready to state our main theorem, which is a global regularity result for the system \eqref{on1}. For simplicity we will assume that $B^{00}=0$. This can be achieved by replacing a term like $B^{00}\partial_0^2v$ with $B^{00}(\Delta v-v)$ (which can be incorporated with the other quadratic terms in the nonlinearity), at the expense of creating cubic order terms in the second equation in \eqref{on1} which do not change the analysis. 

\begin{theorem}\label{main} Assume that $N_0:=40$, $N_1:=3$, $d:=10$, and $A^{\al\be},B^{\al\be},D,E\in\mathbb{R}$, $B^{00}=0$. Assume that $u_0,\dot{u}_0,v_0,\dot{v}_0:\mathbb{R}^3\to\mathbb{R}$ are real-valued initial data satisfying the smallness assumptions
\begin{equation}\label{ml1}
\sum_{n=0}^{N_1}\big[\|\,|\nabla|^{-1/2}U^{wa}_0\|_{H^{N(n),n}_{S,wa}}+\|U^{kg}_0\|_{H^{N(n),n}_{S,kg}}\big]\leq\varep_0\leq\overline{\varep},
\end{equation}
where $\overline{\varep}$ is a sufficiently small constant (depending only on the constants $A^{\alpha\beta}$, $B^{\alpha\beta}$, $D$, $E$ in \eqref{on1}), $N(0)=N_0+3d$, $N(n)=N_0-dn$ for $n\geq 1$, and
\begin{equation}\label{ml1.1}
U^{wa}_0:=\dot{u}_0-i\Lambda_{wa} u_0,\qquad U^{kg}_0:=\dot{v}_0-i\Lambda_{kg} v_0.
\end{equation}
Then there is a unique real-valued global solution $(u,v)$ of the system \eqref{on1} with $(|\nabla|^{1/2}u,v)\in C([0,\infty):H^{N(0)}\times H^{N(0)+1})\cap C^1([0,\infty):H^{N(0)-1}\times H^{N(0)})$ , with initial data 
\begin{equation*}
u(0)=u_0,\qquad\partial_tu(0)=\dot{u}_0,\qquad v(0)=v_0,\qquad\partial_tv(0)=\dot{v}_0.
\end{equation*} 
Moreover, with $\overline{\delta}=10^{-7}$, the solution $(u,v)$ satisfies the energy bounds with slow growth, 
\begin{equation}\label{ml2}
\sup_{n\leq N_1,\,\mathcal{L}\in\mathcal{V}_n}\big\{\||\nabla|^{-1/2}(\partial_t-i\Lambda_{wa})\mathcal{L}u(t)\|_{H^{N(n)}}+\|(\partial_t-i\Lambda_{kg})\mathcal{L}v(t)\|_{H^{N(n)}}\big\}\lesssim\varep_0(1+t)^{\overline{\delta}},
\end{equation}
for any $t\in[0,\infty)$, and the pointwise decay bounds
\begin{equation}\label{ml2.1}
\sum_{|\alpha|+\alpha_0\leq N_0-2d}\big\{\|\partial^\alpha\partial_0^{\alpha_0} u(t)\|_{L^\infty}+\|\partial^\alpha\partial_0^{\alpha_0} v(t)\|_{L^\infty}\big\}\lesssim\varep_0(1+t)^{\overline{\delta}-1}.
\end{equation}
\end{theorem}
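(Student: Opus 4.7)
The plan is to run a continuity/bootstrap argument on two complementary families of norms: high-order energy-type norms (controlling Sobolev regularity with vector-field weights) and a low-order dispersive/designer $Z$-norm measured on the Fourier side of the profiles $V^{wa}(t):=e^{it\Lambda_{wa}}U^{wa}(t)$ and $V^{kg}(t):=e^{it\Lambda_{kg}}U^{kg}(t)$. The bootstrap assumption will combine the slow-growth energy bound \eqref{ml2}, an appropriate $Z$-norm bound on $V^{wa},V^{kg}$, and pointwise bounds like \eqref{ml2.1}; standard local well-posedness in the Sobolev spaces of Definition~\ref{spaces} plus finite speed of propagation gives the short-time existence and continuation criterion, so everything reduces to improving these three bounds under their own assumption.

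For the high-order part I would apply operators $\mathcal{L}\in\mathcal{V}_n$ to \eqref{on1} and derive the commuted system, using that $\Gamma_j$ and $\Omega_{jk}$ commute with $\square$ and $\square-1$. For the Klein-Gordon equation, the quasilinear term $uB^{\alpha\beta}\partial_\alpha\partial_\beta v$ forces a modified energy functional of the form $\int (1-uB^{00})|\partial_t \mathcal{L}v|^2 + \cdots$ (we simplified by $B^{00}=0$, which kills the most dangerous coefficient of $\partial_t^2$), so the energy identity produces bulk terms involving $\partial u$ and $\partial v$ in $L^\infty$, which should close using the pointwise bounds \eqref{ml2.1}. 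The commutators, after expanding $\mathcal{L}(uB^{\alpha\beta}\partial_\alpha\partial_\beta v)$ by the Leibniz rule, split each quadratic interaction into a factor that loses at most $n$ derivatives/vector fields (controlled by the $H^{N(n),n}$ norm) and a factor kept at low order (controlled in $L^\infty$ with decay). Because $N(n)=N_0-dn$ decreases by $d=10$ at each step, there is ample room to absorb these losses and obtain a growth factor $(1+t)^{C\varep_0}$, which we take to be $(1+t)^{\overline\delta}$ provided $\varep_0$ is small.

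For the dispersive part, I would write the Duhamel formula for the profiles and do a Fourier-space bilinear analysis of the three interactions $WW\to W$ (the $Dv^2$ term, and the $A^{\alpha\beta}\partial v\partial v$ term via $v$-modes), $WK\to K$ and $KK\to K$. The main tool is integration by parts in time (normal forms) against the phases $\Phi_{wa,\pm\pm}(\xi,\eta)=|\xi|\mp\langle\eta\rangle\mp\langle\xi-\eta\rangle$ and $\Phi_{kg,\pm\pm}(\xi,\eta)=\langle\xi\rangle\mp|\eta|\mp\langle\xi-\eta\rangle$; the key arithmetic observation is that the first phase is bounded below by $c\langle\eta\rangle^{-1}$ on the region where both KG inputs are comparable, making the quadratic term $Dv^2$ essentially amenable to a Poincar\'e--Shatah normal form, while the $uB^{\alpha\beta}\partial_\alpha\partial_\beta v$ interaction admits a resonant set on $\{|\eta|=0\}$ that is reached only when the wave input has zero frequency, and is neutralized by the $|\nabla|^{-1/2}$ weight on the wave field together with the quasilinear structure (two full derivatives on the KG input, matching the weight at low frequency). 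The output of this analysis is (i) improved control of the $Z$-norm on the profiles $V^{wa},V^{kg}$, and (ii) pointwise decay \eqref{ml2.1} via stationary phase / linear dispersive estimates, where the $t^{-1}$ decay for the wave piece uses the $\Lambda_{wa}^{-1/2}$ weight to handle the delicate low-frequency range and the $t^{-3/2}$ decay for $v$ uses the standard Klein--Gordon kernel estimate.

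The principal obstacle is the quasilinear term $uB^{\alpha\beta}\partial_\alpha\partial_\beta v$: because $u$ only decays as $t^{-1}$, this interaction contributes a long-range, non-integrable source to the Klein-Gordon equation, exactly the source of modified (rather than free) scattering announced in the abstract. To handle it I expect to introduce a phase correction of Fourier-multiplier type, $\tilde V^{kg}(t,\xi):=e^{iH(t,\xi)}V^{kg}(t,\xi)$ with $H$ defined from the zero-frequency evaluation of the quasilinear symbol along the characteristic, and then prove that $\tilde V^{kg}$ admits genuine scattering in the $Z$-norm. Closing this correction inside the bootstrap, while simultaneously proving that the wave field has sufficient decay \emph{and} a well-defined trace at null infinity to define $H$, is the central difficulty; the rest of the proof is a fairly systematic (but lengthy) multilinear analysis organized by the frequency-localized decomposition built from $\mathcal{V}_n$ and $H^{N(n),n}_{S,\ast}$.
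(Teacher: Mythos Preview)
Your high-level architecture is right and matches the paper's: a bootstrap on energy norms with vector-fields plus a designer $Z$-norm on the profiles, normal forms against the phases $\Phi_{\sigma\mu\nu}$, and a phase correction on $V^{kg}$ to handle the long-range quasilinear term. But several points need correction. First, the interaction types are garbled: there are only two, $KG\times KG\to W$ (from $A^{\al\be}\partial_\al v\partial_\be v+Dv^2$) and $W\times KG\to KG$ (from $uB^{\al\be}\partial_\al\partial_\be v$); there is no $WW\to W$ or $KK\to K$. Second, the phase correction is not built from a trace of $u$ at null infinity but from the low-frequency part of $u$ evaluated along Klein--Gordon characteristics in physical space, $\Theta(\xi,t)=\mathfrak{q}_+(\xi)\int_0^t u_{low}(s\nabla\Lambda_{kg}(\xi),s)\,ds$; no asymptotic limit needs to be shown to exist for the bootstrap to close.

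More substantively, your energy closure is too optimistic. A naive Gronwall with $\|\partial u\|_{L^\infty}\lesssim t^{-1}$ does not handle the top-order commutator $(\mathcal{L}u)\,\partial^2 v$: the energy only controls $\||\nabla|^{-1/2}U^{wa}_{\mathcal{L}}\|_{H^{N(n)}}$, so at wave frequencies $\lesssim 1/t$ one loses a factor $t^{1/2}$ recovering $\|\mathcal{L}u\|_{L^2}$, and this must be compensated by the \emph{optimal} $t^{-3/2}$ decay of $v$. But the full $v$ does not decay at $t^{-3/2}$; only the part of the profile supported at $|x|\ll t$ does, and isolating it requires weighted $L^2$ control of $\partial_\xi\widehat{V^{kg}_{\mathcal{L}}}$ (equivalently, $2^j\|Q_{jk}V^{kg}_{\mathcal{L}}\|_{L^2}$), which is a separate component of the bootstrap you have omitted. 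The paper carries exactly this third family of norms (see \eqref{bootstrap2.2}) and links it to the energy via the identity $\widehat{\Gamma_l U}=i\partial_{\xi_l}\widehat{\mathcal N}+e^{-it\Lambda}\partial_{\xi_l}(\Lambda\widehat V)$. Even with that in hand, the energy estimates themselves use normal forms (integration by parts in $s$ against $\Phi_{\sigma\mu\nu}$) to treat the wave frequencies in the intermediate range $1/t\ll|\xi|\lesssim 1$; the growth exponent $\overline\delta$ is a fixed small number coming from this hierarchy, not $C\varep_0$ from a Gronwall.
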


We conclude this subsection with several remarks.

\begin{remark}\label{rem}

(i) The hypothesis on the data \eqref{ml1} can be expressed easily in terms of the physical variables $u_0$, $\dot{u}_0$, $v_0$, $\dot{v}_0$, which are related to the real and the imaginary parts of the normalized variables $U^{wa}_0$ and $U^{kg}_0$. It can also be expressed in the Fourier space, i.e.
\begin{equation}\label{ml1.11}
\sum_{|\beta'|\leq\gamma\leq n}\|\,|\xi|^{-1/2+\gamma}\langle\xi\rangle^{N(n)}\partial^{\beta'}_\xi\widehat{U^{wa}_0}\|_{L^2_\xi}+\sum_{|\beta'|,\gamma\leq n}\|\,|\xi|^{\gamma}\langle\xi\rangle^{N(n)}\partial^{\beta'}_\xi\widehat{U^{kg}_0}\|_{L^2_\xi}\lesssim\varep_0,
\end{equation}
for $n\in[0,N_1]$.  

(ii) The low frequency structure of the wave component, in particular the $|\nabla|^{-1/2}$ multiplier, is important. This is due to the fact that the bilinear interactions in the (W-KG) system are resonant only when the frequency of the wave component is $0$, so we need precise control of these low frequencies. See also subsection \ref{NESS} below for a discussion of some of the main bilinear interactions that involve low frequencies of the wave component. 

(iii) One can derive more information about the global solution $(u,v)$ as part of the bootstrap argument. In fact, the solution satisfies the main bounds \eqref{bootstrap3.1}--\eqref{bootstrap3.4} in Proposition \ref{bootstrap}. 

At a qualitative level, we also provide a precise description of the asymptotic behavior of the solution. More precisely, the wave component $u$ scatters linearly{\footnote{The linear scattering here is likely due to the very simple semilinear equation for $u$. In the case of the full Einstein-Klein-Gordon system, one expects modified scattering for the metric components as well.}} (in a weaker norm), in the sense that there exists a profile $V^{wa}_\infty\in L^2(\mathbb{R}^3)$ such that 
\begin{equation}\label{ScatB}
\Vert(\partial_t-i\Lambda_{wa})u(t)-e^{-it\Lambda_{wa}}V_\infty^{wa}\Vert_{L^2}\to 0,\quad \hbox{as } t\to\infty.
\end{equation}       
On the other hand, the Klein-Gordon component undergoes nonlinear scattering, in the sense that there exists a profile $V^{kg}_\infty$ such that
\begin{equation}\label{ModScat}
\begin{split}
&\Vert(\partial_t-i\Lambda_{kg})v(t)-e^{-it\Lambda_{kg}+i\Theta(\xi,t)}V_\infty^{kg}\Vert_{L^2}\to 0,\quad \hbox{as } t\to\infty,\\
&\text{where }\Theta(\xi,t):=\mathfrak{q}_+(\xi)\int_0^tu_{low}(s\nabla\Lambda_{kg}(\xi),s)\,ds.
\end{split}
\end{equation}
Here $\mathfrak{q}_+(\xi)$ denotes a suitable multiplier that depends on the coefficients $B^{\al\be}$ and E, and $u_{low}$ is a low-frequency truncation of $u$. The phase $\Theta(\xi,t)$ is only relevant if $u_{low}$ is not integrable along Klein-Gordon characteristics, which is the case in our problem. See subsection \ref{NESS} below.
\end{remark}

\subsection{Overview of the proof}

The system \eqref{on1} is a quasilinear system of hyperbolic and dispersive equations. For general such systems, even small and smooth data can lead to finite-time blow-up \cite{Jo} and the analysis depends on fine properties of the propagation of small waves (i.e. the linearized operator)  and on the precise structure of the nonlinearity (null forms).

One of the main difficulties in the analysis of \eqref{on1} comes from the fact that we have a genuine system in the sense that the linear evolution admits different speeds of propagation, corresponding to wave and Klein-Gordon propagation. As a result the set of ``characteristics'' is more involved and one has a more limited set of geometric vector-fields available.

On a more technical level, it turns out that the main feature of the system above is the slow decay of the low frequencies of wave component $u$ in the {\it interior} of the light cone and in particular along the characteristics associated to the Klein-Gordon operator. This nonlinear effect ultimately leads to modified scattering for the Klein-Gordon component. This effect would not be present if the quasilinear term was of the form $\partial u\cdot \nabla^2v$, in which case one recovers linear scattering, see \cite{Kat}.

A number of important techniques have emerged over the years in the study of hyperbolic systems of wave-type, starting with seminal contributions of John, Klainerman, Shatah, Simon, Christodoulou, and Alinhac \cite{Alin2,Alin3,Ch,ChKl,Jo,JoKl,Kl2,KlVf,Kl,Kl4,Sh,Si}. These include the vector-field method, the normal form method, and the isolation of null structures. In our case, the nonlinearity does not present a null structure, but has a delicate resonant pattern, and the coupled system has a limited set of vector-fields which we use in our analysis.

Our approach, which builds on these early contributions can be traced back to ideas introduced by Delort-Fang-Xue \cite{Del,DeFa,DeFaXu}, Germain-Masmoudi-Shatah \cite{GeMaSh,GeMaSh2}, Gustafson-Nakanishi-Tsai \cite{GuNaTs}, and developed by the authors and coauthors \cite{De,DeIoPa,DeIoPaPu,GuIoPa,GuPa,IoPa1,IoPa2,IoPu2,IoPu,KaPu}. We also refer to \cite{Geo,Kat} for additional works on systems of wave and Klein-Gordon equations, and to \cite{GeMa,IoPa2,De} for recent work on systems of Klein-Gordon equations with different speeds.

In this paper we use a combination of energy estimates and Fourier analysis to control our solutions. More precisely, we prove: 

\begin{itemize}
\item Energy estimates to control high Sobolev norms and weighted norms using the vector-fields in $\mathcal{V}_n$. All the energy bounds are allowed to grow slowly in time, at various rates. These energy bounds are also transferred to prove $L^2$ bounds with slow time growth on the linear profiles and their derivatives in the Fourier space. 

\item Dispersive estimates, which lead to sharp decay. These are uniform bounds in time (i.e. without slow time growth), in a suitable lower regularity $Z$ norm. The choice of this "designer" norm is important, and we construct it using a space-frequency atomic decomposition of the profiles of the solution, as in some of our earlier papers{\footnote{In general, one should think of the $Z$ norm as being connected to the location and the shape of the set of space-time resonances of the system, as in \cite{IoPa2,GuIoPa, DeIoPa, DeIoPaPu}. In our case here there are no nontrivial space-time resonances and the construction is simpler.}} starting with \cite{IoPa1}. At this stage, in order to prove uniform bounds it is important to identify a nonlinear correction of the phase and prove nonlinear scattering. 
\end{itemize}

\subsection{Nonlinear effects}\label{NESS}
In this subsection we isolate two of the main nonlinear interactions in the system \eqref{on1}, and explain their relevance in the proof.

\subsubsection{The low frequency structure of the wave component} By inspection of \eqref{on1} we observe first that bounding the quadratic terms amounts to control trilinear integrals of the form
\begin{equation*}
I=\iint u\cdot \partial^\alpha v\cdot \partial^\beta v \,\,dx dt.
\end{equation*}
Thus the resonant analysis is controlled by (essentially) only one type of quadratic phase
\begin{equation}\label{PhaseReso}
\Phi(\xi_1,\xi_2)=\Lambda_{kg}(\xi_1)\pm\Lambda_{kg}(\xi_2)\pm \Lambda_{wa}(\xi_1+\xi_2),\qquad\vert\Phi\vert\gtrsim (1+\vert\xi_1\vert+\vert\xi_2\vert)^{-2}\vert\xi_1+\xi_2\vert.
\end{equation}
Thus we expect that the interactions where the wave component has small frequency, in particular when $t\vert\xi_1+\xi_2\vert\lesssim 1$, will play an important role in the analysis.

One of the main difficulties in proving energy estimates comes from the imbalance in the quasilinear term $u\nabla^2v$, since energy estimates only lead to control of derivatives of $u$. To illustrate this, apply a commuting vector-field $\Gamma$ to \eqref{on1} to get a system of the schematic form
\begin{equation*}
\begin{split}
-\Box (\Gamma u)&=v\cdot \Gamma v,\\
(-\Box+1)(\Gamma v)&=(\Gamma u)\partial^2v+\left\{u\cdot\partial^2(\Gamma v)+u\cdot\partial^2v\right\}.
\end{split}
\end{equation*}
The terms in curly bracket in the equation for $v$ can be treated easily (assuming bootstrap energy estimates for $v$) and will be discarded for the following discussion. The first term in the equation for $v$ is more problematic because standard energy estimates would only allow us to control energy functionals of the form
\begin{equation*}
\mathcal{E}^2_w(\Gamma u)\approx \Vert \nabla_{x,t}\Gamma u\Vert_{L^2}^2,\qquad \mathcal{E}^2_{kg}(\Gamma v)\approx \Vert \Gamma v\Vert_{H^1}^2.
\end{equation*}
Thus $\Gamma u\cdot\partial^2 v$ is not well controlled when $u$ has small frequencies $\approx 1/t$, and we have an unwanted growth factor of up to $t$.

In order to compensate for this and recover the missing derivative, we use the faster (optimal) decay of the Klein-Gordon solution in two steps and the special structure of the system
\begin{equation*}
KG\times KG\to Wave,\qquad Wave\times KG\to KG.
\end{equation*}
This allows us to control first $\vert\nabla\vert^{-1/2}\Gamma u$ in energy norm. Indeed, assuming that $u$ is located at frequencies $\vert\xi\vert\approx 1/t$, the first equation gives
\begin{equation*}
\begin{split}
\partial_t\mathcal{E}_w(\vert\nabla\vert^{-1/2}\Gamma u)&\lesssim \Vert \vert\nabla\vert^{-1/2}P_{\sim 1/t}(v\cdot\Gamma v)\Vert_{L^2}\lesssim (1+t)^{1/2}\Vert v\Vert_{L^\infty}\cdot\mathcal{E}_{kg}(\Gamma v).
\end{split}
\end{equation*}
We would thus obtain an acceptable contribution, at least as long as we can show that $v$ decays pointwise at the optimal rate $(1+t)^{-3/2}$. On the other hand, if we now compute the corresponding contribution in the energy estimates for $v$, we obtain (discarding the easy terms, and assuming again that $u$ is located at frequencies $\approx 1/t$)
\begin{equation*}
\begin{split}
\partial_t\mathcal{E}_{kg}(\Gamma v)&\lesssim \Vert \Gamma u\cdot \partial^2 v\Vert_{L^2}\lesssim \Vert \vert\nabla\vert^{-1/2}\cdot (\nabla_{x,t}\vert\nabla\vert^{-1/2}\Gamma u)\Vert_{L^2}\Vert v\Vert_{W^{2,\infty}}\\
&\lesssim (1+t)^{1/2}\Vert v\Vert_{W^{2,\infty}}\cdot\mathcal{E}_w(\vert\nabla\vert^{-1/2}\Gamma u).
\end{split}
\end{equation*}
Once again we obtain an acceptable contribution, as long as we can show that $v$ has optimal decay in time. To prove this optimal decay{\footnote{In fact, we will not prove optimal decay for the full function $v$, but we will decompose $v=v_{\infty}+v_{2}$ where $v_{\infty}$ has optimal pointwise decay and $v_2$ is suitably small in $L^2$.}} we need to use the $Z$-norm, identify a nonlinear phase correction, and prove modified scattering for $v$ (see below for a discussion of this step).

This scheme allows us to deal with the contribution of the frequencies $|\xi|\approx 1/t$ coming from $u$ (and also explains the factor $|\nabla|^{-1/2}$ in the energy functionals for $u$). To deal with the contribution of larger frequencies we can start integrating by parts in time (the method of normal forms) and use the lower bound \eqref{PhaseReso} on the resonance phases.

\subsubsection{Long-range perturbations and modified scattering} Assume for simplicity that we consider radial solutions of the system
\begin{equation}\label{non2B}
\begin{split}
\Box u+\vert\nabla_{x,t}v\vert^2+v^2&=0,\qquad (u(0),\partial_tu(0))=(0,0),\\
(-\Box+1)v-u\Delta v&=0,\qquad (v(0),\partial_tv(0))=(\chi,0),\qquad\mathbf{1}_{\{\vert x\vert\le 1\}}\le\chi\le\mathbf{1}_{\{\vert x\vert\le 2\}}.
\end{split}
\end{equation}
Assume that $v$ decays no faster than a linear solution,
\begin{equation*}
\begin{split}
\vert\nabla_{x,t}v(t)\vert+\vert v(t)\vert\ge\varepsilon\langle t\rangle^{-3/2}\mathbf{1}_{\{\vert x\vert\le t/2\}}.
\end{split}
\end{equation*}
Using the explicit form of the linear propagator for the wave equation, and in particular, the fact that it is nonnegative, one can see that
\begin{equation}\label{LBB}
u(x,t)\gtrsim\varepsilon^2/\langle t\rangle,\qquad \vert x\vert\le t/4.
\end{equation}
Thus we see that $u$ has a substantial (i.e. non integrable) presence inside the light cone, where the characteristics of the Klein-Gordon equation are located. This is already a departure from linear behavior (although this only affects the behavior of $u$ on large spatial scales and disappears in the energy of $u$, as the scattering statement in $\dot{H}^1$ in \eqref{ScatB} suggests). In addition, since $u$ is nonnegative, there can be no gain by averaging (i.e. no normal form), and the contribution from $u\Delta v$ in \eqref{non2B} is indeed a long range quasilinear perturbation. 

To control a norm that does not grow (the $Z$ norm) we need to identify the correct asymptotic behavior and the correct nonlinear oscillations. Conjugating by the linear flow, letting
\begin{equation*}
V^{kg}(t)=e^{it\Lambda_{kg}}\left(\partial_t-i\Lambda_{kg}\right)v(t),
\end{equation*}
 leads to a nonlinear equation for the Klein-Gordon profile
\begin{equation*}
\begin{split}
\partial_t\widehat{V^{kg}}(\xi,t)&=\frac{1}{(2\pi)^3}\int_{\mathbb{R}^3}e^{it\Lambda_{kg}(\xi)}\widehat{u}(\eta,t)[B^{\alpha\beta}\widehat{\partial_\alpha\partial_\beta v}(\xi-\eta,t)+E\widehat{v}(\xi-\eta,t)]\,d\eta.
\end{split}
\end{equation*}
We write the right-hand side in terms of the linear profile $V^{kg}$ and extract the resonant interaction that corresponds to the case when $u$ has low frequencies, see \eqref{PhaseReso}. This leads to an equation of the form
\begin{equation*}
\begin{split}
\partial_t\widehat{V^{kg}}(\xi,t)&=\frac{i}{(2\pi)^3}\int_{\{\vert\eta\vert\ll 1\}}e^{it\left[\Lambda_{kg}(\xi)-\Lambda_{kg}(\xi-\eta)\right]}\widehat{u}(\eta,t)\mathfrak{q}_+(\xi-\eta)\widehat{V^{kg}}(\xi-\eta,t)d\eta +l.o.t.\\
&=i\mathfrak{q}_+(\xi)\widehat{V^{kg}}(\xi,t)\cdot \frac{1}{(2\pi)^3}\int_{\{\vert\eta\vert\ll 1\}}e^{it\left[\eta\cdot\nabla\Lambda_{kg}(\xi)+O(\vert\eta\vert^2)\right]}\widehat{u}(\eta,t)d\eta +l.o.t.\\
\end{split}
\end{equation*}
where $\mathfrak{q}_+$ denotes a real-valued multiplier. Discarding the quadratic error in the phase and performing the Fourier inversion leads to the ODE
\begin{equation*}
\partial_t\widehat{V^{kg}}(\xi,t)=i\mathfrak{q}_+(\xi)u_{low}(t\nabla\Lambda_{kg}(\xi),t)\cdot \widehat{V^{kg}}(\xi,t)+l.o.t.
\end{equation*}
This leads to a phase correction (written explicitly in \eqref{ModScat}) corresponding to integrating the effect of the quasilinear term along the characteristics of the Klein-Gordon flow $(t\nabla\Lambda_{kg}(\xi),t)$. This is consistent with a choice of $Z$-norm for $v$ controlling the amplitude of the solutions pointwise in Fourier space, but allowing for an additional oscillating phase, see \eqref{sec5.1}.

\subsection{Organization} The rest of the paper is concerned with the proof of Theorem \ref{main}. In section \ref{NotationsF} we introduce the main notation, define the main $Z$-norm, and state the main bootstrap proposition. In section \ref{SLSec} we prove various lemmas, such as dispersive linear bounds and some bounds on quadratic phases. In section \ref{dtv} we use the the bootstrap assumptions and elliptic analysis to derive various bounds on the unknowns and their vector-field derivatives. 

We then start the proof of the main bootstrap proposition in section \ref{enerEst}, where we obtain improved energy estimates. In section \ref{DiEs} we translate the estimate on vector-fields applied to functions into weighted bounds on the linear profiles. In the last two sections, we recover uniform control of the $Z$-norm. This involves first isolating the modification to linear scattering in section \ref{DiEs2}, where we also control the Klein-Gordon solution. Finally in section \ref{DiEs3} we control the $Z$-norm for the wave unknown.

\section{Function spaces and the main proposition}\label{NotationsF}

\subsection{Notation, atomic decomposition, and the $Z$-norm}\label{defznorm} We start by summarizing our main definitions and notations.

\subsubsection{Littlewood-Paley projections}

We fix $\varphi:\mathbb{R}\to[0,1]$ an even smooth 
function supported in $[-8/5,8/5]$ and equal to $1$ in $[-5/4,5/4]$. For simplicity of notation, we also 
let $\varphi:\mathbb{R}^3\to[0,1]$ denote the corresponding radial function on $\mathbb{R}^3$. Let
\begin{equation*}
\varphi_{k}(x):=\varphi(|x|/2^k)-\varphi(|x|/2^{k-1})\text{ for any }k\in\mathbb{Z},\qquad \varphi_I:=\sum_{m\in I\cap\mathbb{Z}}\varphi_m\text{ for any }I\subseteq\mathbb{R}.
\end{equation*}
For any $B\in\mathbb{R}$ let 
\begin{equation*}
\varphi_{\leq B}:=\varphi_{(-\infty,B]},\quad\varphi_{\geq B}:=\varphi_{[B,\infty)},\quad\varphi_{<B}:=\varphi_{(-\infty,B)},\quad \varphi_{>B}:=\varphi_{(B,\infty)}.
\end{equation*}
For any $a<b\in\mathbb{Z}$ and $j\in[a,b]\cap\mathbb{Z}$ let
\begin{equation}\label{Alx80}
\varphi^{[a,b]}_j:=
\begin{cases}
\varphi_{j}\quad&\text{ if }a<j<b,\\
\varphi_{\leq a}\quad&\text{ if }j=a,\\
\varphi_{\geq b}\quad&\text{ if }j=b.
\end{cases}
\end{equation}

For any $x\in\mathbb{Z}$ let $x^{+}=\max(x,0)$ and $x^-:=\min(x,0)$. Let
\begin{equation*}
\mathcal{J}:=\{(k,j)\in\mathbb{Z}\times\mathbb{Z}_+:\,k+j\geq 0\}.
\end{equation*}
For any $(k,j)\in\mathcal{J}$ let
\begin{equation*}
\phii^{(k)}_j(x):=
\begin{cases}
\varphi_{\leq -k}(x)\quad&\text{ if }k+j=0\text{ and }k\leq 0,\\
\varphi_{\leq 0}(x)\quad&\text{ if }j=0\text{ and }k\geq 0,\\
\varphi_j(x)\quad&\text{ if }k+j\geq 1\text{ and }j\geq 1,
\end{cases}
\end{equation*}
and notice that, for any $k\in\mathbb{Z}$ fixed, $\sum_{j\geq-\min(k,0)}\phii^{(k)}_j=1$. 

Let $P_k$, $k\in\mathbb{Z}$, denote the operator on $\mathbb{R}^2$ defined by the Fourier multiplier $\xi\to \varphi_k(\xi)$. 
Let $P_{\leq B}$ (respectively $P_{>B}$) denote the operators on $\mathbb{R}^2$ defined by the Fourier 
multipliers $\xi\to \varphi_{\leq B}(\xi)$ (respectively $\xi\to \varphi_{>B}(\xi)$). For $(k,j)\in\mathcal{J}$ let $Q_{jk}$ denote the operator
\begin{equation}\label{qjk}
(Q_{jk}f)(x):=\phii^{(k)}_j(x)\cdot P_kf(x).
\end{equation}
In view of the uncertainty principle the operators $Q_{jk}$ are relevant only when $2^j2^k\gtrsim 1$, which explains the definitions above.

\subsubsection{Linear profiles and norms} An important role will be played by the {\it{normalized solutions}} $U^{wa},U^{kg}$ and their associated {\it{profiles}} $V^{wa},V^{kg}$ defined by
\begin{equation}\label{variables4}
\begin{split}
&U^{wa}(t):=\partial_tu(t)-i\Lambda_{wa} u(t),\qquad\, U^{kg}(t):=\partial_tv(t)-i\Lambda_{kg} v(t),\\
&V^{wa}(t):=e^{it\Lambda_{wa}}U^{wa}(t),\qquad\qquad\,V^{kg}(t):=e^{it\Lambda_{kg}}U^{kg}(t),\\
\end{split}
\end{equation}
where, as before, $\Lambda_{wa}=|\nabla|$ and $\Lambda_{kg}=\sqrt{1+|\nabla|^2}$. More generally, for differential operators $\mathcal{L}\in\mathcal{V}_{N_1}$ we define $U^{wa}_{\mathcal{L}},U^{kg}_{\mathcal{L}},V^{wa}_{\mathcal{L}},V^{kg}_{\mathcal{L}}$ by
\begin{equation}\label{variables4L}
\begin{split}
&U^{wa}_{\mathcal{L}}(t):=(\partial_t-i\Lambda_{wa}) ({\mathcal{L}}u)(t),\qquad\, U^{kg}_{\mathcal{L}}(t):=(\partial_t-i\Lambda_{kg}) ({\mathcal{L}}v)(t),\\
&V^{wa}_{\mathcal{L}}(t):=e^{it\Lambda_{wa}}U^{wa}_{\mathcal{L}}(t),\qquad\qquad\,\,\,\,\, V^{kg}_{\mathcal{L}}(t):=e^{it\Lambda_{kg}}U^{kg}_{\mathcal{L}}(t).
\end{split}
\end{equation}
We define also
\begin{equation}\label{notation}
\begin{split}
&U^{wa,-}_{\mathcal{L}}:=\overline{U^{wa}_{\mathcal{L}}},\qquad U^{kg,-}_{\mathcal{L}}:=\overline{U^{kg}_{\mathcal{L}}};\qquad V^{wa,-}_{\mathcal{L}}:=\overline{V^{wa}_{\mathcal{L}}},\qquad V^{kg,-}_{\mathcal{L}}:=\overline{V^{kg}_{\mathcal{L}}},\\
&U^{wa,+}_{\mathcal{L}}:=U^{wa},\qquad U^{kg,+}:=U^{kg}_{\mathcal{L}};\qquad V^{wa,+}_{\mathcal{L}}:=V^{wa},\qquad V^{kg,+}_{\mathcal{L}}:=V^{kg}.
\end{split}
\end{equation}
The functions $\mathcal{L}u,\mathcal{L}v$ can be recovered from the normalized variables $U^{wa}_{\mathcal{L}},U^{kg}_{\mathcal{L}}$ by the formulas
\begin{equation}\label{on5}
\begin{split}
&\partial_0 (\mathcal{L}u)=(U^{wa}_{\mathcal{L}}+\overline{U^{wa}_{\mathcal{L}}})/2,\qquad \Lambda_{wa}(\mathcal{L}u)=i(U^{wa}_{\mathcal{L}}-\overline{U^{wa}_{\mathcal{L}}})/2,\\
&\partial_0 (\mathcal{L}v)=(U^{kg}_{\mathcal{L}}+\overline{U^{kg}_{\mathcal{L}}})/2,\qquad\,\, \Lambda_{kg}(\mathcal{L}v)=i(U^{kg}_{\mathcal{L}}-\overline{U^{kg}_{\mathcal{L}}})/2.
\end{split}
\end{equation}
The system \eqref{on1} gives, for any $\mathcal{L}\in\mathcal{V}_{N_1}$,
\begin{equation}\label{on6.1L}
\begin{split}
(\partial_t+i\Lambda_{wa})U^{wa}_{\mathcal{L}}&=\mathcal{N}^{wa}_{\mathcal{L}}:=\mathcal{L}[A^{\alpha\beta}\partial_\alpha v\partial_\beta v+Dv^2],\\
(\partial_t+i\Lambda_{kg})U^{kg}_{\mathcal{L}}&=\mathcal{N}^{kg}_{\mathcal{L}}:=\mathcal{L}[uB^{\alpha\beta}\partial_\alpha\partial_\beta v+Euv].
\end{split}
\end{equation}

Let
\begin{equation}\label{on9.1}
\mathcal{P}:=\{(wa,+),(wa,-),(kg,+),(kg,-)\}.
\end{equation}
Let $\Lambda_{wa,+}(\xi):=\Lambda_{wa}(\xi)=|\xi|$, $\Lambda_{wa,-}(\xi):=-\Lambda_{wa,+}(\xi)$, $\Lambda_{kg,+}(\xi):=\Lambda_{kg}(\xi)=\sqrt{|\xi|^2+1}$, $\Lambda_{kg,-}(\xi):=-\Lambda_{kg,+}(\xi)$. For any $\sigma,\mu,\nu\in\mathcal{P}$ we define the associated {\it{quadratic phase function}}
\begin{equation}\label{on9.2}
\Phi_{\sigma\mu\nu}:\mathbb{R}^3\times\mathbb{R}^3\to\mathbb{R},\qquad \Phi_{\sigma\mu\nu}(\xi,\eta):=\Lambda_{\sigma}(\xi)-\Lambda_{\mu}(\xi-\eta)-\Lambda_\nu(\eta).
\end{equation}

We are now ready to define the main norms.

\begin{definition}\label{MainZDef}
For any $x\in\mathbb{R}$ let $x^+=\max(x,0)$ and $x^-=\min(x,0)$. Let $\delta:=10^{-10}$, $\kappa:=20\sqrt{\delta}=2\times 10^{-4}$, $d=10$, and $d':=3d/2$. We define the spaces $Z_{wa},Z_{kg}$, by the norms
\begin{equation}\label{sec5}
\|f\|_{Z_{wa}}:=\sup_{k\in\mathbb{Z}}\big\{2^{(N_0-d')k^+}2^{k^-(1/2+\kappa)}\sum_{j\geq\max(0,-k)}2^j\|Q_{jk}f\|_{L^2}\big\}
\end{equation}
and
\begin{equation}\label{sec5.1}
\|f\|_{Z_{kg}}:=\sup_{k\in\mathbb{Z}}\big\{2^{(N_0-d')k^+}2^{k^-(1/2-\kappa)}\|\widehat{P_kf}\|_{L^\infty}+2^{(N(0)-2)k^+}2^{-k^--\kappa k^-}\|P_kf\|_{L^2}\big\}.
\end{equation}
\end{definition}

We remark that the norms $Z_{wa}$ and $Z_{kg}$ are applied to the profiles $V^{wa}$ and $V^{kg}$, not to the normalized solutions $U^{wa}$ and $U^{kg}$.

\subsection{The main bootstrap proposition}\label{bootstrap0} Our main result is the following proposition:

\begin{proposition}\label{bootstrap} Assume that $(u,v)$ is a solution to \eqref{on1} on some time interval $[0,T]$, $T\geq 1$, with initial data $(u_0,\dot{u}_0,v_0,\dot{v}_0)$ satisfying the assumptions \eqref{ml1}, and define $U^{wa}_{\mathcal{L}},U^{kg}_{\mathcal{L}},V^{wa}_{\mathcal{L}},V^{kg}_{\mathcal{L}}$ as before. Assume also that, for any $t\in[0,T]$, the solution satisfies the bootstrap hypothesis
\begin{equation}\label{bootstrap2.1}
\sup_{n\leq N_1,\,\mathcal{L}\in\mathcal{V}_n}\big\{\||\nabla|^{-1/2}U_\mathcal{L}^{wa}(t)\|_{H^{N(n)}}+\|U_\mathcal{L}^{kg}(t)\|_{H^{N(n)}}\big\}\leq\varep_{1}\langle t\rangle^{H(n)\delta},
\end{equation}
\begin{equation}\label{bootstrap2.2}
\begin{split}
\sup_{n\leq N_1-1,\,\mathcal{L}\in\mathcal{V}_n,\,l\in\{1,2,3\}}\sup_{k\in\mathbb{Z}}\,2^{N(n+1)k^+}\big\{&2^{k/2}\|\varphi_k(\xi)(\partial_{\xi_l}\widehat{V^{wa}_{\mathcal{L}}})(\xi,t)\|_{L^2_\xi}\\
&+2^{k^+}\|\varphi_k(\xi)(\partial_{\xi_l}\widehat{V^{kg}_{\mathcal{L}}})(\xi,t)\|_{L^2_\xi}\big\}\leq\varep_{1}\langle t\rangle^{H(n+1)\delta},
\end{split}
\end{equation}
and
\begin{equation}\label{bootstrap2.4}
\|V^{wa}(t)\|_{Z_{wa}}+\|V^{kg}(t)\|_{Z_{kg}}\leq\varep_{1},
\end{equation}
where $\langle t\rangle:=\sqrt{1+t^2}$, $\varep_1=\varep_0^{2/3}$, $\delta=10^{-10}$, $d=10$, 
\begin{equation}\label{fvc1}
H(0)=1,\qquad\qquad\,H(n)=200n-190\text{ for }n\in\{1,\ldots,N_1\},
\end{equation}
and
\begin{equation}\label{fvc1.1}
N(0)=N_0+3d,\qquad N(n)=N_0-dn\text{ for }n\in\{1,\ldots,N_1\}.
\end{equation}
Then the following improved bounds hold, for any $t\in[0,T]$,
\begin{equation}\label{bootstrap3.1}
\sup_{n\leq N_1,\,\mathcal{L}\in\mathcal{V}_n}\big\{\||\nabla|^{-1/2}U_\mathcal{L}^{wa}(t)\|_{H^{N(n)}}+\|U_\mathcal{L}^{kg}(t)\|_{H^{N(n)}}\big\}\lesssim\varep_{0}\langle t\rangle^{H(n)\delta},
\end{equation}
\begin{equation}\label{bootstrap3.2}
\begin{split}
\sup_{n\leq N_1-1,\,\mathcal{L}\in\mathcal{V}_n,\,l\in\{1,2,3\}}\sup_{k\in\mathbb{Z}}\,2^{N(n+1)k^+}\big\{&2^{k/2}\|\varphi_k(\xi)(\partial_{\xi_l}\widehat{V^{wa}_{\mathcal{L}}})(\xi,t)\|_{L^2_\xi}\\
&+2^{k^+}\|\varphi_k(\xi)(\partial_{\xi_l}\widehat{V^{kg}_{\mathcal{L}}})(\xi,t)\|_{L^2_\xi}\big\}\lesssim\varep_0\langle t\rangle^{H(n+1)\delta},
\end{split}
\end{equation}
and
\begin{equation}\label{bootstrap3.4}
\|V^{wa}(t)\|_{Z_{wa}}+\|V^{kg}(t)\|_{Z_{kg}}\lesssim\varep_0.
\end{equation}
\end{proposition}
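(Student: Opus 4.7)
The plan is to close a triple bootstrap on \eqref{bootstrap3.1}, \eqref{bootstrap3.2}, and \eqref{bootstrap3.4} in that order, running the commuted system \eqref{on6.1L} simultaneously for every $\mathcal{L}\in\mathcal{V}_{N_1}$. The three estimates are coupled in a tight hierarchy: the $Z$-norm bound \eqref{bootstrap3.4} supplies the sharp pointwise decay (optimal $\langle t\rangle^{-3/2}$ for $v$, and the $\langle t\rangle^{-1}$ interior decay of $u$ required for modified scattering); this decay feeds the energy estimate \eqref{bootstrap3.1} with slow-growth rates $H(n)\delta$; and the energy, together with the dispersive input, in turn controls the Fourier-weighted bounds \eqref{bootstrap3.2} with rates $H'(n)\delta=H(n+1)\delta$. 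The derivative loss $d$ per vector-field and the extra $3d$ derivatives reserved at the base level in $N(0)$ are precisely what allows this cascade to be run without a net regularity loss.

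For \eqref{bootstrap3.1} I would perform the standard $L^2$ energy identity on each equation in \eqref{on6.1L}. On the wave side, the $|\nabla|^{-1/2}$ weight is harmless at high frequencies and, at low output frequency $|\xi|\approx 2^{-j}$ with $j\lesssim\log t$, a Bernstein step trades the bad prefactor $2^{-j/2}$ for a $t^{1/2}$ which is absorbed by the optimal KG decay, exactly as sketched in subsection \ref{NESS}. The delicate piece is the Klein-Gordon quasilinear commutator $[\mathcal{L},uB^{\alpha\beta}\partial_\alpha\partial_\beta]v$: the symmetric principal part $uB^{\alpha\beta}\partial_\alpha\partial_\beta(\mathcal{L}v)$ is moved to the left-hand side and yields a modified energy (using $B^{00}=0$ and the pointwise decay of $\partial u$), while the genuine commutator reproduces the $\mathcal{L}u\cdot\partial^2v$ pattern discussed in the overview and closes thanks to the $|\nabla|^{-1/2}$ weight. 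Purely semilinear contributions at medium-to-high wave frequencies are treated by a time-resonance normal form based on the lower bound $|\Phi_{\sigma\mu\nu}|\gtrsim \langle\xi,\xi-\eta\rangle^{-2}|\xi|$ from \eqref{PhaseReso}.

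For \eqref{bootstrap3.2} I would differentiate the Duhamel formula for $V^{*}_{\mathcal{L}}$ in $\xi$. Each $\partial_\xi$ either lands on the input symbol (producing a bilinear object with one extra Lorentz vector-field on $u$ or $v$, controlled in $L^2$ by the $H(n+1)\delta$ energy bound, which is the source of the identity $H'(n)=H(n+1)$) or on the oscillation $e^{is\Lambda_{*}(\xi)}$, in which case the factor $s\nabla\Lambda_{*}(\xi)$ is exchanged, via a commutation relation of the type $s\nabla\Lambda_{kg}\cdot\nabla_x\sim\Gamma$, against another vector-field on the input. The weights $2^{k/2}$ and $2^{k^+}$ in \eqref{bootstrap3.2} are precisely the scalings of the $|\nabla|^{-1/2}$ and $H^{N(n)}$ norms, so the resulting bilinear estimates close with room to spare.

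The main obstacle is \eqref{bootstrap3.4} on the Klein-Gordon side, which is the whole point of introducing a $Z$-norm. Following subsection \ref{NESS}, I would conjugate $\widehat{V^{kg}}$ by the nonlinear phase
\[
\Theta(\xi,t)=\mathfrak{q}_+(\xi)\int_0^t u_{low}(s\nabla\Lambda_{kg}(\xi),s)\,ds
\]
from \eqref{ModScat}, and split the KG nonlinearity into a low-wave-frequency resonant piece — which becomes the ODE driver $i\mathfrak{q}_+(\xi)u_{low}(s\nabla\Lambda_{kg}(\xi),s)\widehat{V^{kg}}$ and is entirely absorbed into $\Theta$ — plus non-resonant remainders controlled atom by atom in the $Q_{jk}$ decomposition, using the bootstrap bounds \eqref{bootstrap2.1}--\eqref{bootstrap2.2} together with a normal form based on \eqref{PhaseReso}. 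The wave $Z$-norm \eqref{sec5} is comparatively soft, since the KG$\times$KG source for $u$ has no low-frequency KG mode and admits a genuine time-resonance normal form, yielding linear scattering \eqref{ScatB}. Throughout, the combinatorial heart of the argument is the scale $|\xi|\approx 1/t$ for $u$: the $|\nabla|^{-1/2}$ weight in \eqref{bootstrap3.1}, the low-frequency factor $2^{k^-(1+4\delta)}$ in $Z_{wa}$, and the phase $\Theta$ in $Z_{kg}$ are tuned so that this critical scale closes in all three estimates simultaneously, and any single one of these ingredients removed would break the bootstrap.
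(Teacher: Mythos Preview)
Your plan is broadly correct and matches the paper's architecture: energy estimates for \eqref{bootstrap3.1}, then the weighted profile bounds \eqref{bootstrap3.2}, then the $Z$-norm \eqref{bootstrap3.4} via the renormalization $\Theta$ on the Klein--Gordon side. The energy step and the Klein--Gordon $Z$-norm step are essentially as in the paper.

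Two points deserve correction, however.

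\textbf{The weighted bounds \eqref{bootstrap3.2}.} The paper does \emph{not} differentiate a Duhamel formula and then integrate in time. It uses a purely algebraic identity at fixed $t$ (Lemma~\ref{ident}): from $(\partial_t+i\Lambda_\mu)U=\mathcal{N}$ one gets
\[
\Lambda_\mu(\xi)\,\partial_{\xi_l}\widehat{V}(\xi,t)=e^{it\Lambda_\mu(\xi)}\big[\widehat{\Gamma_lU}(\xi,t)-i\partial_{\xi_l}\widehat{\mathcal{N}}(\xi,t)\big]-(\partial_{\xi_l}\Lambda_\mu)(\xi)\,\widehat{V}(\xi,t),
\]
so $\partial_{\xi_l}\widehat{V^{\ast}_{\mathcal L}}$ is controlled \emph{elliptically} by $U^{\ast}_{\Gamma_l\mathcal L}$ (one extra vector-field, hence the $H(n{+}1)\delta$ growth), by $x_l\mathcal{N}^{\ast}_{\mathcal L}$ (the bounds \eqref{abc30.3}, \eqref{abc31.3}), and by $V^{\ast}_{\mathcal L}$ itself. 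Your Duhamel route, as written, would face a bare factor of $s$ from $\partial_\xi e^{is\Lambda}$; the ``commutation $s\nabla\Lambda_{kg}\cdot\nabla_x\sim\Gamma$'' you invoke is exactly the content of the identity above, but it lives at fixed time, not under the time integral. The distinction matters: integrating $\|x_l\mathcal N_{\mathcal L}\|$ in $s$ would cost a full factor of $t$ that you cannot afford.

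\textbf{The wave $Z$-norm.} It is not as soft as you suggest. The normal form for the $KG\times KG\to Wave$ interaction does work, but when the $\partial_s$ of the normal form falls on a Klein--Gordon profile, the bound $\|\partial_sV^{kg}\|\lesssim\langle s\rangle^{-1}$ is \emph{borderline} and does not close for small output frequencies. The paper performs the integration by parts using the \emph{modified} profile $V^{kg}_\ast$ (see \eqref{par4}--\eqref{par7.1} and Lemma~\ref{Nor20}), so that $\partial_sV^{kg}_\ast$ has an explicit oscillatory main term plus a remainder with the crucial extra decay $\langle s\rangle^{-1.01}$. Without this, the case $\mu=(kg,+)$, $\nu=(kg,-)$ at $|\xi|\sim\langle s\rangle^{-1+O(\delta)}$ would not close (cf.\ the remark after \eqref{par31}). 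So the renormalization by $\Theta$ is needed not only for the Klein--Gordon $Z$-norm but also, indirectly, for the wave $Z$-norm.
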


We will show in Proposition \ref{IniDat} below that the hypothesis \eqref{ml1} implies that desired conclusions \eqref{bootstrap3.1}--\eqref{bootstrap3.4} at time $t=0$. Given Proposition \ref{bootstrap}, Theorem \ref{main} follows using a local existence result and a continuity argument. The rest of this paper is concerned with the proof of Proposition \ref{bootstrap}.

The bounds \eqref{bootstrap2.1} and \eqref{bootstrap3.1} provide high order energy control on the main variables $U_{\mathcal{L}}^{wa}$ and $U_{\mathcal{L}}^{kg}$. Notice that all the energy functionals are allowed to grow slowly in time. Notice also that there is a certain {\it{energy hierarchy}} expressed in terms of the parameters $H(n)$ and $N(n)$, in the sense that the variables with more vector-fields are allowed to grow slightly faster compared with those with fewer vector-fields, in weaker Sobolev spaces.

The bounds \eqref{bootstrap2.2} and \eqref{bootstrap3.2} are our main $L^2$ bounds on the derivatives of the profiles $V_{\mathcal{L}}^{wa}$ and $V_{\mathcal{L}}^{kg}$ in the Fourier space. They correspond to weighted bounds in the physical space and can be linked to the energy estimates using the key identities in Lemma \ref{ident}. 

The bounds \eqref{bootstrap2.4} and \eqref{bootstrap3.4} are our main dispersive bounds. Notice that these bounds are more precise than the Sobolev bounds, in the sense that the solutions are not allowed to grow slowly in time in the $Z$-norm, but at a lower order of derivatives and without vector-fields. To prove these dispersive bounds it is important to first renormalize the Klein-Gordon profile $V^{kg}$ and prove modified scattering.

\section{Some lemmas}\label{SLSec} In this section we collect several lemmas that are used in the rest of the paper. We start with a lemma that is used often in integration by parts arguments. See \cite[Lemma 5.4]{IoPa2} for the proof.

\begin{lemma}\label{tech5} Assume that $0<\eps\leq 1/\eps\leq K$, $N\geq 1$ is an integer, and $f,g\in C^{N+1}(\mathbb{R}^3)$. Then
\begin{equation}\label{ln1}
\Big|\int_{\mathbb{R}^3}e^{iKf}g\,dx\Big|\lesssim_N (K\eps)^{-N}\big[\sum_{|\alpha|\leq N}\eps^{|\alpha|}\|D^\alpha_xg\|_{L^1}\big],
\end{equation}
provided that $f$ is real-valued,
\begin{equation}\label{ln2}
|\nabla_x f|\geq \mathbf{1}_{{\mathrm{supp}}\,g},\quad\text{ and }\quad\|D_x^\alpha f \cdot\mathbf{1}_{{\mathrm{supp}}\,g}\|_{L^\infty}\lesssim_N\eps^{1-|\alpha|},\,2\leq |\alpha|\leq N+1.
\end{equation}
\end{lemma}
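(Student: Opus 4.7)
The plan is to prove this as a standard non-stationary phase estimate via $N$-fold integration by parts. First I would exploit the identity
\[
e^{iKf} = \frac{1}{iK\,|\nabla f|^2}\,\nabla f\cdot\nabla e^{iKf},
\]
which is valid on $\mathrm{supp}\,g$ since $|\nabla f|\geq 1$ there. Integrating by parts once gives
\[
\int_{\mathbb{R}^3} e^{iKf}\,g\,dx \;=\; \int_{\mathbb{R}^3} e^{iKf}\,L(g)\,dx,\qquad L(g):=-\frac{1}{iK}\,\nabla\cdot\!\left(\frac{g\,\nabla f}{|\nabla f|^2}\right).
\]
Iterating $N$ times reduces the problem to showing the pointwise bound
\[
\big| L^N(g)(x)\big| \;\lesssim_N\; K^{-N}\,\sum_{|\alpha|\leq N} \eps^{|\alpha|-N}\,\big|D_x^\alpha g(x)\big|\qquad\text{on }\mathrm{supp}\,g,
\]
since integrating this over $\mathbb{R}^3$ yields exactly the target bound $(K\eps)^{-N}\sum_{|\alpha|\leq N}\eps^{|\alpha|}\|D^\alpha g\|_{L^1}$.

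The core of the argument is the inductive bookkeeping of what happens when $L$ is applied. Each application of $L$ contributes one factor of $K^{-1}$, one derivative that is free to land either on $g$ or on a coefficient, and the coefficient field $\nabla f/|\nabla f|^2$. I would prove by induction on $m\leq N$ that $L^m(g)$ is a finite sum of terms of the schematic form
\[
K^{-m}\,\frac{P_{\beta}(\partial f,\partial^2 f,\dots,\partial^{m-|\alpha|+1}f)}{|\nabla f|^{2m+|\beta|}}\;D^\alpha g,
\]
where $P_\beta$ is a polynomial in the higher derivatives of $f$ of total degree $\beta$ and $|\alpha|\leq m$. Using the hypotheses $|\nabla f|\geq 1$ and $\|D^\gamma f\|_{L^\infty(\mathrm{supp}\,g)}\lesssim \eps^{1-|\gamma|}$ for $|\gamma|\geq 2$ (together with $\eps\leq 1$, so terms with fewer derivatives can only be better), each second-or-higher derivative of $f$ contributes a factor $\lesssim \eps^{-1}$. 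A careful accounting shows that the total number of $\eps^{-1}$ factors produced when $|\alpha|$ derivatives land on $g$ is exactly $m-|\alpha|$, giving the pointwise bound above with $m=N$. One small point to take care of is that differentiating $1/|\nabla f|^2$ also only produces second derivatives of $f$ (times lower-order factors with $|\nabla f|\geq 1$), so the inductive scheme closes cleanly.

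The only real obstacle is the combinatorial/inductive step tracking the degree $|\beta|$ of the coefficient polynomial and ensuring the power counting in $\eps$ works out to $m-|\alpha|$ uniformly; this is where the hypothesis $\eps\leq 1/\eps\leq K$ enters, guaranteeing in particular that $\eps^{|\alpha|-N}$ is the worst scale and no term is miscounted. Once this pointwise estimate is established, integrating in $x$ and summing over $|\alpha|\leq N$ immediately yields \eqref{ln1}. Since this lemma is a known tool (indeed, the excerpt cites \cite[Lemma 5.4]{IoPa2} for the proof), no novel idea is required, only careful induction.
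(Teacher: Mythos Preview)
Your approach is correct and is the standard non-stationary phase argument. Note that the paper does not actually prove this lemma: it simply cites \cite[Lemma~5.4]{IoPa2}, and your integration-by-parts scheme with the operator $L(g)=-\frac{1}{iK}\nabla\cdot\big(\frac{g\nabla f}{|\nabla f|^2}\big)$ and the inductive $\eps$-bookkeeping is exactly the argument one finds there.
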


To bound bilinear operators, we often use the following simple lemma.

\begin{lemma}\label{L1easy}
(i) Assume that $l\geq 2$, $f_1,\ldots, f_l,f_{l+1}\in L^2(\mathbb{R}^3)$, and $M:(\mathbb{R}^3)^l\to\mathbb{C}$ is a continuous compactly supported function. Then
\begin{equation}\label{ener62}
\begin{split}
\Big|\int_{(\mathbb{R}^3)^l}M(\xi_1,\ldots,\xi_l)\cdot\widehat{f_1}(\xi_1)\cdot\ldots\cdot \widehat{f_l}(\xi_l)\cdot\widehat{f_{l+1}}(-\xi_1-\ldots-\xi_l)\,d\xi_1\ldots d\xi_l\Big|\\
\lesssim \big\|\mathcal{F}^{-1}M\big\|_{L^1((\mathbb{R}^3)^l)}\|f_1\|_{L^{p_1}}\cdot\ldots\cdot\|f_{l+1}\|_{L^{p_{l+1}}},
\end{split}
\end{equation}
for any exponents $p_1,\ldots,p_{l+1}\in[1,\infty]$ satisfying $1/p_1+\ldots+1/p_{l+1}=1$. 

(ii) As a consequence, if $q,p_2,p_3\in[1,\infty]$ satisfy $1/p_2+1/p_3=1/q$ then
\begin{equation}\label{ener62.1}
\Big\|\mathcal{F}_{\xi}^{-1}\Big\{\int_{\mathbb{R}^3}M(\xi,\eta)\widehat{f}(\eta)\widehat{g}(-\xi-\eta)\,d\eta\Big\}\Big\|_{L^{q}}\lesssim \big\|\mathcal{F}^{-1}M\big\|_{L^1}\|f\|_{L^{p_2}}\|g\|_{L^{p_3}},
\end{equation}
\end{lemma}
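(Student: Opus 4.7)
The plan for part (i) is to express $M$ as the Fourier transform of $K := \mathcal{F}^{-1}M$, which converts the frequency-space integral into a weighted physical-space integral of products of translated copies of the $f_j$'s, to which Hölder's inequality applies directly. Part (ii) will then follow from part (i) by duality after a unimodular change of variables in frequency.

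For part (i), I would write
\[
M(\xi_1,\ldots,\xi_l)=\int_{(\mathbb{R}^3)^l} K(y_1,\ldots,y_l)\,e^{-i\sum_j \xi_j\cdot y_j}\,dy,
\]
with appropriate $(2\pi)$-normalization, and simultaneously use Fourier inversion to write
\[
\widehat{f_{l+1}}\Bigl(-\sum_j \xi_j\Bigr)=\int_{\mathbb{R}^3} f_{l+1}(x)\,e^{ix\cdot\sum_j \xi_j}\,dx.
\]
Substituting both expressions into the left-hand side of \eqref{ener62} and exchanging the order of integration (justified by the compact support of $M$), the inner $\xi_j$-integrals factor: for each $j$, $\int\widehat{f_j}(\xi_j)e^{i\xi_j\cdot(x-y_j)}d\xi_j$ is a constant multiple of $f_j(x-y_j)$ by Fourier inversion. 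Thus, up to a harmless constant, the left-hand side of \eqref{ener62} equals
\[
\int_{(\mathbb{R}^3)^l} K(y)\Bigl[\int_{\mathbb{R}^3} f_{l+1}(x)\prod_{j=1}^{l} f_j(x-y_j)\,dx\Bigr]dy.
\]
Hölder's inequality in $x$ with the given exponents $p_1,\ldots,p_{l+1}$ (which sum to $1$ in reciprocals), combined with the translation invariance of $L^p$ norms, bounds the bracketed integral uniformly in $y$ by $\|f_{l+1}\|_{L^{p_{l+1}}}\prod_j \|f_j\|_{L^{p_j}}$. Integrating in $y$ against $|K(y)|$ produces exactly the factor $\|\mathcal{F}^{-1}M\|_{L^1}$ and yields \eqref{ener62}.

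For part (ii), denote by $T(f,g)$ the expression inside $\|\cdot\|_{L^q}$ in \eqref{ener62.1}, and dualize: $\|T(f,g)\|_{L^q}=\sup_{\|h\|_{L^{q'}}=1}|\langle T(f,g),h\rangle|$. By Plancherel, the pairing $\langle T(f,g),h\rangle$ equals a constant multiple of a trilinear frequency integral of the type treated in part (i), after the linear change of variables $(\xi_1,\xi_2):=(\eta,-\xi-\eta)$. This change is unimodular, so the symbol $\widetilde M(\xi_1,\xi_2):=M(-\xi_1-\xi_2,\xi_1)$ obtained under this substitution satisfies $\|\mathcal{F}^{-1}\widetilde M\|_{L^1}=\|\mathcal{F}^{-1}M\|_{L^1}$ up to an absorbed constant. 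Part (i) applied with $(f_1,f_2,f_3)=(f,g,h)$ and $(p_1,p_2,p_3)=(p_2,p_3,q')$ (noting $1/p_2+1/p_3+1/q'=1/q+1/q'=1$) gives the bound, and taking the supremum over $h$ concludes \eqref{ener62.1}.

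There is no substantive obstacle here: the lemma is a standard multilinear Coifman–Meyer-type bound exploiting only the $L^1$-character of the Fourier inverse of the symbol. The only step demanding any care is the bookkeeping of $(2\pi)$ powers and sign conventions in the Fourier inversion, which are harmless for a $\lesssim$ statement.
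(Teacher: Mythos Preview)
Your proof is correct and follows the standard route: writing $M$ via Fourier inversion converts the multilinear frequency integral into a superposition of translated products in physical space, after which H\"older and translation invariance give (i); part (ii) follows from (i) by duality and the unimodular change of variables you describe. The paper itself does not supply a proof for this lemma, treating it as an elementary standard fact, so there is nothing to compare against beyond noting that your argument is exactly the expected one.
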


Our next lemma is often used in integration by parts in time arguments (normal forms).

\begin{lemma}\label{pha2} (i) Assume that $\Phi_{\sigma\mu\nu}$ is as in \eqref{on9.2}. If $|\xi|,|\xi-\eta|,|\eta|\in[0,b]$, $1\leq b$, then
\begin{equation}\label{pha3}
\begin{split}
&|\Phi_{\sigma\mu\nu}(\xi,\eta)|\geq |\xi|/(4b^2)\qquad \text{ if }\,\,(\sigma,\mu,\nu)=((wa,\iota),(kg,\iota_1),(kg,\iota_2)),\\
&|\Phi_{\sigma\mu\nu}(\xi,\eta)|\geq |\eta|/(4b^2)\qquad\text{ if }\,\,(\sigma,\mu,\nu)=((kg,\iota),(kg,\iota_1),(wa,\iota_2)).
\end{split}
\end{equation}

(ii) Assume that $k,k_1,k_2\in \mathbb{Z}$ and $n$ is a multiplier such that $\|\mathcal{F}^{-1}n\|_{L^1(\mathbb{R}^3\times\mathbb{R}^3)}\leq 1$. Let $\overline{k}=\max(k,k_1,k_2)$. If $(\sigma,\mu,\nu)=((wa,\iota),(kg,\iota_1),(kg,\iota_2))$ then
\begin{equation}\label{pha4}
\big\|\mathcal{F}^{-1}\{\Phi_{\sigma\mu\nu}(\xi,\eta)^{-1}n(\xi,\eta)\cdot\varphi_k(\xi)\varphi_{k_1}(\xi-\eta)\varphi_{k_2}(\eta)\}\big\|_{L^1(\mathbb{R}^3\times\mathbb{R}^3)}\lesssim 2^{-k}2^{4\overline{k}^+}.
\end{equation}
Moreover, if $(\sigma,\mu,\nu)=((kg,\iota),(kg,\iota_1),(wa,\iota_2))$ then
\begin{equation}\label{pha4.1}
\big\|\mathcal{F}^{-1}\{\Phi_{\sigma\mu\nu}(\xi,\eta)^{-1}n(\xi,\eta)\cdot\varphi_k(\xi)\varphi_{k_1}(\xi-\eta)\varphi_{k_2}(\eta)\}\big\|_{L^1(\mathbb{R}^3\times\mathbb{R}^3)}\lesssim 2^{-k_2}2^{4\overline{k}^+}.
\end{equation}
\end{lemma}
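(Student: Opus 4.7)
For part (i), my plan is to rationalize the phase by multiplying by its conjugate. Writing $m=|\xi-\eta|$, $n=|\eta|$, $p=|\xi|$, and setting $\widetilde\Phi:=\iota|\xi|+\iota_1\Lambda_{kg}(\xi-\eta)+\iota_2\Lambda_{kg}(\eta)$ in the first case, a direct computation gives
\[
\Phi_{\sigma\mu\nu}\widetilde\Phi=p^2-\bigl(\iota_1\Lambda_{kg}(m)+\iota_2\Lambda_{kg}(n)\bigr)^2=2(\xi-\eta)\cdot\eta-2-2\iota_1\iota_2\,\Lambda_{kg}(m)\Lambda_{kg}(n).
\]
When $\iota_1\iota_2=+1$, the bound $\Lambda_{kg}(m)\Lambda_{kg}(n)\geq mn\geq(\xi-\eta)\cdot\eta$ yields $|\Phi\widetilde\Phi|\geq 2$, and combined with $|\widetilde\Phi|\lesssim b$ this gives $|\Phi_{\sigma\mu\nu}|\gtrsim 1/b\geq p/b^2$. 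When $\iota_1\iota_2=-1$, I would use the identity $\Lambda_{kg}(m)\Lambda_{kg}(n)=\sqrt{(1+mn)^2+(m-n)^2}\geq 1+mn$, the reverse-triangle inequality $|m-n|\leq p$, and the relation $2(\xi-\eta)\cdot\eta=p^2-m^2-n^2$ to rearrange $\Phi\widetilde\Phi$ as $p^2$ plus a non-negative remainder and extract $\Phi\widetilde\Phi\gtrsim p^2/b^2$; since in this sub-case $|\widetilde\Phi|\leq p+|\Lambda_{kg}(m)-\Lambda_{kg}(n)|\leq 2p$, the desired bound $|\Phi_{\sigma\mu\nu}|\gtrsim p/b^2$ follows. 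For the symmetric case $(\sigma,\mu,\nu)=((kg,\iota),(kg,\iota_1),(wa,\iota_2))$, the same trick yields $\Phi\widetilde\Phi=2(\xi-\eta)\cdot\eta-2\iota_1\iota_2\Lambda_{kg}(\xi-\eta)|\eta|$, which in either sign choice is bounded below by $2n(\Lambda_{kg}(m)-m)=2n/(\Lambda_{kg}(m)+m)\gtrsim n/b$; dividing by $|\widetilde\Phi|\lesssim b$ gives $|\Phi_{\sigma\mu\nu}|\gtrsim n/b^2$.

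For part (ii), my plan is to invoke a standard symbol-calculus estimate. Using $\|\mathcal{F}^{-1}n\|_{L^1}\leq 1$ together with the submultiplicativity $\|\mathcal{F}^{-1}(fg)\|_{L^1}\leq\|\mathcal{F}^{-1}f\|_{L^1}\|\mathcal{F}^{-1}g\|_{L^1}$, it is enough to construct a smooth cutoff $\widetilde\psi(\xi,\eta)$ equal to $1$ on $\mathrm{supp}(\varphi_k(\xi)\varphi_{k_1}(\xi-\eta)\varphi_{k_2}(\eta))$ and supported slightly larger, on which the bound of part (i) remains valid, and to control
\[
\bigl\|\mathcal{F}^{-1}\{\Phi_{\sigma\mu\nu}^{-1}\widetilde\psi\}\bigr\|_{L^1(\mathbb{R}^3\times\mathbb{R}^3)}\lesssim 2^{-k}2^{4\overline{k}^+}
\]
(respectively $\lesssim 2^{-k_2}2^{4\overline{k}^+}$ for the second case). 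This should follow from the general principle that a smooth symbol $m$ on $\mathbb{R}^3\times\mathbb{R}^3$ supported in a product of balls of radii $2^{k_1},2^{k_2}$ (in the coordinates $\xi_1=\xi-\eta,\,\xi_2=\eta$) with $|\partial^{\alpha_1}_{\xi_1}\partial^{\alpha_2}_{\xi_2}m|\lesssim A\cdot 2^{-|\alpha_1|k_1}2^{-|\alpha_2|k_2}$ for sufficiently many multi-indices satisfies $\|\mathcal{F}^{-1}m\|_{L^1}\lesssim A$. Part (i) gives the zeroth-order bound $|\Phi^{-1}|\lesssim 2^{-k}2^{2\overline{k}^+}$; each further differentiation produces an additional factor $|\Phi|^{-1}|D\Phi|\lesssim 2^{-k}2^{2\overline{k}^+}$ (since $|D\Phi|=O(1)$ in these coordinates), forcing the constant $A=2^{-k}2^{4\overline{k}^+}$.

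The main obstacle will be the careful bookkeeping for part (ii) in the opposite-sign sub-case of part (i), where $\Phi_{\sigma\mu\nu}$ genuinely vanishes on $\{\xi=0\}$ and the singularity of the wave symbol $|\xi|$ coincides with the zero set of the phase. There the size bound $|\Phi|^{-1}\lesssim 2^{-k}2^{2\overline{k}^+}$ is sharp, and differentiating $\Phi^{-1}$ in $\xi$ is the delicate step because the $2^{-k}$ loss from the phase compounds with the $2^{-k}$ localization cost at the wave scale. The exponent $4\overline{k}^+$ in \eqref{pha4} is precisely the allowance for the two sources of $2^{\overline{k}^+}$ loss (the size of $|\Phi|^{-1}$ and the Leibniz-rule loss from differentiating $\Phi^{-1}$); a routine but careful derivative count in the $(\xi_1,\xi_2)$ coordinates, which separates the three frequency scales, then closes the argument.
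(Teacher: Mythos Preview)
Your argument for part (i) is essentially correct and gives an alternative route to the paper's, which instead reduces directly to the one-variable inequalities $\sqrt{1+x^2}+\sqrt{1+y^2}-(x+y)\geq 1/(2b)$ and $x+\sqrt{1+y^2}-\sqrt{1+(x+y)^2}\geq x/(4b^2)$ for $x,y,x+y\in[0,b]$. Your rationalization works; in the delicate sub-case $\iota_1\iota_2=-1$ for the wave phase, the statement ``$p^2$ plus a non-negative remainder'' is not quite accurate, but what you actually have is $\Phi\widetilde\Phi=p^2-(\Lambda_{kg}(m)-\Lambda_{kg}(n))^2=(p-|\Lambda_{kg}(m)-\Lambda_{kg}(n)|)(p+|\Lambda_{kg}(m)-\Lambda_{kg}(n)|)$, and since $|\Lambda_{kg}(m)-\Lambda_{kg}(n)|\leq|m-n|\cdot\frac{m+n}{\Lambda_{kg}(m)+\Lambda_{kg}(n)}\leq p(1-c/b^2)$, the first factor is $\gtrsim p/b^2$ and the second $\geq p$, giving $\Phi\widetilde\Phi\gtrsim p^2/b^2$ as you claim.

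Part (ii), however, has a genuine gap: the naive symbol-calculus estimate does not close. In the coordinates $(\xi_1,\xi_2)=(\xi-\eta,\eta)$ you have $\xi=\xi_1+\xi_2$, so every derivative of $\Phi^{-1}$ (in either variable) hits the wave part $|\xi_1+\xi_2|$. Concretely $|\partial^N\Phi^{-1}|\lesssim |\Phi|^{-N-1}\lesssim (2^{-k}2^{2\overline{k}^+})^{N+1}$, whereas your criterion requires $|\partial_{\xi_2}^N\Phi^{-1}|\lesssim A\,2^{-Nk_2}$. In the main regime $k\ll\min(k_1,k_2)$ (e.g.\ $k=-100$, $k_1=k_2=0$) this forces $A\gtrsim 2^{-(N+1)k}$, which for the seven derivatives needed in $\mathbb{R}^6$ is astronomically larger than the target $2^{-k}2^{4\overline{k}^+}$. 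Your final paragraph identifies the right difficulty but the proposed bookkeeping cannot absorb it: the loss is $2^{-k}$ per derivative, not $2^{\overline{k}^+}$, and it compounds.

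The paper's proof avoids this by an explicit algebraic rationalization rather than differentiation. After rescaling $\xi=2^kv$ one multiplies numerator and denominator first by $|v|\pm 2^{-k}\Lambda_{kg}(\eta-2^kv)+2^{-k}\Lambda_{kg}(\eta)$ and then by $|v|\sqrt{1+|\eta|^2}\mp v\cdot\eta$, arriving at a smooth numerator (contributing $2^{2\overline{k}^+}$) over the \emph{explicit} quadratic form $|v|^2+|v|^2|\eta|^2-(v\cdot\eta)^2$. This isolated singular factor is then handled by a thin angular decomposition in $v$ and an anisotropic change of variables, yielding the key estimate $\big\|\mathcal{F}^{-1}\big\{\varphi_0(v)\varphi_{\leq l}(\eta)\,[\,|v|^2+|v|^2|\eta|^2-(v\cdot\eta)^2\,]^{-1}\big\}\big\|_{L^1}\lesssim 2^{2l}$, which is exactly the remaining $2^{2\overline{k}^+}$. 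The point is that after rationalization the denominator no longer sees the wave scale $2^{k}$ at all, so no $2^{-k}$ losses accumulate under differentiation.
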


\begin{proof} (i) The  bounds follow from the elementary inequalities
\begin{equation}\label{par4.11}
\begin{split}
\sqrt{1+x^2}+\sqrt{1+y^2}-(x+y)&\geq 1/(2b),\\
x+\sqrt{1+y^2}-\sqrt{1+(x+y)^2}&\geq x/(4b^2),
\end{split}
\end{equation}
which hold if $x,y,x+y\in[0,b]$. The second inequality can be proved by setting $F(x):=x+\sqrt{1+y^2}-\sqrt{1+(x+y)^2}$ and noticing that $F'(x)\geq 1/(4b^2)$ as long as $y,x+y\in[0,b]$.

(ii) By symmetry, it suffices to prove \eqref{pha4}. Also, since
\begin{equation}\label{pha4.15}
\|\mathcal{F}^{-1}(fg)\|_{L^1}\lesssim \|\mathcal{F}^{-1}f\|_{L^1}\|\mathcal{F}^{-1}g\|_{L^1},
\end{equation}
without loss of generality we may assume that $n\equiv 1$ and $\iota=+$. Let
\begin{equation}\label{pha4.2}
m(v,\eta):=2^{-k}\Phi_{\sigma\mu\nu}(2^kv,\eta)^{-1}=\frac{1}{|v|-2^{-k}\Lambda_{kg,\iota_1}(\eta-2^kv)-2^{-k}\Lambda_{kg,\iota_2}(\eta)}.
\end{equation}
For \eqref{pha4} it suffices to prove that
\begin{equation}\label{pha4.3}
\big\|\mathcal{F}^{-1}\{m(v,\eta)\cdot\varphi_0(v)\varphi_{k_1}(\eta-2^kv)\varphi_{k_2}(\eta)\}\big\|_{L^1(\mathbb{R}^3\times\mathbb{R}^3)}\lesssim 2^{4\overline{k}^+}.
\end{equation}
We consider two cases, depending on the signs $\iota_1$ and $\iota_2$.

{\bf{Case 1.}} $\iota_1\neq \iota_2$. By symmetry we may assume that $\iota_2=-,\iota_1=+$, so
\begin{equation}\label{pha4.4}
\begin{split}
m(v,\eta)&=\frac{1}{|v|-2^{-k}\sqrt{1+|\eta-2^kv|^2}+2^{-k}\sqrt{1+|\eta|^2}}\\
&=\frac{2^k|v|+\sqrt{1+|\eta|^2}+\sqrt{1+|\eta-2^kv|^2}}{2(|v|\sqrt{1+|\eta|^2}+v\cdot\eta)}\\
&=\frac{\big[2^k|v|+\sqrt{1+|\eta|^2}+\sqrt{1+|\eta-2^kv|^2}\big]\big[|v|\sqrt{1+|\eta|^2}-v\cdot\eta\big]}{2[|v|^2+|v|^2|\eta|^2-(v\cdot\eta)^2]}.
\end{split}
\end{equation} 
The first identity follows by algebraic simplifications, after multiplying both the numerator and the denominator by $|v|+2^{-k}\sqrt{1+|\eta-2^kv|^2}+2^{-k}\sqrt{1+|\eta|^2}$. The second identity follows by multiplying both the numerator and the denominator by $|v|\sqrt{1+|\eta|^2}-v\cdot\eta$. The numerator in the formula above is a sum of simple products and its contribution is a factor of $2^{2\overline{k}^+}$. In view of the general bound \eqref{pha4.15}, for \eqref{pha4.3} it suffices to prove that, for $l\geq 0$
\begin{equation}\label{pha4.45}
\Big\|\int_{\mathbb{R}^3\times\mathbb{R}^3}e^{ix\cdot v}e^{iy\cdot\eta}\frac{1}{|v|^2+|v|^2|\eta|^2-(v\cdot\eta)^2}\varphi_0(v)\varphi_{\leq l}(\eta)\,dv d\eta\Big\|_{L^1_{x,y}}\lesssim 2^{2l}.
\end{equation}

We insert thin angular cutoffs in $v$, i.e. factors of the form $\varphi_{\leq -l-10}(v_2)\varphi_{\leq -l-10}(v_3)$. Due to rotation invariance it suffices to prove that
\begin{equation*}
\Big\|\int_{\mathbb{R}^3\times\mathbb{R}^3}e^{ix\cdot v}e^{iy\cdot\eta}\frac{\varphi_{\leq -l-10}(v_2)\varphi_{\leq -l-10}(v_3)}{|v|^2+|v|^2|\eta|^2-(v\cdot\eta)^2}\varphi_0(v)\varphi_{\leq l}(\eta)\,dv d\eta\Big\|_{L^1_{x,y}}\lesssim 1.
\end{equation*}
We make the changes of variables $v_1\leftrightarrow w_1,v_2\leftrightarrow 2^{-l}w_2,v_3\leftrightarrow 2^{-l}w_3$, $\eta_1\leftrightarrow 2^l\rho_1,\eta_2\leftrightarrow \rho_2,\eta_3\leftrightarrow \rho_3$. After rescaling the spatial variables appropriately, it suffices to prove that
\begin{equation}\label{pha4.5}
\begin{split}
\Big\|\int_{\mathbb{R}^3\times\mathbb{R}^3}e^{ix\cdot w}e^{iy\cdot\rho}&m'(w,\rho)\varphi_{[-4,4]}(w_1)\varphi_{\leq -10}(w_2)\varphi_{\leq -10}(w_3)\\
&\varphi_{\leq 4}(\rho_1)\varphi_{\leq l+4}(\rho_2)\varphi_{\leq l+4}(\rho_3)\,dw d\rho\Big\|_{L^1_{x,y}}\lesssim 1,
\end{split}
\end{equation}
where
\begin{equation*}
\begin{split}
m'(w,\rho):=\big\{w_1^2&(1+\rho_2^2+\rho_3^2)+\rho_1^2(w_2^2+w_3^2)\\
&+2^{-2l}(w_2^2+w_3^2+(w_2\rho_3-w_3\rho_2)^2)-2\rho_1w_1(w_2\rho_2+w_3\rho_3)\big\}^{-1}.
\end{split}
\end{equation*}
It is easy to see that $|m'(w,\rho)|\approx (1+|\rho|^2)^{-1}$ and $|D^\al_wD_\rho^\beta m'(w,\rho)|\lesssim (1+|\rho|^2)^{-1-|\beta|/2}$ in the support of the integral, for all multi-indeces $\alpha$ and $\beta$ with $|\alpha|\leq 4$, $|\beta|\leq 4$. The bound \eqref{pha4.5} follows by a standard integration by parts argument, which completes the proof of \eqref{pha4.3}.

{\bf{Case 2.}} $\iota_1=\iota_2$. If $\iota_1=\iota_2=+$ then we write, as in \eqref{pha4.4},
\begin{equation*}
\begin{split}
m(v,\eta)&=\frac{1}{|v|-2^{-k}\sqrt{1+|\eta-2^kv|^2}-2^{-k}\sqrt{1+|\eta|^2}}\\
&=\frac{-\big[2^k|v|-\sqrt{1+|\eta|^2}+\sqrt{1+|\eta-2^kv|^2}\big]\big[|v|\sqrt{1+|\eta|^2}+v\cdot\eta\big]}{2[|v|^2+|v|^2|\eta|^2-(v\cdot\eta)^2]}.
\end{split}
\end{equation*}
On the other hand, if $\iota_1=\iota_2=-$ then we write, as in \eqref{pha4.4},
\begin{equation*}
\begin{split}
m(v,\eta)&=\frac{1}{|v|+2^{-k}\sqrt{1+|\eta-2^kv|^2}+2^{-k}\sqrt{1+|\eta|^2}}\\
&=\frac{\big[2^k|v|+\sqrt{1+|\eta|^2}-\sqrt{1+|\eta-2^kv|^2}\big]\big[|v|\sqrt{1+|\eta|^2}-v\cdot\eta\big]}{2[|v|^2+|v|^2|\eta|^2-(v\cdot\eta)^2]}.
\end{split}
\end{equation*}
The desired conclusion follows in both cases using \eqref{pha4.45} and the general bound \eqref{pha4.15}. In fact, since $\|\mathcal{F}^{-1}\{\varphi_0(v)(2^k|v|\pm\sqrt{1+|\eta|^2}\mp\sqrt{1+|\eta-2^kv|^2})\}\|_{L^1(\mathbb{R}^3\times\mathbb{R}^3)}\lesssim 2^k$, we get a stronger bound when $\sigma=(wa,\iota)$ and $\mu=\nu\in\{(kg,+),(kg,-)\}$,
\begin{equation}\label{pha4.9}
\big\|\mathcal{F}^{-1}\{\Phi_{\sigma\mu\nu}(\xi,\eta)^{-1}n(\xi,\eta)\cdot\varphi_k(\xi)\varphi_{k_1}(\xi-\eta)\varphi_{k_2}(\eta)\}\big\|_{L^1(\mathbb{R}^3\times\mathbb{R}^3)}\lesssim 2^{3\overline{k}^+},
\end{equation}
as desired
\end{proof}

\subsubsection{Linear estimates}\label{line} We prove now several linear estimates. 

\begin{lemma}\label{LinEstLem} (i) For any $f\in L^2(\mathbb{R}^3)$ and $(k,j)\in\mathcal{J}$ let
\begin{equation}\label{defin}
f_{j,k}:=P'_{k}Q_{jk}f,\qquad Q_{\leq jk}f:=\sum_{j'\in[\max(-k,0),j]}Q_{j'k}f,\qquad f_{\leq j,k}:=P'_{k}Q_{\leq jk}f,
\end{equation}
where $P'_k=P_{[k-2,k+2]}$. Then, for any $\alpha\in(\mathbb{Z}_+)^3$,
\begin{equation}\label{Linfty3.4}
\|D^\alpha_\xi\widehat{f_{j,k}}\|_{L^2}\lesssim 2^{|\alpha|j}\|\widehat{Q_{jk}f}\|_{L^2},\qquad \|D^\alpha_\xi\widehat{f_{j,k}}\|_{L^\infty}\lesssim 2^{|\alpha|j}\|\widehat{Q_{jk}f}\|_{L^\infty}.
\end{equation}
Moreover we have
\begin{equation}\label{Linfty3.3}
\|\widehat{f_{j,k}}\|_{L^\infty}\lesssim \min\big\{2^{3j/2} \|Q_{jk}f\|_{L^2},2^{j/2-k}2^{\delta(j+k)/8}\|Q_{jk}f\|_{H^{0,1}_\Omega}\big\},
\end{equation}
\begin{equation}\label{Linfty3.33}
\|\widehat{f_{j,k}}(r\theta)\|_{L^2(r^2dr)L^\infty_\theta}\lesssim 2^{j+k} \|Q_{jk}f\|_{L^2},
\end{equation}
\begin{equation}\label{Linfty3.34}
\|\widehat{f_{j,k}}(r\theta)\|_{L^2(r^2dr)L^p_\theta}\lesssim_p \|Q_{jk}f\|_{H^{0,1}_\Omega},\qquad p\in[2,\infty),
\end{equation}
and
\begin{equation}\label{Linfty3.31}
\|\widehat{Q_{jk}f}-\widehat{f_{j,k}}\|_{L^\infty}\lesssim 2^{3j/2}2^{-4(j+k)} \|P_kf\|_{L^2}.
\end{equation}
In particular, if 
\begin{equation}\label{consu}
\sup_{j\geq -k^-}\|Q_{jk}f\|_{H^{0,1}_\Omega}\leq A,\qquad\sup_{j\geq -k^-}2^{j+k}\|Q_{jk}f\|_{H^{0,1}_\Omega}\leq B
\end{equation}
for some $k\in\mathbb{Z}$ and $A\leq B\in [0,\infty)$, then
\begin{equation}\label{consu2}
\|\widehat{P_k f}\|_{L^\infty}\lesssim 2^{-3k/2}A^{(1-\delta)/2}B^{(1+\delta)/2}.
\end{equation}

(ii) For any $t\in\mathbb{R}$, $(k,j)\in\mathcal{J}$, and $f\in L^2(\mathbb{R}^3)$ we have
\begin{equation}\label{Linfty1}
\|e^{-it\Lambda_{wa}}f_{j,k}\|_{L^\infty}\lesssim 2^{3k/2}\min(1,2^j\langle t\rangle^{-1})\|Q_{jk}f\|_{L^2}.
\end{equation}
Moreover, if $|t|\geq 1$ and $j\geq\max(-k,0)$, then we have the stronger bounds
\begin{equation}\label{Linfty1.5}
\|\varphi_{[-100,100]}(\langle t\rangle^{-1}x)(e^{-it\Lambda_{wa}}f_{j,k})(x)\|_{L^\infty_x}\lesssim \langle t\rangle^{-1}2^{k/2}(1+\langle t\rangle 2^{k})^{\delta/8}\|Q_{jk}f\|_{H^{0,1}_\Omega};
\end{equation}
\begin{equation}\label{Linfty1.6}
\|e^{-it\Lambda_{wa}}f_{j,k}\|_{L^\infty}\lesssim \langle t\rangle^{-1}2^{k/2}(1+\langle t\rangle 2^{k})^{\delta/8}\|Q_{jk}f\|_{H^{0,1}_\Omega}\qquad \text{ if }\,2^{j}\leq 2^{-10}\langle t\rangle;
\end{equation}
\begin{equation}\label{Linfty1.1}
\|e^{-it\Lambda_{wa}}f_{\leq j,k}\|_{L^\infty}\lesssim 2^{2k}\langle t\rangle^{-1}\|\widehat{Q_{\leq jk}f}\|_{L^\infty}\qquad \text{ if }\,2^{j}\lesssim\langle t\rangle^{1/2} 2^{-k/2}.
\end{equation}

(iii) For any $t\in\mathbb{R}$, $(k,j)\in\mathcal{J}$, and $f\in L^2(\mathbb{R}^3)$ we have
\begin{equation}\label{Linfty3}
\begin{split}
\|e^{-it\Lambda_{kg}}f_{j,k}\|_{L^\infty}\lesssim \min\big\{2^{3k/2},2^{3k^+}\langle t\rangle^{-3/2}2^{3j/2}\big\}\|Q_{jk}f\|_{L^2}.
\end{split}
\end{equation}
Moreover, if $1\leq 2^{2k^--20}\langle t\rangle$ and $j\geq\max(-k,0)$, then we have the stronger bounds
\begin{equation}\label{Linfty3.6}
\|e^{-it\Lambda_{kg}}f_{j,k}\|_{L^\infty}\lesssim 2^{5k^+}\langle t\rangle^{-3/2}2^{j/2-k^-}(\langle t\rangle 2^{2k^-})^{\delta/8}\|Q_{jk}f\|_{H^{0,1}_\Omega}\qquad \text{ if }\,2^{j}\leq 2^{k^--20}\langle t\rangle;
\end{equation}
\begin{equation}\label{Linfty3.1}
\|e^{-it\Lambda_{kg}}f_{\leq j,k}\|_{L^\infty}\lesssim 2^{5k^+}\langle t\rangle^{-3/2}\|\widehat{Q_{\leq jk}f}\|_{L^\infty}\qquad \text{ if }\,2^{j}\lesssim\langle t\rangle^{1/2}.
\end{equation}

(iv) The bounds \eqref{Linfty1.6}, \eqref{Linfty1.1}, and \eqref{Linfty3.6} can be improved by using super-localization in frequency. Indeed, for $n\geq 4$ and $l\in\mathbb{Z}$ we define the operators $\mathcal{C}_{n,l}$ by
\begin{equation}\label{suploc}
\widehat{\mathcal{C}_{n,l}g}(\xi):=\chi(|\xi|2^{-l}-n)\widehat{g}(\xi),
\end{equation}
where $\chi:\mathbb{R}\to[0,1]$ is a smooth function supported in $[-2,2]$ with the property that $\sum_{n\in\mathbb{Z}}\chi(x-n)=1$ for all $x\in\mathbb{R}$. Assume that $|t|\geq 1$, $j\geq\max(-k,0)$, and $l\leq k-6$.  Then
\begin{equation}\label{Linfty1.6*}
\Big\{\sum_{n\geq 4}\|e^{-it\Lambda_{wa}}\mathcal{C}_{n,l}f_{j,k}\|_{L^\infty}^2\Big\}^{1/2}\lesssim \langle t\rangle^{-1}2^{l/2}(1+\langle t\rangle 2^{k})^{\delta/8}\|Q_{jk}f\|_{H^{0,1}_\Omega}
\end{equation}
provided that $2^{j}+2^{-l}\lesssim \langle t\rangle(1+\langle t\rangle 2^{k})^{-\delta/8}$. Moreover, if $2^{j}+2^{-l}\lesssim\langle t\rangle^{1/2} 2^{-k/2}$ then
\begin{equation}\label{Linfty1.1*}
\sup_{n\geq 4}\|e^{-it\Lambda_{wa}}\mathcal{C}_{n,l}f_{\leq j,k}\|_{L^\infty}\lesssim 2^{k}2^l\langle t\rangle^{-1}\|\widehat{Q_{\leq jk}f}\|_{L^\infty}.
\end{equation}
Finally, if $2^{j}+2^{-l}\lesssim \langle t\rangle 2^{k^-}(1+\langle t\rangle 2^{2k^-})^{-\delta/8}$ then
\begin{equation}\label{Linfty3.6*}
\Big\{\sum_{n\geq 4}\|e^{-it\Lambda_{kg}}\mathcal{C}_{n,l}f_{j,k}\|_{L^\infty}^2\Big\}^{1/2}\lesssim 2^{5k^+}\langle t\rangle^{-1}2^{l/2}2^{-k^-}(1+\langle t\rangle 2^{2k^-})^{\delta/8}\|Q_{jk}f\|_{H^{0,1}_\Omega}.
\end{equation}
\end{lemma}

The super-localized bounds in (iv) are not being used in this paper. We include them here for future reference, as they can be proved in the same way as the bounds in (ii) and (iii).

\begin{proof} (i) The bound \eqref{Linfty3.4} follows from definitions, since every $\xi$ derivative corresponds to multiplication by $x$ in the physical space. Similarly,
\begin{equation*}
\|\widehat{f_{j,k}}\|_{L^\infty}\lesssim \|\widehat{Q_{jk}f}\ast \widehat{\varphi_{\leq j+4}}\|_{L^\infty}\lesssim 2^{3j/2}\|\widehat{Q_{jk}f}\|_{L^2},
\end{equation*}
which gives the first inequality in \eqref{Linfty3.3}. A similar argument also gives \eqref{Linfty3.31}. 

Using the Sobolev embedding along the spheres $\mathbb{S}^2$, for any $g\in H^{0,1}_\Omega$ and $p\in[2,\infty)$ we have
\begin{equation}\label{triv8}
\big\Vert \widehat{g}(r\theta)\,\big\Vert_{L^2(r^2dr)L^p_\theta}
\lesssim_p \sum_{m_1+m_2+m_3\leq 1}\Vert \Omega_{23}^{m_1}\Omega_{31}^{m_2}\Omega_{12}^{m_3}\widehat{g}\Vert_{L^2}\lesssim_p \Vert \widehat{g}\Vert_{H^{0,1}_\Omega}.
\end{equation}
This gives \eqref{Linfty3.34}. Moreover, for $\xi\in\mathbb{R}^3$ with $|\xi|\approx 2^k$ we estimate
\begin{equation*}
\begin{split}
|\widehat{f_{j,k}}(\xi)|&\lesssim \int_{\mathbb{R}^3}|\widehat{Q_{jk}f}(r\theta)||\widehat{\varphi_{\leq j+4}}(\xi-r\theta)| r^2drd\theta\\
&\lesssim \|\widehat{Q_{jk}f}(r\theta)\|_{L^2(r^2dr)L^p_\theta}\|2^{3j}(1+2^j|\xi-r\theta|)^{-8}\|_{L^2(r^2dr)L^{p'}_\theta}\\
&\lesssim_p \|\widehat{Q_{jk}f}\|_{H^{0,1}_\Omega}\cdot 2^{3j}2^{-j/2}2^k2^{-2(j+k)/p'}.
\end{split}
\end{equation*}
The second bound in \eqref{Linfty3.3} follows. The proof of \eqref{Linfty3.33} is similar.

To prove \eqref{consu2} we use \eqref{Linfty3.3} to estimate
\begin{equation*}
\sum_{j\geq -k^-}\|\widehat{f_{j,k}}\|_{L^\infty}\lesssim 2^{-3k/2}\sum_{j\geq -k^-}2^{(j+k)/2}2^{\delta (j+k)/8}\|Q_{jk}f\|_{H^{0,1}_\Omega}\lesssim 2^{-3k/2}A^{(1-\delta)/2}B^{(1+\delta)/2},
\end{equation*}
and the desired conclusion follows.

We prove the remaining bounds \eqref{Linfty1}--\eqref{Linfty3.6*} in several steps.

{\bf{Step 1: proof of \eqref{Linfty1.6*} and \eqref{Linfty1.1*}.}} Let
\begin{equation}\label{bar1}
f_{j,k}^\ast:=Q_{jk}f,\quad f_{j,k;n}:=\mathcal{C}_{n,l}f_{j,k},\quad\widehat{g_{j,k;n}}(\xi):=\widehat{f_{j,k}^\ast}(\xi)\varphi_{\leq 4}(2^{-l}|\xi|-n).
\end{equation}
By orthogonality,
\begin{equation*}
\Big\{\sum_{n\geq 4}\|g_{j,k;n}\|_{H^{0,1}_\Omega}^2\Big\}^{1/2}\lesssim \|f^\ast_{j,k}\|_{H^{0,1}_\Omega}.
\end{equation*}
For \eqref{Linfty1.6*} it suffices to prove that, for any $n\geq 4$ and $x\in\mathbb{R}^3$,
\begin{equation}\label{bar2}
\Big|\int_{\mathbb{R}^3}e^{-it|\xi|}e^{ix\cdot\xi}\widehat{g_{j,k;n}}(\xi)\varphi_{[k-2,k+2]}(\xi)\chi(|\xi|2^{-l}-n)\,d\xi\Big|\lesssim \langle t\rangle^{-1}2^{l/2}(\langle t\rangle 2^{k})^{\delta/8}\|g_{j,k;n}\|_{H^{0,1}_\Omega}.
\end{equation}

This follows easily if $2^k\langle t\rangle\lesssim 1$. Recall that $2^{j}+2^{-l}\lesssim \langle t\rangle(1+\langle t\rangle 2^{k})^{-\delta/8}$ and $k\geq l+6$. The bounds \eqref{bar2} also follow directly from Lemma \ref{tech5} (integration by parts in $\xi$) if $|x|\notin[2^{-40}\langle t\rangle,2^{40}\langle t\rangle]$. 

It remains to prove \eqref{bar2} when
\begin{equation}\label{bar4}
2^k\langle t\rangle\geq 2^{50},\qquad |x|\in[2^{-50}\langle t\rangle,2^{50}\langle t\rangle].
\end{equation}
By rotation invariance we may assume $x=(x_1,0,0)$. We decompose $e^{-it\Lambda_{wa}}f_{j,k;n}(x)=\sum_{b,c\geq 0}J_{b,c}$, where
\begin{equation}\label{tric6}
\begin{split}
&J_{b,c}:=C\int_{\mathbb{R}^3}\widehat{g_{j,k;n}}(\xi)\varphi_{[k-2,k+2]}(\xi)\chi(|\xi|2^{-l}-n)e^{ix_1\xi_1-it|\xi|}\psi_{b,c}(\xi)\,d\xi,\\
&\psi_{b,c}(\xi):=\varphi_b^{[0,\infty)}(\xi_2/2^{\lambda})\varphi_c^{[0,\infty)}(\xi_3/2^{\lambda}),\qquad 2^{\lambda}:=\langle t\rangle^{-1/2}2^{k/2}.
\end{split}
\end{equation}

We estimate first $|J_{0,0}|$. For any $p\in[2,\infty)$, using also \eqref{triv8} we have
\begin{equation}\label{tric9.2}
|J_{0,0}|\lesssim \|\widehat{g_{j,k;n}}(r\theta)\|_{L^2(r^2dr)L^p_\theta}(2^{\lambda-k})^{2/p'}\cdot 2^{k}2^{l/2}\lesssim_p \|g_{j,k;n}\|_{H^{0,1}_\Omega}\cdot \langle t\rangle^{-1}2^{l/2}(\langle t\rangle 2^{k})^{1/p}.
\end{equation} 
This is consistent with the desired bound \eqref{bar2}, by taking $p$ large enough.

To estimate $|J_{b,c}|$ when $(b,c)\neq (0,0)$ we may assume without loss of generality that $b\geq c$. It suffices to show that if $b\geq\max(c,1)$ then
\begin{equation}\label{tric9}
|J_{b,c}|\lesssim \langle t\rangle^{-1}2^{l/2}(\langle t\rangle 2^{k})^{\delta/40}\|g_{j,k;n}\|_{H^{0,1}_\Omega}.
\end{equation}

We integrate by parts in the integral in \eqref{tric6}, up to three times, using the rotation vector-field $\Omega_{12}=\xi_1\partial_{\xi_2}-\xi_2\partial_{\xi_1}$. Since $\Omega_{12}\{x_1\xi_1-t|\xi|\}=-\xi_2x_1$, every integration by parts gains a factor of $|t|2^{\lambda+b}\approx \langle t\rangle^{1/2}2^{k/2+b}$ and loses a factor $\lesssim \langle t\rangle^{1/2}2^{k/2}$. If $\Omega_{12}$ hits the function $\widehat{g_{j,k;n}}$ then we stop integrating by parts and bound the integral by estimating $\Omega_{12}\widehat{g_{j,k;n}}$ in $L^2$. As in \eqref{tric9.2} it follows that
\begin{equation*}
|J_{b,c}|\lesssim \|\widehat{g_{j,k;n}}(r\theta)\|_{L^2(r^2dr)L^p_\theta}(2^{\lambda-k})^{2/p'}2^{k}2^{l/2}2^{-b}+\|\Omega_{12}\widehat{g_{j,k;n}}\|_{L^2}(2^{\lambda+b}2^{l/2})(\langle t\rangle^{1/2}2^{k/2+b})^{-1},
\end{equation*}
which gives the desired bound \eqref{tric9}. This completes the proof of the main bounds \eqref{bar2}.

The proof of \eqref{Linfty1.1*} is easier. We define $f_{\leq j,k;n}:=\mathcal{C}_{n,l}f_{\leq j,k}$ and $f_{\leq j,k}^\ast:=Q_{\leq jk}f$. For \eqref{Linfty1.1*} it suffices to prove that, for any $n\geq 4$ and $x\in\mathbb{R}^3$,
\begin{equation}\label{bar7}
\Big|\int_{\mathbb{R}^3}e^{-it|\xi|}e^{ix\cdot\xi}\widehat{f^\ast_{\leq j,k}}(\xi)\varphi_{[k-2,k+2]}(\xi)\chi(|\xi|2^{-l}-n)\,d\xi\Big|\lesssim 2^{k}2^l\langle t\rangle^{-1}\|\widehat{f^\ast_{\leq j,k}}\|_{L^\infty}.
\end{equation}
As before, we may assume $x=(x_1,0,0)$ and decompose $e^{-it\Lambda_{wa}}f_{\leq j,k;n}(x)=\sum_{b,c\geq 0}J'_{b,c}$, where
\begin{equation}\label{bar7.1}
\begin{split}
&J'_{b,c}:=C\int_{\mathbb{R}^3}\widehat{f^\ast_{\leq j,k}}(\xi)\varphi_{[k-2,k+2]}(\xi)\chi(|\xi|2^{-l}-n)e^{ix_1\xi_1-it|\xi|}\psi_{b,c}(\xi)\,d\xi,\\
&\psi_{b,c}(\xi):=\varphi_b^{[0,\infty)}(\xi_2/2^{\lambda})\varphi_c^{[0,\infty)}(\xi_3/2^{\lambda}),\qquad 2^{\lambda}:=\langle t\rangle^{-1/2}2^{k/2}.
\end{split}
\end{equation}
Using polar coordinates, it is easy to see that $|J'_{0,0}|\lesssim 2^{k}2^l\langle t\rangle^{-1}\|\widehat{f^\ast_{\leq j,k}}\|_{L^\infty}$. Then we integrate by parts in $\xi_2$ or $\xi_3$ (using the assumption $2^j+2^{-l}\lesssim \langle t\rangle 2^{\lambda}2^{-k}$) to show that 
\begin{equation*}
|J'_{b,c}|\lesssim 2^{-\max(b,c)}2^{k}2^l\langle t\rangle^{-1}\|\widehat{f^\ast_{\leq j,k}}\|_{L^\infty}
\end{equation*}
for any $b,c\geq 0$. The desired conclusion \eqref{bar7} follows.

{\bf{Step 2: proof of \eqref{Linfty1} and \eqref{Linfty1.5}.}} We start with \eqref{Linfty1.5}.  By rotation invariance we may assume $x=(x_1,0,0)$, $|x_1|\approx\langle t\rangle$. We may also assume that $2^k\langle t\rangle\geq 2^{40}$. As before we decompose $e^{-it\Lambda_{wa}}f_{j,k}(x)=\sum_{b,c\geq 0}J''_{b,c}$, where
\begin{equation}\label{tris6}
\begin{split}
&J''_{b,c}:=\int_{\mathbb{R}^3}\widehat{f_{j,k}}(\xi)\varphi_{[k-4,k+4]}(\xi)e^{ix_1\xi_1-it|\xi|}\psi_{b,c}(\xi)\,d\xi,\\
&\psi_{b,c}(\xi):=\varphi_b^{[0,\infty)}(\xi_2/2^{\lambda})\varphi_c^{[0,\infty)}(\xi_3/2^{\lambda}),\qquad 2^{\lambda}:=\langle t\rangle^{-1/2}2^{k/2}.
\end{split}
\end{equation}
 This is similar to the decomposition \eqref{tric6} with $l=k-6$, once we notice that super-localization is not important if $2^l\approx 2^k$. As in \eqref{tric9.2} and \eqref{tric9}, we have
\begin{equation*}
|J''_{0,0}|\lesssim_p \|f_{j,k}\|_{H^{0,1}_\Omega}\cdot \langle t\rangle^{-1}2^{k/2}(\langle t\rangle 2^{k})^{\delta/10},
\end{equation*} 
and, if $b\geq\max(c,1)$,
\begin{equation*}
|J''_{b,c}|\lesssim \langle t\rangle^{-1}2^{k/2}(\langle t\rangle 2^{k})^{\delta/10}\|f^\ast_{j,k}\|_{H^{0,1}_\Omega}.
\end{equation*}
The proof of this second bound uses integration by parts with the rotation vector-field $\Omega_{12}=\xi_1\partial_{\xi_2}-\xi_2\partial_{\xi_1}$, and relies on the assumption $|x_1|\approx\langle t\rangle$. The desired conclusion \eqref{Linfty1.5} follows from these two bounds.

The bounds \eqref{Linfty1} follow by the same argument, using the decomposition \eqref{tris6}, but using \eqref{Linfty3.33} instead of \eqref{Linfty3.34} in the estimate of $|J_{0,0}|$. Also, integration by parts in $\xi_2$ or $\xi_3$ is used to bound $|J_{b,c}|$ when $2^{\lambda+\max(b,c)}\gtrsim 2^{j+k}\langle t\rangle^{-1}$. 

{\bf{Step 3: proof of \eqref{Linfty1.6} and \eqref{Linfty1.1}.}} The bounds \eqref{Linfty1.1} follow directly from \eqref{Linfty1.1*} by taking $2^l\approx 2^k$. To prove \eqref{Linfty1.6} we may assume that $x=(x_1,0,0)$ and $\langle t\rangle 2^{k}\geq 2^{40}$. If $|x_1|\in [2^{-10}|t|,2^{10}|t|]$ then the desired bounds follow from \eqref{Linfty1.5}. On the other hand, if $|x_1|\leq 2^{-10}|t|$ or $|x_1|\geq 2^{10}|t|$ then we write
\begin{equation}\label{tris2.2}
[e^{-it\Lambda_{wa}}f_{j,k}](x)=C\int_{\mathbb{R}^3\times\mathbb{R}^3}Q_{jk}f(y)e^{-iy\cdot\xi}e^{ix_1\xi_1}e^{-it|\xi|}\varphi_{[k-2,k+2]}(\xi)\,d\xi dy.
\end{equation}
Here we use the fact that $2^j\leq\langle t\rangle 2^{-10}$ and integrate by parts in $\xi$ sufficiently many times (using Lemma \ref{tech5}) to see that 
\begin{equation*}
|e^{-it\Lambda_{wa}}f_{j,k}(x)|\lesssim (\langle t\rangle 2^{k})^{-4}2^{3k}2^{3j/2}\|Q_{jk}f\|_{L^2}\lesssim (\langle t\rangle 2^{k})^{-4}2^{3k}\langle t\rangle^{3/2}\|Q_{jk}f\|_{L^2},
\end{equation*}
which is better than what we need.  

{\bf{Step 4: proof of \eqref{Linfty3.6*}.}} This is similar to the proof of \eqref{Linfty1.6*}. It suffices to show that for any $n\geq 4$ and $x\in\mathbb{R}^3$,
\begin{equation}\label{lbar2}
\begin{split}
\Big|\int_{\mathbb{R}^3}e^{-it\langle\xi\rangle}e^{ix\cdot\xi}&\widehat{g_{j,k;n}}(\xi)\varphi_{[k-2,k+2]}(\xi)\chi(|\xi|2^{-l}-n)\,d\xi\Big|\\
&\lesssim 2^{5k^+}\langle t\rangle^{-1}2^{l/2}2^{-k^-}(1+\langle t\rangle 2^{2k^-})^{\delta/8}\|g_{j,k;n}\|_{H^{0,1}_\Omega}.
\end{split}
\end{equation}
This follows easily if $2^{2k^-}\langle t\rangle\lesssim 1$. Recall that $2^{j}+2^{-l}\lesssim \langle t\rangle 2^{k^-}(1+\langle t\rangle 2^{2k^-})^{-\delta/8}$ and $k\geq l+6$. The bounds \eqref{lbar2} also follow directly from Lemma \ref{tech5} if $|x|\notin[2^{-40}2^{k^-}\langle t\rangle,2^{40}2^{k^-}\langle t\rangle]$. 

It remains to prove \eqref{lbar2} when
\begin{equation}\label{lbar4}
2^{2k^-}\langle t\rangle\geq 2^{50},\qquad |x|\in[2^{-50}2^{k^-}\langle t\rangle,2^{50}2^{k^-}\langle t\rangle].
\end{equation}
We may assume $x=(x_1,0,0)$ and decompose $e^{-it\Lambda_{kg}}f_{j,k;n}(x)=\sum_{b,c\geq 0}J'''_{b,c}$, where
\begin{equation}\label{ltric6}
\begin{split}
&J'''_{b,c}:=C\int_{\mathbb{R}^3}\widehat{g_{j,k;n}}(\xi)\varphi_{[k-2,k+2]}(\xi)\chi(|\xi|2^{-l}-n)e^{ix_1\xi_1-it\langle\xi\rangle}\psi'_{b,c}(\xi)\,d\xi,\\
&\psi'_{b,c}(\xi):=\varphi_b^{[0,\infty)}(\xi_2/2^{\lambda'})\varphi_c^{[0,\infty)}(\xi_3/2^{\lambda'}),\qquad 2^{\lambda'}:=\langle t\rangle^{-1/2}2^{k^+}.
\end{split}
\end{equation}
As in the proof of \eqref{Linfty1.6*}, we estimate first $|J'''_{0,0}|$, using \eqref{triv8}. Thus, for any $p\in[2,\infty)$,
\begin{equation*}
|J'''_{0,0}|\lesssim \|\widehat{g_{j,k;n}}(r\theta)\|_{L^2(r^2dr)L^p_\theta}(2^{\lambda'-k})^{2/p'}2^{k}2^{l/2}\lesssim_p \|g_{j,k;n}\|_{H^{0,1}_\Omega}\cdot 2^{5k^+}2^{-k^-}\langle t\rangle^{-1}2^{l/2}(\langle t\rangle 2^{2k^-})^{1/p}.
\end{equation*} 
Moreover, if $b\geq\max(c,1)$ then we show that
\begin{equation}\label{ltric9}
|J'''_{b,c}|\lesssim 2^{5k^+}\langle t\rangle^{-1}2^{l/2}2^{-k^-}(\langle t\rangle 2^{2k^-})^{\delta/10}\|g_{j,k;n}\|_{H^{0,1}_\Omega}.
\end{equation}
These two bounds clearly suffice to prove \eqref{lbar2}.

To prove \eqref{ltric9} we integrate by parts in the integral in \eqref{ltric6}, up to three times, using the rotation vector-field $\Omega_{12}=\xi_1\partial_{\xi_2}-\xi_2\partial_{\xi_1}$. Since $\Omega_{12}\{x_1\xi_1-t\langle\xi\rangle\}=-\xi_2x_1$, every integration by parts gains a factor of $2^{k^-}|t|2^{\lambda'+b}\approx \langle t\rangle^{1/2}2^{k+b}$ (see \eqref{lbar4}) and loses a factor $\lesssim \langle t\rangle^{1/2}2^{k}$. If $\Omega_{12}$ hits the function $\widehat{g_{j,k;n}}$ then we stop integrating by parts and bound the integral by estimating $\Omega_{12}\widehat{g_{j,k;n}}$ in $L^2$. As before it follows that
\begin{equation*}
|J'''_{b,c}|\lesssim \|\widehat{g_{j,k;n}}(r\theta)\|_{L^2(r^2dr)L^p_\theta}(2^{\lambda'-k})^{2/p'}2^{k}2^{l/2}2^{-b}+\|\Omega_{12}\widehat{g_{j,k;n}}\|_{L^2}(2^{\lambda'+b}2^{l/2})(\langle t\rangle^{1/2}2^{k+b})^{-1},
\end{equation*}
which gives the desired bound \eqref{ltric9}. This completes the proof of the main bounds \eqref{lbar2}.

{\bf{Step 5: proof of \eqref{Linfty3}--\eqref{Linfty3.1}.}} Clearly $\|e^{-it\Lambda_{kg}}f_{j,k}\|_{L^\infty}\lesssim \|\widehat{f_{j,k}}\|_{L^1}\lesssim 2^{3k/2}\|f_{j,k}\|_{L^2}$. Moreover, the standard dispersive bounds
\begin{equation*}
\|e^{-it\Lambda_{kg}}P_{\leq k}\|_{L^1\to L^\infty}\lesssim (1+|t|)^{-3/2}2^{3k^+}
\end{equation*}
can then be used to prove \eqref{Linfty3}, i.e.
\begin{equation*}
\|e^{-it\Lambda_{kg}}f_{j,k}\|_{L^\infty}\lesssim (1+|t|)^{-3/2}2^{3k^+}\|Q_{jk}f\|_{L^1}\lesssim (1+|t|)^{-3/2}2^{3k^+}2^{3j/2}\|Q_{jk}f\|_{L^2}.
\end{equation*}

To prove \eqref{Linfty3.6} we consider first the harder case $2^j\geq \langle t\rangle^{1/2}$. By rotation invariance we may assume $x=(x_1,0,0)$, $x_1/t>0$. We may also assume that $2^{j+k}\geq 2^{3k^++10}$ (otherwise the desired conclusion follows from \eqref{Linfty3}) and $\langle t\rangle2^{-3k^+}\gg 1$. If $|x_1|\leq 2^{-100}|t|2^{k^-}$ or $|x_1|\geq 2^{100}|t|2^{k^-}$ then we write
\begin{equation}\label{triv2.2}
[e^{-it\Lambda_{kg}}f_{j,k}](x)=C\int_{\mathbb{R}^3\times\mathbb{R}^3}Q_{jk}f(y)e^{-iy\cdot\xi}e^{ix_1\xi_1}e^{-it\sqrt{|\xi|^2+1}}\varphi_{[k-2,k+2]}(\xi)\,d\xi dy.
\end{equation}
We integrate by parts in $\xi$ sufficiently many times (using Lemma \ref{tech5} and recalling that $|y|\leq 2^{j+1}\leq 2^{k^--19}\langle t\rangle$) to see that 
\begin{equation*}
|e^{-it\Lambda_{kg}}f_{j,k}(x)|\lesssim (\langle t\rangle 2^{2k^-})^{-4}2^{3k}2^{3j/2}\|Q_{jk}f\|_{L^2}.
\end{equation*}
This is better than what we need. 

It remains to consider the main case $|x_1|\approx |t|2^{k^-}$. Let $\rho\in(0,\infty)$ denote the unique number with the property that $t\rho/\sqrt{\rho^2+1}=x_1$, such that $(\rho,0,0)$ is the stationary point of the phase $\xi\to x_1\xi_1-t\sqrt{|\xi|^2+1}$ and $\rho\gtrsim 2^{k^-}$. Using integration by parts (Lemma \ref{tech5}), we may assume that $\xi_1,\xi_2,\xi_3$ are restricted to $|\xi_2|,|\xi_3|\leq 2^{k-10}$ and $\xi_1\in [2^{k-10},2^{k+10}]$ (for the other contributions we can use the formula \eqref{triv2.2} and get stronger bounds as before). Then we let
\begin{equation}\label{triv6}
\begin{split}
&J_{a,b,c}:=\int_{\mathbb{R}^3}\widehat{f_{j,k}}(\xi)\varphi_{[k-4,k+4]}(\xi)\mathbf{1}_{+}(\xi_1)\varphi_{\leq k-9}(\xi_2)\varphi_{\leq k-9}(\xi_3) e^{ix_1\xi_1-it\sqrt{|\xi|^2+1}}\psi_{a,b,c}(\xi)\,d\xi,\\
&\psi_{a,b,c}(\xi):=\varphi_a^{[0,\infty)}((\xi_1-\rho)/2^{\lambda_1})\varphi_b^{[0,\infty)}(\xi_2/2^{\lambda_2})\varphi_c^{[0,\infty)}(\xi_3/2^{\lambda_2}),
\end{split}
\end{equation} 
where, for some sufficiently large constant $C$,
\begin{equation}\label{triv7}
2^{\lambda_1}:=2^j\langle t\rangle^{-1}2^{3k^++C}(\langle t\rangle 2^{2k^-})^{\delta/20},\qquad 2^{\lambda_2}:=\langle t\rangle^{-1/2}2^{k^+}.
\end{equation} 
Compared to the earlier decompositions, such as \eqref{tric6}, we notice that we insert an additional decomposition in the variable $\xi_1$ around the stationary point $(\rho,0,0)$.

Recall that $2^j\geq\langle t\rangle^{1/2}$. We estimate first $|J_{0,0,0}|$, using \eqref{Linfty3.34}, for any $p\in[2,\infty)$,
\begin{equation}\label{triv9.2}
\begin{split}
|J_{0,0,0}|&\lesssim_p \|\widehat{f_{j,k}}(r\theta)\|_{L^2(r^2dr)L^p_\theta}(2^{\lambda_2-k})^{2/p'}2^k2^{\lambda_1/2}\\
&\lesssim_p \|f_{j,k}\|_{H^{0,1}_\Omega}\langle t\rangle^{-3/2}2^{j/2}2^{-k^-}2^{4k^+}(\langle t\rangle 2^{2k^-})^{1/p+\delta/20}.
\end{split}
\end{equation} 
This is consistent with the desired bound \eqref{Linfty3.6}, by taking $p$ large enough.

To estimate $|J_{a,b,c}|$ when $(a,b,c)\neq (0,0,0)$ we may assume without loss of generality that $b\geq c$. If $2^{\lambda_2+b}\geq 2^j\langle t\rangle^{-1}2^{k^+}(\langle t\rangle 2^{2k^-})^{\delta/40}$ then we integrate by parts in $\xi_2$ many times, using Lemma \ref{tech5}, to show that
\begin{equation*}
|J_{a,b,c}|\lesssim \|f_{j,k}\|_{L^2}(\langle t\rangle 2^{2k^-})^{-4}2^{3k/2},
\end{equation*}
which is better than what we need. This bound also holds, using integration by parts in $\xi_1$, if $2^{\lambda_2+b}\leq 2^j\langle t\rangle^{-1}2^{k^+}(\langle t\rangle 2^{2k^-})^{\delta/40}$ and $a\geq 1$. It remains to prove that
\begin{equation}\label{triv9}
|J_{0,b,c}|\lesssim 2^{5k^+}\langle t\rangle^{-3/2}2^{j/2-k^-}(\langle t\rangle 2^{2k^-})^{\delta/10}\|Q_{jk}f\|_{H^{0,1}_\Omega}
\end{equation}
provided that
\begin{equation}\label{triv9.1}
b\geq \max(c,1)\qquad\text{ and }\qquad 2^{\lambda_2+b}\leq 2^j\langle t\rangle^{-1}2^{k^+}(\langle t\rangle 2^{2k^-})^{\delta/40}.
\end{equation}

To prove \eqref{triv9} we integrate by parts in \eqref{triv6}, up to three times, using the rotation vector-field $\Omega_{12}=\xi_1\partial_{\xi_2}-\xi_2\partial_{\xi_1}$. Since $\Omega_{12}\{x_1\xi_1-t\sqrt{|\xi|^2+1}\}=-\xi_2x_1$, every integration by parts gains a factor of $|t|2^{k^-}2^{\lambda_2+b}\approx \langle t\rangle^{1/2}2^{k+b}$ and loses a factor $\lesssim \langle t\rangle^{1/2}2^{k}$. If $\Omega_{12}$ hits the function $\widehat{f_{j,k}}$ then we stop integrating by parts and bound the integral by estimating $\Omega_{12}\widehat{f_{j,k}}$ in $L^2$. As in \eqref{triv9.2} it follows that
\begin{equation*}
|J_{0,b,c}|\lesssim_p \|\widehat{f_{j,k}}(r\theta)\|_{L^2(r^2dr)L^p_\theta}(2^{\lambda_2-k})^{2/p'}2^k2^{\lambda_1/2}2^{-b}+\|\Omega_{12}\widehat{f_{j,k}}\|_{L^2}2^{\lambda_2}2^{\lambda_1/2}(\langle t\rangle^{1/2}2^{k})^{-1},
\end{equation*}
which gives the desired bound \eqref{triv9}. This completes the proof of \eqref{Linfty3.6} when $2^j\geq \langle t\rangle^{1/2}$.

The bound \eqref{Linfty3.1} follows by a similar argument. We decompose the integral dyadically around the critical point $(\rho,0,0)$, as in \eqref{triv6} with $2^{\lambda_1}=\langle t\rangle^{-1/2}2^{3k^++C}$ and $2^{\lambda_2}=\langle t\rangle^{-1/2}2^{k^+}$, and integrate by parts four times either in $\xi_1$, or in $\xi_2$, or in $\xi_3$. 

The bound \eqref{Linfty3.6} when $2^j\leq\langle t\rangle^{1/2}$ follows from \eqref{Linfty3.1} using also \eqref{Linfty3.3}.
\end{proof}

We prove now a Hardy-type estimate involving localization in frequency and space.

\begin{lemma}\label{hyt1}
For $f\in L^2(\mathbb{R}^3)$ and $k\in\mathbb{Z}$ let
\begin{equation}\label{hyt2}
\begin{split}
&A_k:=\|P_kf\|_{L^2}+\sum_{l=1}^3\|\varphi_k(\xi)(\partial_{\xi_l}\widehat{f})(\xi)\|_{L^2_\xi},\qquad B_k:=\Big[\sum_{j\geq\max(-k,0)}2^{2j}\|Q_{jk}f\|_{L^2}^2\Big]^{1/2}.
\end{split}
\end{equation} 
Then, for any $k\in\mathbb{Z}$,
\begin{equation}\label{hyt3}
A_k\lesssim\sum_{|k'-k|\leq 4}B_{k'}
\end{equation}
and
\begin{equation}\label{hyt3.1}
B_k\lesssim
\begin{cases}
\sum_{|k'-k|\leq 4}A_{k'}\qquad&\text{ if } k\geq 0,\\
\sum_{k'\in\mathbb{Z}}A_{k'}2^{-|k-k'|/2}\min(1,2^{k'-k})\qquad&\text{ if } k\leq 0.
\end{cases}
\end{equation}
\end{lemma}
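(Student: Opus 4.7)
The plan splits the two inequalities, with \eqref{hyt3.1} for $k\le 0$ being the delicate case.

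For \eqref{hyt3}, the first half $\|P_kf\|_{L^2}\lesssim B_k$ follows from almost-orthogonality $\|P_kf\|_{L^2}^2\approx\sum_j\|Q_{jk}f\|_{L^2}^2$ and Cauchy--Schwarz in $j$, using the convergent series $\sum_{j\ge\max(-k,0)}2^{-2j}\lesssim 1$. For the weighted half I apply the product rule
\[
\varphi_k(\xi)\partial_{\xi_l}\widehat f(\xi)=\partial_{\xi_l}(\varphi_k\widehat f)(\xi)-(\partial_{\xi_l}\varphi_k)(\xi)\widehat f(\xi);
\]
the first term equals $\mathcal{F}(-ix_lP_kf)(\xi)$ and so has $L^2$ norm $\|x_lP_kf\|_{L^2}\lesssim B_k$, since $|x|\lesssim 2^j$ on the support of $\widetilde{\varphi}_j^{(k)}$. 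The commutator contributes $\lesssim 2^{-k}\|P_{[k-1,k+1]}f\|_{L^2}$, which is absorbed into $B_{[k-1,k+1]}$ via the elementary bound $B_{k'}\gtrsim 2^{\max(-k',0)}\|P_{k'}f\|_{L^2}$ coming from the restriction $j\ge\max(-k',0)$ in the definition of $B_{k'}$.

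For \eqref{hyt3.1}, the physical-space description $B_k^2\approx \|\max(|x|,2^{\max(-k,0)})\,P_kf\|_{L^2}^2$ yields
\[
B_k\lesssim\|xP_kf\|_{L^2}+2^{\max(-k,0)}\|P_kf\|_{L^2},
\]
and the same product-rule identity bounds $\|xP_kf\|_{L^2}\lesssim A_k+2^{-k}\|P_{[k-1,k+1]}f\|_{L^2}$. For $k\ge 0$ the prefactor $2^{\max(-k,0)}$ equals $1$ and the trivial bound $\|P_{k'}f\|_{L^2}\le A_{k'}$ immediately gives the claimed estimate.

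The case $k\le 0$ is the main obstacle: the naive bound $\|P_kf\|_{L^2}\le A_k$ loses the factor $2^{-k}\ge 1$ required on the right. To recover it I exploit the decay of $\widehat f$ at infinity through the radial fundamental-theorem-of-calculus identity
\[
\widehat f(\xi)=-\int_1^\infty\xi\cdot\nabla\widehat f(r\xi)\,dr,
\]
justified after approximation of $f$ in the Schwartz class (where $\widehat f(r\xi)\to 0$ as $r\to\infty$). Taking the $L^2$ norm on $\{|\xi|\approx 2^k\}$ and combining Minkowski's integral inequality with the change of variables $\eta=r\xi$ (Jacobian $r^{-3}$), I obtain
\[
\|P_kf\|_{L^2}\lesssim 2^k\int_1^\infty r^{-3/2}\|\nabla\widehat f\|_{L^2(|\eta|\approx r 2^k)}\,dr\lesssim 2^k\sum_{k'\ge k}2^{-(k'-k)/2}A_{k'},
\]
so that $2^{-k}\|P_kf\|_{L^2}\lesssim \sum_{k'\ge k}2^{-(k'-k)/2}A_{k'}$. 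Combining with the bound on $\|xP_kf\|_{L^2}$ produces $B_k\lesssim \sum_{k'\ge k-1}2^{-(k'-k)/2}A_{k'}$, which is stronger than \eqref{hyt3.1} for $k\le 0$, since the additional terms with $k'<k$ in the stated bound come with the smaller weights $2^{3(k'-k)/2}\le 1$.
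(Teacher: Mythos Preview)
Your argument is correct. For \eqref{hyt3} and for \eqref{hyt3.1} when $k\ge 0$ you do exactly what the paper does: pass through the equivalence $B_k\approx 2^{\max(-k,0)}\|P_kf\|_{L^2}+\||x|P_kf\|_{L^2}$ and use the product rule on $\varphi_k\widehat f$.

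For the Hardy-type estimate $2^{-k}\|P_kf\|_{L^2}\lesssim\sum_{k'}2^{-|k-k'|/2}\min(1,2^{k'-k})A_{k'}$ when $k\le 0$, your route is genuinely different. The paper writes
\[
f=\frac{1}{1+|x|^2}f+\sum_{l=1}^3\frac{x_l}{1+|x|^2}(x_lf),
\]
passes to the Fourier side, and uses that the convolution kernels $\mathcal F\{(1+|x|^2)^{-1}\}$ and $\mathcal F\{x_l(1+|x|^2)^{-1}\}$ are $O(|\xi|^{-2})$; dyadic Young/H\"older then produces contributions from \emph{all} $k'\in\mathbb Z$, with the weights $2^{-|k-k'|/2}\min(1,2^{k'-k})$. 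Your radial fundamental-theorem-of-calculus argument is more elementary (no kernel asymptotics needed) and in fact yields the sharper one-sided bound $2^{-k}\|P_kf\|_{L^2}\lesssim\sum_{k'\ge k}2^{-(k'-k)/2}A_{k'}$, showing that the low-frequency terms $k'<k$ in the stated estimate are not actually required. The paper's approach, on the other hand, is more flexible: the convolution framework generalizes cleanly to other weights and does not rely on a pointwise identity along rays, whose justification for general $f\in L^2$ (your ``approximation in the Schwartz class'') requires a short additional argument using that $\nabla\widehat f\in L^2_{loc}$ away from the origin when the right-hand side is finite.
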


\begin{proof} Clearly, by almost orthogonality,
\begin{equation}\label{hyt4}
\begin{split}
B_k&\approx 2^{\max(-k,0)}\|P_kf\|_{L^2}+\||x|\cdot P_kf\|_{L^2}\\
&\approx 2^{\max(-k,0)}\|P_kf\|_{L^2}+\sum_{l=1}^3\|\partial_{\xi_l}(\varphi_k(\xi)\widehat{f}(\xi))\|_{L^2_\xi}.
\end{split}
\end{equation}
The bound \eqref{hyt3} follows. The bound in \eqref{hyt3.1} also follows when $k\geq 0$. On the other hand, if $k\leq 0$ then it suffices to prove that
\begin{equation}\label{hyt5}
2^{-k}\|P_kf\|_{L^2}\lesssim \sum_{k'\in\mathbb{Z}}A_{k'}2^{-|k-k'|/2}\min(1,2^{k'-k}).
\end{equation}

For this we let $f_l:=x_l f$, $l\in\{1,2,3\}$, so
\begin{equation*}
f=\frac{1}{|x|^2+1}f+\sum_{l=1}^3\frac{x_l}{|x|^2+1}f_l
\end{equation*}
and, for any $k'\in\mathbb{Z}$,
\begin{equation*}
\|P_{k'}f\|_{L^2}+\sum_{l=1}^3\|P_{k'}f_l\|_{L^2}\lesssim A_{k'}.
\end{equation*}

Since $|\mathcal{F}\{(x^2+1)^{-1}\}(\xi)|\lesssim |\xi|^{-2}$ and $|\mathcal{F}\{x_l(x^2+1)^{-1}\}(\xi)|\lesssim |\xi|^{-2}$ for $l\in\{1,2,3\}$, for \eqref{hyt5} it suffices to prove that
\begin{equation}\label{hyt7}
2^{-k}\|\varphi_k(\xi)(g\ast K)(\xi)\|_{L^2}\lesssim \sum_{k'\in\mathbb{Z}}A_{k'}2^{-|k-k'|/2}\min(1,2^{k'-k}),
\end{equation}
provided that $\|\varphi_{k'}\cdot g\|_{L^2}\lesssim A_{k'}$ and $K(\eta)=|\eta|^{-2}$. With $g_{k'}=\varphi_{k'}\cdot g$ we estimate 
\begin{equation*}
\|\varphi_k(\xi)(g_{k'}\ast K)(\xi)\|_{L^2}\lesssim \|g_{k'}\|_{L^2}\|K\cdot\varphi_{\leq k+10}\|_{L^1}\lesssim 2^k\|g_{k'}\|_{L^2}\quad\text{ if }|k-k'|\leq 6;
\end{equation*} 
\begin{equation*}
\|\varphi_k(\xi)(g_{k'}\ast K)(\xi)\|_{L^2}\lesssim 2^{3k/2}\|g_{k'}\|_{L^2}\|K\cdot\varphi_{[k'-4,k'+4]}\|_{L^2}\lesssim 2^{3k/2}2^{-k'/2}\|g_{k'}\|_{L^2} \quad\text{ if }k'\geq k+6;
\end{equation*}
\begin{equation*}
\|\varphi_k(\xi)(g_{k'}\ast K)(\xi)\|_{L^2}\lesssim \|g_{k'}\|_{L^1}\|K\cdot\varphi_{[k-4,k+4]}\|_{L^2}\lesssim 2^{3k'/2}2^{-k/2}\|g_{k'}\|_{L^2} \quad\text{ if }k'\leq k-6.
\end{equation*}
The desired bound \eqref{hyt7} follows, which completes the proof of the lemma.
\end{proof}

\section{Elliptic estimates}\label{dtv}

In this section we prove several bounds on the functions $V^{wa}_{\mathcal{L}}$, $V^{kg}_{\mathcal{L}}$, $\mathcal{N}^{wa}_{\mathcal{L}}$ and $\mathcal{N}^{kg}_{\mathcal{L}}$ at fixed times $t\in[0,T]$. These bounds are used in the energy estimates and the normal form arguments in the next sections.

\subsection{Bounds on the profiles $V^{wa}_\mathcal{L}$ and $V^{kg}_{\mathcal{L}}$} Recall the definitions \eqref{variables4L} and the bootstrap assumptions \eqref{bootstrap2.1}--\eqref{bootstrap2.4}. For $\mu\in\{(wa,+),(wa,-), (kg,+), (kg,-)\}$, $\mathcal{L}\in\mathcal{V}_{n}$, $n\in\{0,\ldots,N_1\}$, $t\in[0,T]$, $(j,k)\in\mathcal{J}$, and $J\geq\max(-k,0)$ we define the localized profiles 
\begin{equation}\label{abc20.5}
V_{j,k;\mathcal{L}}^\mu(t):=P_{[k-2,k+2]}Q_{jk}V_{\mathcal{L}}^\mu(t),\qquad V_{\leq J,k;\mathcal{L}}^\mu(t):=\sum_{j\leq J}V_{j,k;\mathcal{L}}^\mu(t),\qquad V_{>J,k;\mathcal{L}}^\mu(t):=\sum_{j>J}V_{j,k;\mathcal{L}}^\mu(t).
\end{equation}
For simplicity of notation, we write sometimes $V_{j,k}^\mu$, $V_{\leq J,k}^\mu$, and $V_{>J,k}^\mu$ to denote the corresponding functions $V_{j,k;\mathcal{L}}^\mu$, $V_{\leq J,k;\mathcal{L}}^\mu$ and $V_{>J,k;\mathcal{L}}^\mu$ when $\mathcal{L}=Id$.

\begin{lemma}\label{dtv6}
Assume that $(u,v)$ is a solution to \eqref{on1} on some time interval $[0,T]$, $T\geq 1$, satisfying the bounds \eqref{bootstrap2.1}--\eqref{bootstrap2.4} in Proposition \ref{bootstrap}. Assume that $\mathcal{L}\in\mathcal{V}_{n}$, $n\in\{0,\ldots,N_1\}$. 

(i) For any $t\in[0,T]$ we have
\begin{equation}\label{abc21}
\||\nabla|^{-1/2}V^{wa}_{\mathcal{L}}(t)\|_{H^{N(n)}}+\|V^{kg}_{\mathcal{L}}(t)\|_{H^{N(n)}}\lesssim \varep_1\langle t\rangle^{H(n)\delta}.
\end{equation}
Moreover, if $n\leq N_1-1$, $k\in\mathbb{Z}$, and $l\in\{1,2,3\}$ then
\begin{equation}\label{abc21.5}
2^{k/2}\|\varphi_k(\xi)(\partial_{\xi_l}\widehat{V^{wa}_{\mathcal{L}}})(\xi,t)\|_{L^2_\xi}+2^{k^+}\|\varphi_k(\xi)(\partial_{\xi_l}\widehat{V^{kg}_{\mathcal{L}}})(\xi,t)\|_{L^2_\xi}\lesssim\varep_1Y(k,t;n),
\end{equation}
where
\begin{equation}\label{abc26.4}
Y(k,t;n):=\langle t\rangle^{H(n+1)\delta}2^{-N(n+1)k^+}.
\end{equation}
As a consequence, if $n\leq N_1-1$ and $(k,j)\in\mathcal{J}$ then
\begin{equation}\label{abc22}
2^j2^{k/2}\|Q_{jk}V^{wa}_{\mathcal{L}}(t)\|_{L^2}+2^j2^{k^+}\|Q_{jk}V^{kg}_{\mathcal{L}}(t)\|_{L^2}\lesssim\varep_1Y(k,t;n).
\end{equation}
In particular, if $n\leq N_1-1$ and $k\in\mathbb{Z}$ then
\begin{equation}\label{abc22.1}
2^{k/2}\|P_kV^{wa}_{\mathcal{L}}(t)\|_{L^2}+2^{k^+}\|P_kV^{kg}_{\mathcal{L}}(t)\|_{L^2}\lesssim\varep_12^{k^-}\langle t\rangle^{H(n+1)\delta}2^{-N(n+1)k^+}.
\end{equation}

(ii) As a consequence, if $n\leq N_1-1$ then for any $t\in[0,T]$ and $k\in\mathbb{Z}$
\begin{equation}\label{abc23.2}
\sum_{j\geq -k^-}\|e^{-it\Lambda_{wa}}V^{wa,+}_{j,k;\mathcal{L}}(t)\|_{L^\infty}\lesssim\varep_1Y(k,t;n)\langle t\rangle^{\delta/2}2^{2k^+}2^{k^-}\min(\langle t\rangle^{-1},2^{k^-}),
\end{equation}
and
\begin{equation}\label{abc23}
\sum_{j\geq -k^-}\|e^{-it\Lambda_{kg}}V^{kg,+}_{j,k;\mathcal{L}}(t)\|_{L^\infty}\lesssim\varep_1Y(k,t;n)2^{2k^+}2^{k^-/2}\min(\langle t\rangle^{-1},2^{2k^-}).
\end{equation}
Moreover, if $n\leq N_1-2$ and $2^{2k^--20}\langle t\rangle\geq 1$ then 
\begin{equation}\label{abc23.1}
\sum_{2^j\in[2^{-k^-},2^{k^--20}\langle t\rangle]}\|e^{-it\Lambda_{kg}}V^{kg,+}_{j,k;\mathcal{L}}(t)\|_{L^\infty}\lesssim \varep_1Y(k,t;n+1)\langle t\rangle^{-3/2+\delta/2}2^{-k^-/2}2^{6k^+}.
\end{equation}

(iii) In the case $n=0$ (so $\mathcal{L}=Id$), we have the stronger bounds
\begin{equation}\label{vcx1.1}
\begin{split}
\sum_{j\geq -k^-}2^j\|Q_{jk}V^{wa}(t)\|_{L^2}&\lesssim \varep_12^{-k^-/2-\kappa k^-}2^{-N_0k^++d'k^+},\\
\|\widehat{P_kV^{kg}}(t)\|_{L^\infty}&\lesssim \varep_12^{-k^-/2+\kappa k^-}2^{-N_0k^++d'k^+},\\
\|P_kV^{kg}(t)\|_{L^2}&\lesssim \varep_12^{k^-+\kappa k^-}2^{-(N(0)-2)k^+},
\end{split}
\end{equation}
for any $k\in\mathbb{Z}$ and $t\in[0,T]$. As a consequence 
\begin{equation}\label{vcx3.1}
\sum_{j\geq -k^-}\|e^{-it\Lambda_{wa}}V^{wa,+}_{j,k}(t)\|_{L^\infty}\lesssim \varep_12^{k^-(1-\kappa)}\min\{\langle t\rangle^{-1},2^{k^-}\}2^{-N_0k^++(d'+3)k^+}.
\end{equation}
Moreover, if $\langle t\rangle\geq 2^{-2k^-+20}$ and $2^J\in [2^{-k^-},2^{k^--20}\langle t\rangle]$ then
\begin{equation}\label{vcx5}
\|e^{-it\Lambda_{kg}}V^{kg,+}_{\leq J,k}(t)\|_{L^\infty}\lesssim \varep_1\langle t\rangle^{-3/2}2^{-k^-/2+\kappa k^-/20}2^{-N_0k^++(d'+6)k^+}.
\end{equation}
\end{lemma} 

\begin{proof} All the bounds in the lemma follow easily from the bootstrap assumptions \eqref{bootstrap2.1}--\eqref{bootstrap2.4}, and Lemmas \ref{LinEstLem} and \ref{hyt1}. Indeed, the bounds \eqref{abc21}, \eqref{abc21.5}, and \eqref{abc22} follow directly from the bootstrap assumptions \eqref{bootstrap2.1}--\eqref{bootstrap2.2} and the bounds \eqref{hyt3.1}. The bounds \eqref{abc22.1} follow from \eqref{abc22} by summation over $j\geq -k^-$. The bounds \eqref{vcx1.1} follow from \eqref{bootstrap2.4} and Definition \ref{MainZDef}. 

The dispersive bounds \eqref{abc23.2} and \eqref{vcx3.1} follow from \eqref{Linfty1} and the bounds \eqref{abc22} and \eqref{vcx1.1} respectively. The bounds \eqref{abc23} follow from \eqref{Linfty3} and \eqref{abc22}, by summation over $j$. The bounds \eqref{abc23.1} follow from \eqref{Linfty3.6}, once we notice that, as a consequence of \eqref{abc22}, for $|\alpha|\leq 1$ 
\begin{equation}\label{vcx5.1}
\|Q_{jk}\Omega^\alpha V_{\mathcal{L}}^{kg}\|_{L^2}\lesssim \varep_1Y(k,t;n+|\alpha|)2^{-j}2^{-k^+}.
\end{equation}
The bounds \eqref{vcx5} follow directly from \eqref{abc21} if $2^{k}\gtrsim\langle t\rangle^{1/(3d)}$ and from \eqref{vcx1.1} if $2^k\lesssim \langle t\rangle^{-1/2+\kappa/8}$. They also follow from \eqref{Linfty3.1} and \eqref{vcx1.1} if $2^J\leq\langle t\rangle^{1/2}$. Finally, if $2^{k}\in[\langle t\rangle^{-1/2+\kappa/8},\langle t\rangle^{1/(3d)}]$ and $2^J\geq\langle t\rangle^{1/2}$ then we use \eqref{Linfty3.6} and \eqref{vcx5.1} to estimate the remaining contribution by
\begin{equation*}
\begin{split}
\sum_{2^j\in[\langle t\rangle^{1/2},2^J]}&\|e^{-it\Lambda_{kg}}V^{kg,+}_{j,k}(t)\|_{L^\infty}\lesssim \sum_{2^j\in[\langle t\rangle^{1/2},2^J]}2^{5k^+}\langle t\rangle^{-3/2}2^{j/2-k^-}\langle t\rangle^{\delta}\cdot\varep_1Y(k,t;1)2^{-j}2^{-k^+}\\
&\lesssim \varep_1\langle t\rangle^{-3/2}2^{-k^-}2^{-N_0k^++2dk^++5k^+}\langle t\rangle^{\delta+H(2)\delta-1/4}.
\end{split}
\end{equation*}
This suffices to complete the proof of \eqref{vcx5} in the range $2^{k}\in[\langle t\rangle^{-1/2+\kappa/8},\langle t\rangle^{1/(3d)}]$.

We remark that the bounds \eqref{vcx3.1} and \eqref{vcx5} are the only dispersive bounds that provide sharp rates of decay $\langle t\rangle^{-1}$ and $\langle t\rangle^{-3/2}$ for significant parts of the normalized solutions $U^{wa}(t)$ and $U^{kg}(t)$ respectively. All the other pointwise bounds involve small $\langle t\rangle^{C\delta}$ losses.
\end{proof}

\subsection{The nonlinearities $\mathcal{N}_\mathcal{L}^{wa}$} We prove now several bounds on the nonlinearities $\mathcal{N}_\mathcal{L}^{wa}$. 

\begin{lemma}\label{dtv7}
Assume that $\mathcal{N}_{\mathcal{L}}^{wa}$ is as in \eqref{on6.1L}, $\mathcal{L}\in\mathcal{V}_{n}$, $n\in\{0,\ldots,N_1\}$, $t\in[0,T]$, and $k\in\mathbb{Z}$. 

(i) Then
\begin{equation}\label{abc30}
\|P_k\mathcal{N}_{\mathcal{L}}^{wa}(t)\|_{L^2}+\|P_k\partial_tV_{\mathcal{L}}^{wa}(t)\|_{L^2}\lesssim\varep_1^22^{k^-/2}\langle t\rangle^{H_{wa}''(n)\delta}\min(2^{k^-},\langle t\rangle^{-1})2^{-N(n)k^++5k^+},
\end{equation}
where
\begin{equation}\label{abc30.1}
H_{wa}''(0):=5,\qquad H_{wa}''(n):=H(n)+160\text{ for }n\in\{1,\ldots,N_1\}.
\end{equation}

(ii) Moreover for $l\in\{1,2,3\}$ and $n\leq N_1-1$,
\begin{equation}\label{abc30.3}
\|P_k(x_l\mathcal{N}_{\mathcal{L}}^{wa})(t)\|_{L^2}\lesssim\varep_1^22^{k^-/2}\langle t\rangle^{H_{wa}''(n)\delta}2^{-N(n)k^++5k^+}.
\end{equation}
\end{lemma}

\begin{proof} (i) For $k\in\mathbb{Z}$ let
\begin{equation}\label{cxz6}
\mathcal{X}_k:=\big\{(k_1,k_2)\in\mathbb{Z}^2:\,|\max(k_1,k_2)-k|\leq 6\text{ or }(\max(k_1,k_2)\geq k+7\text{ and }|k_1-k_2|\leq 6)\big\}.
\end{equation}
Let $m$ denote generic multipliers that satisfy the bounds
\begin{equation}\label{abc9}
\|\mathcal{F}^{-1}(\varphi_k\cdot D_{\xi}^\alpha m)\|_{L^1_x}\lesssim_{\alpha} 2^{-|\alpha|k^-},\qquad\text{ for any }\,k\in\mathbb{Z}\,\text{ and }\,\alpha\in\mathbb{Z}_+^3.
\end{equation}
For $m_1,m_2$ as in \eqref{abc9}, let $I$ denote a bilinear operator of the form
\begin{equation}\label{abc36}
\widehat{I[f,g]}(\xi):=\int_{\mathbb{R}^3}m_1(\xi-\eta)m_2(\eta)\widehat{f}(\xi-\eta)\widehat{g}(\eta)\,d\eta.
\end{equation}

Notice that for $\beta\in\{0,1,2,3\}$ and $\mathcal{L}^\ast\in\{\Gamma_1,\Gamma_2,\Gamma_3,\Omega_{23},\Omega_{31},\Omega_{12}\}$ we have
\begin{equation}\label{abc4}
[\mathcal{L}^\ast,\partial_\beta]=\sum_{\gamma\in\{0,1,2,3\}}c_{\mathcal{L}^\ast,\beta}^\gamma\partial_{\gamma},
\end{equation}
for suitable coefficients $c_{\mathcal{L}^\ast,\beta}^\gamma\in\mathbb{R}$. Clearly $\|P_k\mathcal{N}_{\mathcal{L}}^{wa}(t)\|_{L^2}\approx\|P_k\partial_tV_{\mathcal{L}}^{wa}(t)\|_{L^2}$. Recall also the identities \eqref{on5} and the definitions $\mathcal{N}^{wa}_{\mathcal{L}}=\mathcal{L}[A^{\alpha\beta}\partial_\alpha v\partial_\beta v+Dv^2]$. For \eqref{abc30} it suffices to prove that, with $I$ defined as in \eqref{abc9}--\eqref{abc36},
\begin{equation}\label{abc37}
\begin{split}
&\sum_{(k_1,k_2)\in\mathcal{X}_k}I^{wa}_{k,k_1,k_2}(t)\lesssim\varep_1^22^{k^-/2}\langle t\rangle^{H_{wa}''(n)\delta}\min(2^{k^-},\langle t\rangle^{-1})2^{-N(n)k^++5k^+},\\
&I^{wa}_{k,k_1,k_2}(t):=\|P_kI[P_{k_1}U^{kg,\iota_1}_{\mathcal{L}_1},P_{k_2}U^{kg,\iota_2}_{\mathcal{L}_2}](t)\|_{L^2},
\end{split}
\end{equation}
for any $\iota_1,\iota_2\in\{+,-\}$, $\mathcal{L}_1\in\mathcal{V}_{n_1}$, $\mathcal{L}_2\in\mathcal{V}_{n_2}$, $n_1+n_2\leq n$. Without loss of generality, we may assume that $n_1\leq n_2$. To prove \eqref{abc37} we consider several cases.
\medskip

{\bf{Case 1:}} Assume first that $(n_1,n_2)\neq (0,0)$ and $k\geq -10$. Let
\begin{equation}\label{abc50}
S_1:=\sum_{(k_1,k_2)\in\mathcal{X}_k,\,k_1\leq k_2+10}I^{wa}_{k,k_1,k_2}(t),\qquad S_2:=\sum_{(k_1,k_2)\in\mathcal{X}_k,\,k_2\leq k_1-10}I^{wa}_{k,k_1,k_2}(t).
\end{equation}
Using \eqref{abc21} and \eqref{abc23} we estimate
\begin{equation}\label{abc50.1}
\begin{split}
S_1&\lesssim \sum_{(k_1,k_2)\in\mathcal{X}_k,\,k_1\leq k_2+10}\varep_1^2\langle t\rangle^{H(n_2)\delta}2^{-N(n_2)k_2^+}\langle t\rangle^{H(n_1+1)\delta-1}2^{-N(n_1+1)k_1^++2k_1^+}2^{k_1^-/2}\\
&\lesssim \varep_1^2\langle t\rangle^{H(n_2)\delta+H(n_1+1)\delta-1}2^{-N(n_2)k^+}.
\end{split}
\end{equation}
Similarly, since $N(n)\leq N(n_1+1)$,
\begin{equation}\label{abc50.2}
S_2\lesssim \varep_1^2\langle t\rangle^{\delta/2+H(n_2)\delta+H(n_1+1)\delta-1}2^{-N(n)k^++2k^+}.
\end{equation}
Notice that, as a consequence of \eqref{fvc1}, if $n_1,n_2,n_1+n_2\in[0,N_1-1]\cap\mathbb{Z}$, and $n_2\neq 0$ then
\begin{equation}\label{abc50.3}
H(n_1+1)+H(n_2)\leq H(n_1+n_2)+10.
\end{equation}
Thus $S_1+S_2\lesssim \varep_1^2\langle t\rangle^{H_{wa}''(n)\delta-1}2^{-N(n)k^++4k^+}$, as desired.
\medskip

{\bf{Case 2:}} Assume now that $(n_1,n_2)\neq (0,0)$ and $k\leq -10$.{\footnote{One should think of $(n_1,n_2,n)=(0,N_1,N_1)$ as the worst case. In this case the only available bounds for the profiles $U^{kg,\iota_2}_{\mathcal{L}_2}$ are the $L^2$ bounds in \eqref{abc21}.}} Notice that, as a consequence of \eqref{abc21} and \eqref{abc22.1},
\begin{equation}\label{ku1}
\begin{split}
I^{wa}_{k,k_1,k_2}(t)&\lesssim 2^{3\min(k,k_1,k_2)/2}\|P_{k_1}U^{kg,\iota_1}_{\mathcal{L}_1}\|_{L^2}\|P_{k_2}U^{kg,\iota_2}_{\mathcal{L}_2}\|_{L^2}\\
&\lesssim \varep_1^22^{-4(k_1^++k_2^+)}2^{3\min(k,k_1,k_2)/2}2^{k_1}\langle t\rangle^{H(n_1+1)\delta+H(n_2)\delta}.
\end{split}
\end{equation}
Also, as a consequence of \eqref{abc21} and \eqref{abc23}, 
\begin{equation}\label{ku2}
\begin{split}
I^{wa}_{k,k_1,k_2}(t)&\lesssim \|P_{k_1}U^{kg,\iota_1}_{\mathcal{L}_1}\|_{L^\infty}\|P_{k_2}U^{kg,\iota_2}_{\mathcal{L}_2}\|_{L^2}\\
&\lesssim \varep_1^22^{-4(k_1^++k_2^+)}2^{k_1/2}\langle t\rangle^{H(n_1+1)\delta+H(n_2)\delta-1}.
\end{split}
\end{equation}

Recalling \eqref{abc50.3}, the bounds \eqref{ku1} already suffice to prove \eqref{abc37} if $2^k\lesssim \langle t\rangle^{-1+145\delta}$. On the other hand, if $2^k\geq \langle t\rangle^{-1+145\delta}$ then the contribution of the pairs $(k_1,k_2)\in\mathcal{X}_k$ for which $2^{k_1^-}\lesssim 2^k\langle t\rangle^{290\delta}$ can be bounded using \eqref{ku2}. Also, the contribution of the pairs $(k_1,k_2)\in\mathcal{X}_k$ for which $2^{k_1^-}\lesssim 2^{-k}\langle t\rangle^{-1+145\delta}$ can be bounded using again \eqref{ku1}. After these reductions, for \eqref{abc37} it remains to prove that
\begin{equation}\label{ku3}
\sum_{(k_1,k_2)\in\mathcal{X}_k,\,2^{k_1^--20}\geq \max (2^{k},2^{-k}\langle t\rangle^{-1+145\delta})}I^{wa}_{k,k_1,k_2}(t)\lesssim\varep_1^22^{k/2}\langle t\rangle^{H''_{wa}(n)\delta-1}.
\end{equation}

We set $2^J=2^{k_1^--30}\langle t\rangle$ and decompose $P_{k_1}V^{kg,\iota_1}_{\mathcal{L}_1}=V_{\leq J,k_1;\mathcal{L}_1}^{kg,\iota_1}+V_{> J,k_1;\mathcal{L}_1}^{kg,\iota_1}$, as in \eqref{abc20.5}. We have
\begin{equation}\label{ku4}
\begin{split}
\|e^{-it\Lambda_{kg,\iota_1}}V^{kg,\iota_1}_{\leq J,k_1;\mathcal{L}_1}(t)\|_{L^\infty}&\lesssim \varep_12^{-k_1^-/2}\langle t\rangle^{-3/2+H(n_1+2)\delta+\delta}2^{-N(n_1+2)k_1^++6k_1^+},\\
\|V^{kg,\iota_1}_{>J,k_1;\mathcal{L}_1}(t)\|_{L^2}&\lesssim \varep_12^{-k_1^-}\langle t\rangle^{-1+H(n_1+1)\delta}2^{-N(n_1+1)k_1^+},
\end{split}
\end{equation}
see \eqref{abc23.1} and \eqref{abc22}. Therefore, using also \eqref{abc21}, for $(k_1,k_2)\in\mathcal{X}_k$ as in \eqref{ku3},
\begin{equation*}
\begin{split}
\|P_kI[e^{-it\Lambda_{kg,\iota_1}}V^{kg,\iota_1}_{\leq J,k_1;\mathcal{L}_1}(t),P_{k_2}U^{kg,\iota_2}_{\mathcal{L}_2}(t)]\|_{L^2}&\lesssim \|e^{-it\Lambda_{kg,\iota_1}}V^{kg,\iota_1}_{\leq J,k_1;\mathcal{L}_1}(t)\|_{L^\infty}\|P_{k_2}U^{kg,\iota_2}_{\mathcal{L}_2}(t)\|_{L^2}\\
&\lesssim\varep_1^22^{-k_1^-/2}\langle t\rangle^{-3/2+H(n_1+2)\delta+\delta+H(n_2)\delta}2^{-4k_1^+}
\end{split} 
\end{equation*}
and
\begin{equation*}
\begin{split}
\|P_kI[e^{-it\Lambda_{kg,\iota_1}}V^{kg,\iota_1}_{>J,k_1;\mathcal{L}_1}(t),P_{k_2}U^{kg,\iota_2}_{\mathcal{L}_2}(t)]\|_{L^2}&\lesssim 2^{3k/2}\|V^{kg,\iota_1}_{>J,k_1;\mathcal{L}_1}(t)\|_{L^2}\|P_{k_2}U^{kg,\iota_2}_{\mathcal{L}_2}(t)\|_{L^2}\\
&\lesssim\varep_1^22^{3k/2}2^{-k_1^-}\langle t\rangle^{-1+H(n_1+1)\delta+H(n_2)\delta}2^{-4k_1^+}.
\end{split} 
\end{equation*}
Therefore, using \eqref{abc50.3} and the identity $H(n_1+2)=H(n_1+1)+200$, we have
\begin{equation*}
I^{wa}_{k,k_1,k_2}(t)\lesssim \varep_1^22^{k/2}\langle t\rangle^{-1+H(n)\delta+10\delta}2^{-4k_1^+}[2^{-k/2}2^{-k_1^-/2}\langle t\rangle^{-1/2+205\delta}+2^{k-k_1^-}],
\end{equation*}
which suffices to prove the bounds \eqref{ku3}.
\medskip

{\bf{Case 3.}} Finally, assume that $(n_1,n_2,n)=(0,0,0)$. We estimate first, using symmetry, \eqref{abc21}, and \eqref{abc23}
\begin{equation}\label{cxz7}
\begin{split}
\sum_{(k_1,k_2)\in\mathcal{X}_k}I^{wa}_{k,k_1,k_2}(t)&\lesssim \sum_{(k_1,k_2)\in\mathcal{X}_k,\,k_2\leq k_1}\|P_{k_1}U^{kg}(t)\|_{L^2}\|P_{k_2}U^{kg}(t)\|_{L^\infty}\\
&\lesssim\sum_{(k_1,k_2)\in\mathcal{X}_k,\,k_2\leq k_1}\varep_1^2\langle t\rangle^{\delta}2^{-N(0)k_1^+}\cdot\langle t\rangle^{H(1)\delta-1}2^{k_2^-/2}\\
&\lesssim \varep_1^2\langle t\rangle^{(1+H(1))\delta-1}2^{-N(0)k^+}.
\end{split}
\end{equation}
This suffices to prove \eqref{abc37} when $2^k\gtrsim\langle t\rangle^{2\delta}$. Moreover, we also have, as in \eqref{ku1},
\begin{equation}\label{ku6}
\begin{split}
I^{wa}_{k,k_1,k_2}(t)&\lesssim 2^{3\min(k,k_1,k_2)/2}\|P_{k_1}U^{kg,\iota_1}\|_{L^2}\|P_{k_2}U^{kg,\iota_2}\|_{L^2}\\
&\lesssim \varep_1^22^{-N_0(k_1^++k_2^+)}2^{3\min(k,k_1,k_2)/2}\langle t\rangle^{2\delta}.
\end{split}
\end{equation}
This suffices to prove \eqref{abc37} when $2^k\lesssim\langle t\rangle^{-1+2\delta}$.

On the other hand, if $2^k\in[\langle t\rangle^{-1+2\delta},\langle t\rangle^{2\delta}]$ then we use first \eqref{abc22.1} and \eqref{abc23} to show that
\begin{equation*}
I^{wa}_{k,k_1,k_2}(t)\lesssim \varep_1^22^{-10(k_1^++k_2^+)}\langle t\rangle^{-1+2H(1)\delta}2^{\min(k_1^-,k_2^-)}2^{\max(k_1^-,k_2^-)/2}.
\end{equation*}
This suffices to control the contribution of the pairs $(k_1,k_2)$ for which $2^{\min(k_1^-,k_2^-)}\lesssim \langle t\rangle^{-1/2}$. For \eqref{abc37} it remains to prove that if $2^k\in[\langle t\rangle^{-1+2\delta},\langle t\rangle^{2\delta}]$ then
\begin{equation}\label{ku9}
\sum_{(k_1,k_2)\in\mathcal{X}_k,\,\langle t\rangle^{-1/2}2^{20}\leq 2^{k_2^-}\leq 2^{k_1^-}}I^{wa}_{k,k_1,k_2}(t)\lesssim\varep_1^22^{k^-/2}\langle t\rangle^{-1+5\delta}2^{-N(0)k^++4k^+}.
\end{equation}

We set $2^J=2^{k_1^--30}\langle t\rangle$ and decompose $P_{k_1}V^{kg,\iota_1}=V_{\leq J,k_1}^{kg,\iota_1}+V_{> J,k_1}^{kg,\iota_1}$, as in \eqref{abc20.5}.
We have
\begin{equation}\label{ku10}
\begin{split}
\|e^{-it\Lambda_{kg,\iota_1}}V^{kg,\iota_1}_{\leq J,k_1}(t)\|_{L^\infty}&\lesssim \varep_12^{-k_1^-/2+\kappa k_1^-/20}\langle t\rangle^{-3/2}2^{-N_0k_1^++(d'+6)k_1^+},\\
\|V^{kg,\iota_1}_{>J,k_1}(t)\|_{L^2}&\lesssim \varep_12^{-k_1^-}\langle t\rangle^{-1+H(1)\delta}2^{-N(1)k_1^+},
\end{split}
\end{equation}
see \eqref{vcx5} and \eqref{abc22}. Therefore, using also \eqref{vcx1.1}, for $(k_1,k_2)\in\mathcal{X}_k$ as in \eqref{ku9}
\begin{equation*}
\begin{split}
\|P_kI[e^{-it\Lambda_{kg,\iota_1}}V^{kg,\iota_1}_{\leq J,k_1}(t),P_{k_2}U^{kg,\iota_2}(t)]\|_{L^2}&\lesssim \|e^{-it\Lambda_{kg,\iota_1}}V^{kg,\iota_1}_{\leq J,k_1}(t)\|_{L^\infty}\|P_{k_2}U^{kg,\iota_2}(t)\|_{L^2}\\
&\lesssim\varep_1^22^{-k_1^-/2}2^{k_2^-}\langle t\rangle^{-3/2}2^{-4k_1^+}
\end{split} 
\end{equation*}
and
\begin{equation*}
\begin{split}
\|P_kI[e^{-it\Lambda_{kg,\iota_1}}V^{kg,\iota_1}_{>J,k_1}(t),P_{k_2}U^{kg,\iota_2}(t)]\|_{L^2}&\lesssim \|V^{kg,\iota_1}_{>J,k_1}(t)\|_{L^2}\|P_{k_2}U^{kg,\iota_2}(t)\|_{L^\infty}\\
&\lesssim\varep_1^22^{-k_1^-}2^{k_2^-/2}\langle t\rangle^{-2+2H(1)\delta}2^{-4k_1^+}.
\end{split} 
\end{equation*}
Therefore, if $(k_1,k_2)\in\mathcal{X}_k$ and $\langle t\rangle^{-1/2}2^{20}\leq 2^{k_2^-}\leq 2^{k_1^-}$ then  
\begin{equation}\label{ku15}
I^{wa}_{k,k_1,k_2}(t)\lesssim \varep_1^2\langle t\rangle^{-3/2}2^{k_2^-/4}2^{-4k_1^+},
\end{equation}
which suffices to prove the bounds \eqref{ku9}.

(ii) For \eqref{abc30.3} it suffices to prove that 
\begin{equation}\label{abc48.1}
\sum_{(k_1,k_2)\in\mathcal{X}_k}\|\varphi_k(\xi)(\partial_{\xi_l}\mathcal{F}\{I[P_{k_1}U^{kg,\iota_1}_{\mathcal{L}_1},P_{k_2}U^{kg,\iota_2}_{\mathcal{L}_2}]\})(\xi,t)\|_{L^2_\xi}\lesssim\varep_1^22^{k^-/2}\langle t\rangle^{H''_{wa}(n)\delta}2^{-N(n)k^++5k^+},
\end{equation}
for any $\iota_1,\iota_2\in\{+,-\}$, $l\in\{1,2,3\}$, $\mathcal{L}_1\in\mathcal{V}_{n_1}$, $\mathcal{L}_2\in\mathcal{V}_{n_2}$, $n_1+n_2\leq n$.

Without loss of generality we may assume that $n_1\leq n_2$. Recall that $U^{kg,\iota_1}_{\mathcal{L}_1}=e^{-it\Lambda_{kg,\iota_1}}V^{kg,\iota_1}_{\mathcal{L}_1}$ and $U^{kg,\iota_2}_{\mathcal{L}_2}=e^{-it\Lambda_{kg,\iota_2}}V^{kg,\iota_2}_{\mathcal{L}_2}$. We examine the formula \eqref{abc36}. The $\partial_{\xi_l}$ derivative can hit either the phase $e^{-it\Lambda_{kg,\iota_1}(\xi-\eta)}$, or the profile $\widehat{P_{k_1}V^{kg,\iota_1}_{\mathcal{L}_1}}(\xi-\eta)$, or the multiplier $m_1(\xi-\eta)$. In the first case, the $\partial_{\xi_l}$ derivative effectively corresponds to multiplying by a factor $\lesssim \langle t\rangle$, and changing $m_1$ in a way that still satisfies \eqref{abc9}. The corresponding bounds follow from \eqref{abc30}.

It remains to consider the case when the $\partial_{\xi_l}$ derivative hits the function $m_1(\xi-\eta)\varphi_{k_1}(\xi-\eta)\widehat{V^{kg,\iota_1}_{\mathcal{L}_1}}(\xi-\eta)$. It suffices to prove that 
\begin{equation}\label{abc48.2}
\sum_{(k_1,k_2)\in\mathcal{X}_k}\|P_kI^{wa}[U^{kg,\iota_1}_{\mathcal{L}_1,\ast l,k_1},P_{k_2}U^{kg,\iota_2}_{\mathcal{L}_2}](t)\|_{L^2}\lesssim\varep_1^22^{k^-/2}\langle t\rangle^{H''_{wa}(n)\delta}2^{-N(n)k^++5k^+},
\end{equation}
where $\widehat{U^{kg,\iota_1}_{\mathcal{L}_1,\ast l,k_1}}(\xi,t):=e^{-it\Lambda_{kg,\iota_1}(\xi)}\partial_{\xi_l}(\varphi_{k_1}\cdot m_1\cdot\widehat{V^{kg,\iota_1}_{\mathcal{L}_1}})(\xi,t)$. It follows from \eqref{abc21}--\eqref{abc21.5} that
\begin{equation}\label{abc48.3}
\begin{split}
&\|U^{kg,\iota_1}_{\mathcal{L}_1,\ast l,k_1}(t)\|_{L^2}\lesssim \varep_1\langle t\rangle^{H(n_1+1)\delta}2^{-N(n_1+1)k_1^+},\\
&\|P_{k_2}U^{kg,\iota_2}_{\mathcal{L}_2}(t)\|_{L^2}\lesssim \varep_1\langle t\rangle^{H(n_2)\delta}2^{-N(n_2)k_2^+}.
\end{split}
\end{equation}
Thus
\begin{equation}\label{ku12}
\begin{split}
\|P_kI^{wa}[U^{kg,\iota_1}_{\mathcal{L}_1,\ast l,k_1},&P_{k_2}U^{kg,\iota_2}_{\mathcal{L}_2}](t)\|_{L^2}\lesssim 2^{3\min(k_1,k_2,k)/2}\|U^{kg,\iota_1}_{\mathcal{L}_1,\ast l,k_1}(t)\|_{L^2}\|P_{k_2}U^{kg,\iota_2}_{\mathcal{L}_2}(t)\|_{L^2}\\
&\lesssim \varep_1^22^{3\min(k_1,k_2,k)/2}\langle t\rangle^{H(n_1+1)\delta+H(n_2)\delta}2^{-N(n_1+1)k_1^+}2^{-N(n_2)k_2^+}.
\end{split}
\end{equation}
This suffices to prove \eqref{abc48.2} if $(n_1,n_2)\neq (0,0)$, using \eqref{abc50.3} and the inequality $n_1+1\leq n$. 

On the other hand, if $(n_1,n_2)=(0,0)$ then we need to be more careful because of the loss of derivative and the slightly worse power of $\langle t\rangle$ in \eqref{ku12}. From the very beginning, in proving \eqref{abc48.1} we notice that we may assume that the sum is over pairs $(k_1,k_2)$ with $k_1\leq k_2$ (otherwise we make the change of variables $\eta\to\xi-\eta$ to move the $\xi$ derivative on the low frequency component). Using \eqref{abc23} and the $L^2$ bounds in the first line of \eqref{abc48.3}, we have
\begin{equation}\label{ku13}
\begin{split}
\|P_kI^{wa}[U^{kg,\iota_1}_{\ast l,k_1},&P_{k_2}U^{kg,\iota_2}](t)\|_{L^2}\lesssim \|U^{kg,\iota_1}_{\ast l,k_1}(t)\|_{L^2}\|P_{k_2}U^{kg,\iota_2}(t)\|_{L^\infty}\\
&\lesssim \varep_1^2\langle t\rangle^{2H(1)\delta-1}2^{-N(1)k_1^+}2^{-N(1)k_2^+}2^{2k_2^+}2^{k_2^-/2}.
\end{split}
\end{equation}
It is easy to see that the bounds \eqref{ku12}--\eqref{ku13} suffice to control the contribution of the pairs $(k_1,k_2)$ with $k_1\leq k_2$ in \eqref{abc48.2}. The desired bounds \eqref{abc48.1} thus follow when $(n_1,n_2)=(0,0)$. 
\end{proof}

\subsection{The nonlinearities $\mathcal{N}_\mathcal{L}^{kg}$} We prove now similar bounds for the nonlinearities $\mathcal{N}_\mathcal{L}^{kg}$. 

\begin{lemma}\label{dtv8}
Assume that $\mathcal{N}_{\mathcal{L}}^{kg}$ is as in \eqref{on6.1L}, $\mathcal{L}\in\mathcal{V}_{n}$, $n\in\{0,\ldots,N_1\}$, $t\in[0,T]$, and $k\in\mathbb{Z}$. 

(i) Then
\begin{equation}\label{abc31}
\|P_k\mathcal{N}_{\mathcal{L}}^{kg}(t)\|_{L^2}+\|P_k\partial_tV_{\mathcal{L}}^{kg}(t)\|_{L^2}\lesssim\varep_1^2\langle t\rangle^{H_{kg}''(n)\delta}\min(\langle t\rangle^{-1},2^{k^-})2^{-N(n+1)k^+-5k^+}.
\end{equation}
where $N(n+1):=N(n)-d$ if $n=N_1$ and
\begin{equation}\label{abc31.1}
H_{kg}''(0):=8,\qquad H_{kg}''(n):=H(n)+160\text{ for }n\in\{1,\ldots,N_1\}.
\end{equation}

(ii) Moreover, for $l\in\{1,2,3\}$ and $n\leq N_1-1$,
\begin{equation}\label{abc31.3}
\|P_k(x_l\mathcal{N}_{\mathcal{L}}^{kg})(t)\|_{L^2}\lesssim\varep_1^2\langle t\rangle^{H_{kg}''(n)\delta}2^{-N(n+1)k^+-5k^+}.
\end{equation}
\end{lemma}

\begin{proof} (i) With $I$ defined as in \eqref{abc9}--\eqref{abc36}, for \eqref{abc31} it suffices to prove that
\begin{equation}\label{abc60}
\begin{split}
&\sum_{(k_1,k_2)\in\mathcal{X}_k}I^{kg}_{k,k_1,k_2}(t)\lesssim\varep_1^2\langle t\rangle^{H_{kg}''(n)\delta}\min(\langle t\rangle^{-1},2^{k^-})2^{-N(n+1)k^+-5k^+},\\
&I^{kg}_{k,k_1,k_2}(t):=2^{k_1^+-k_2}\|P_kI[P_{k_1}U^{kg,\iota_1}_{\mathcal{L}_1},P_{k_2}U^{wa,\iota_2}_{\mathcal{L}_2}](t)\|_{L^2},
\end{split}
\end{equation}
for any $\iota_1,\iota_2\in\{+,-\}$, $\mathcal{L}_1\in\mathcal{V}_{n_1}$, $\mathcal{L}_2\in\mathcal{V}_{n_2}$, $n_1+n_2\leq n$. 

We estimate first, using just \eqref{abc21},
\begin{equation}\label{gb33}
\begin{split}
\sum_{(k_1,k_2)\in\mathcal{X}_k}I^{kg}_{k,k_1,k_2}(t)&\lesssim\sum_{(k_1,k_2)\in\mathcal{X}_k}2^{k_1^+-k_2}2^{3\min(k,k_1,k_2)/2}\|P_{k_1}U_{\mathcal{L}_1}^{kg}(t)\|_{L^2}\|P_{k_2}U_{\mathcal{L}_2}^{wa}(t)\|_{L^2}\\
&\lesssim\varep_1^2\langle t\rangle^{\delta(H(n_1)+H(n_2)+1)}2^{k^-}.
\end{split}
\end{equation}
Since $H(n_1)+H(n_2)+2\leq H_{kg}''(n)$, this suffices to prove the bounds \eqref{abc60} when $2^{k^-}\lesssim\langle t\rangle^{-1}$. 

On the other hand, if $2^{k^-}\geq \langle t\rangle^{-1}$ then we estimate, using also \eqref{abc23.2}--\eqref{abc23}, 
\begin{equation}\label{gb34}
\begin{split}
\sum_{(k_1,k_2)\in\mathcal{X}_k,\,k_1\geq k-20}I^{kg}_{k,k_1,k_2}(t)&\lesssim\sum_{(k_1,k_2)\in\mathcal{X}_k,\,k_1\geq k-20}2^{k_1^+-k_2}\|P_{k_1}U_{\mathcal{L}_1}^{kg}(t)\|_{L^2}\|P_{k_2}U_{\mathcal{L}_2}^{wa}(t)\|_{L^\infty}\\
&\lesssim\varep_1^2\langle t\rangle^{-1+\delta(H(n_1)+H(n_2+1)+2)}2^{-N(n_1)k^++2k^+}
\end{split}
\end{equation}
if $n_2\leq N_1-1$, and 
\begin{equation}\label{gb35}
\begin{split}
\sum_{(k_1,k_2)\in\mathcal{X}_k,\,k_2\geq k-20}I^{kg}_{k,k_1,k_2}(t)&\lesssim\sum_{(k_1,k_2)\in\mathcal{X}_k,\,k_2\geq k-20}2^{k_1^+-k_2}\|P_{k_1}U_{\mathcal{L}_1}^{kg}(t)\|_{L^\infty}\|P_{k_2}U_{\mathcal{L}_2}^{wa}(t)\|_{L^2}\\
&\lesssim\varep_1^2\langle t\rangle^{-1+\delta(H(n_1+1)+H(n_2)+2)}2^{-N(n_2)k^++2k^+}
\end{split}
\end{equation}
if $n_1\leq N_1-1$. The desired bounds \eqref{abc60} follow from \eqref{gb34}--\eqref{gb35} and \eqref{abc50.3}, unless $n_1=0$ or $n_2=0$. We consider separately these remaining cases.
\medskip

{\bf{Case 1.}} Assume first that $2^{k^-}\geq \langle t\rangle^{-1}$, $n=n_1\geq 1$, and $n_2=0$. The bound \eqref{gb34} still gives suitable control of the sum over $k_1\geq k-20$. Estimating as in \eqref{gb33} it is easy to see that
\begin{equation*}
\sum_{(k_1,k_2)\in\mathcal{X}_k,\,k_1\leq k-20,\,2^{k_1}\leq \langle t\rangle^{-2}}I^{kg}_{k,k_1,k_2}(t)\lesssim\varep_1^2\langle t\rangle^{-1}2^{-N(n_1)k^++2k^+}.
\end{equation*}
The contribution of the remaining pairs $(k_1,k_2)$ for which $\langle t\rangle^{-2}\leq 2^{k_1}\leq 2^{k-20}$ is also bounded as claimed since
\begin{equation}\label{gb35.1}
\begin{split}
I^{kg}_{k,k_1,k_2}(t)&\lesssim 2^{k_1^+-k_2}\|P_{k_1}U_{\mathcal{L}_1}^{kg}(t)\|_{L^2}\|P_{k_2}U^{wa}(t)\|_{L^\infty}\\
&\lesssim\varep_1^2\langle t\rangle^{-1+\delta(H(n_1)+H(1)+2)}2^{-N(1)k_2^++2k_2^+}2^{-k_1^+}.
\end{split}
\end{equation}

{\bf{Case 2.}} Assume now that $2^{k^-}\geq \langle t\rangle^{-1}$, $n_1=0$, and $n_2=n\geq 1$. The bound \eqref{gb35} still gives suitable control of the sum over $k_2\geq k-20$.  It remains to show that if $2^{k}\geq\langle t\rangle^{-1}$ then
\begin{equation}\label{abc60.1}
\sum_{(k_1,k_2)\in\mathcal{X}_k,\,k_2\leq k-10}2^{k_1^+-k_2}\|P_kI[P_{k_1}U^{kg,\iota_1},P_{k_2}U_{\mathcal{L}_2}^{wa,\iota_2}](t)\|_{L^2}\lesssim\varep_1^2\langle t\rangle^{H''(n_2)\delta-1}2^{-N(n_2)k^++5k^+}.
\end{equation}

To prove \eqref{abc60.1} we estimate first, when $k_2\leq k-10$ and $|k_1-k|\leq 4$,
\begin{equation}\label{gb35.2}
\begin{split}
\|P_kI[P_{k_1}U^{kg,\iota_1},P_{k_2}U_{\mathcal{L}_2}^{wa,\iota_2}](t)\|_{L^2}&\lesssim \|P_{k_1}U^{kg}(t)\|_{L^2}2^{3k_2/2}\|P_{k_2}U_{\mathcal{L}_2}^{wa}(t)\|_{L^2}\\
&\lesssim \varep_1^2\langle t\rangle^{H(1)\delta+H(n_2)\delta+2\delta}2^{k_1^-}2^{2k_2^-}2^{-N(0)k^+},
\end{split}
\end{equation}
using \eqref{abc21} and \eqref{abc22.1}. Therefore, since $N(0)-N(1)=40$,
\begin{equation*}
\begin{split}
\sum_{(k_1,k_2)\in\mathcal{X}_k,\,k_2\leq k-10,\,2^{k_1^-+k_2^-}\lesssim\langle t\rangle^{-1+145\delta}2^{40k^+}}2^{k_1^+-k_2}&\|P_kI[P_{k_1}U^{kg,\iota_1},P_{k_2}U_{\mathcal{L}_2}^{wa,\iota_2}](t)\|_{L^2}\\
&\lesssim\varep_1^2\langle t\rangle^{H''(n_2)\delta-1}2^{-N(1)k^++4k^+}.
\end{split}
\end{equation*}
For \eqref{abc60.1} it remains to prove that
\begin{equation}\label{abc83}
2^{-k_2}\|P_kI[P_{k_1}U^{kg,\iota_1},P_{k_2}U_{\mathcal{L}_2}^{wa,\iota_2}](t)\|_{L^2}\lesssim\varep_1^2\langle t\rangle^{H''(n_2)\delta-\delta-1}2^{-N(n_2)k^++4k^+}
\end{equation}
for any $k_1,k_2,k\in\mathbb{Z}$ such that 
\begin{equation}\label{abc84}
k_2\leq k-10,\qquad |k_1-k|\leq 4,\qquad 2^{k_1^-+k_2^-}\geq\langle t\rangle^{-1+145\delta}2^{40k^++80}.
\end{equation}

We set $2^J=2^{k_1^--30}\langle t\rangle$, decompose $P_{k_1}V^{kg,\iota_1}=V_{\leq J,k_1}^{kg,\iota_1}+V_{> J,k_1}^{kg,\iota_1}$, and recall the bounds \eqref{ku10}. Therefore, using also the $L^2$ bounds \eqref{abc21} 
\begin{equation*}
\begin{split}
2^{-k_2}\|P_kI[e^{-it\Lambda_{kg,\iota_1}}V_{\leq J,k_1}^{kg,\iota_1},&P_{k_2}U_{\mathcal{L}_2}^{wa,\iota_2})](t)\|_{L^2}\lesssim 2^{-k_2}\|e^{-it\Lambda_{kg,\iota_1}}V_{\leq J,k_1}^{kg,\iota_1}\|_{L^\infty}\|P_{k_2}U_{\mathcal{L}_2}^{wa,\iota_2}(t)\|_{L^2}\\
&\lesssim \varep_1^2\langle t\rangle^{-3/2+H(n_2)\delta}2^{-k_1^-/2}2^{-k_2^-/2}2^{-N(1)k^++16k^+},
\end{split} 
\end{equation*}
and
\begin{equation*}
\begin{split}
2^{-k_2}\|P_kI[e^{-it\Lambda_{kg,\iota_1}}V_{>J,k_1}^{kg,\iota_1},&P_{k_2}U_{\mathcal{L}_2}^{wa,\iota_2})](t)\|_{L^2}\lesssim 2^{-k_2}\|e^{-it\Lambda_{kg,\iota_1}}V_{> J,k_1}^{kg,\iota_1}\|_{L^2}\|P_{k_2}U_{\mathcal{L}_2}^{wa,\iota_2}(t)\|_{L^\infty}\\
&\lesssim \varep_1^2\langle t\rangle^{-1+10\delta+H(n_2)\delta}2^{-k_1^-}2^{k_2^-}2^{-N(1)k^+}.
\end{split} 
\end{equation*}
Since $2^{-k_1^-/2}2^{-k_2^-/2}\lesssim \langle t\rangle^{1/2-75\delta}2^{-20k^+}$ (see \eqref{abc84}), these bounds suffice to prove \eqref{abc83}.
\medskip

{\bf{Case 3.}} Finally, assume that $2^{k^-}\geq \langle t\rangle^{-1}$ and $n_1=n_2=n=0$. The bounds \eqref{gb34}--\eqref{gb35} are sufficient if $2^{k^+}\gtrsim \langle t\rangle^{2\delta}$. The contribution of the pairs $(k_1,k_2)$  in \eqref{abc60} for which $2^{\min(k_1,k_2)}2^{-30(k_1^++k_2^+)}\lesssim \langle t\rangle^{-1+5.9\delta}$ can be bounded as before using just $L^2$ estimates and recalling that $N(1)=N(0)-40$. For \eqref{abc60} it remains to show that 
\begin{equation}\label{abc60.2}
2^{k_1^+-k_2}\|P_kI[P_{k_1}U^{kg,\iota_1},P_{k_2}U^{wa,\iota_2}](t)\|_{L^2}\lesssim\varep_1^2\langle t\rangle^{7.9\delta-1}2^{-N(1)k^+-5k^+},
\end{equation}
provided that
\begin{equation}\label{ku30}
2^k\in[\langle t\rangle^{-1},\langle t\rangle^{2\delta}],\qquad 2^{\min(k_1,k_2)}2^{-30(k_1^++k_2^+)}\geq \langle t\rangle^{-1+5.9\delta}2^{40}.
\end{equation} 

We decompose 
\begin{equation}\label{on11.30}
P_{k_1}U^{kg,\iota_1}(t)=\sum_{j_1\geq -k_1^-}e^{-it\Lambda_{kg,\iota_1}}V_{j_1,k_1}^{kg,\iota_1}(t),\qquad P_{k_2}U^{wa,\iota_2}(t)=\sum_{j_2\geq -k_2^-}e^{-it\Lambda_{wa,\iota_2}}V_{j_2,k_2}^{wa,\iota_2}(t)
\end{equation}
as in \eqref{abc20.5}. Notice that
\begin{equation}\label{ku31}
\begin{split}
&\mathcal{F}\big\{P_kI[e^{-it\Lambda_{kg,\iota_1}}V_{j_1,k_1}^{kg,\iota_1},e^{-it\Lambda_{wa,\iota_2}}V_{j_2,k_2}^{wa,\iota_2}]\big\}(\xi,t)\\
&=C\varphi_k(\xi)\int_{\mathbb{R}^3}e^{-it\Lambda_{kg,\iota_1}(\xi-\eta)-it\Lambda_{wa,\iota_2}(\eta)}m_1(\xi-\eta)\widehat{V_{j_1,k_1}^{kg,\iota_1}}(\xi-\eta,t)\cdot m_2(\eta)\widehat{V_{j_2,k_2}^{wa,\iota_2}}(\eta,t)\,d\eta.
\end{split}
\end{equation} 
The main observation is that the absolute value of the $\eta$ gradient of the phase function $\eta\to t[\Lambda_{kg,\iota_1}(\xi-\eta)+\Lambda_{wa,\iota_2}(\eta)]$ is bounded from below by $c|t|2^{-2k_1^+}$ in the support of the integral. Recalling also \eqref{ku30}, we can thus use Lemma \ref{tech5} to show that the contribution of the pairs $(j_1,j_2)$ for which $2^{\max(j_1,j_2)}\leq\langle t\rangle^{1-\delta/2}2^{-2k_1^+}$ is negligible, i.e. 
\begin{equation}\label{ku40}
2^{k_1^+-k_2}\|P_kI^{kg}[e^{-it\Lambda_{kg,\iota_1}}V_{j_1,k_1}^{kg,\iota_1}(t),e^{-it\Lambda_{wa,\iota_2}}V_{j_2,k_2}^{wa,\iota_2}(t)]\|_{L^2}\lesssim\varep_1^2\langle t\rangle^{-2}.
\end{equation}

To deal with the remaining pairs $(j_1,j_2)$ we fix $J_1$ such that $2^{J_1}=\langle t\rangle^{200\delta}$ and estimate
\begin{equation}\label{ku41}
\begin{split}
2^{k_1^+-k_2}&\|P_kI[e^{-it\Lambda_{kg,\iota_1}}V_{> J_1,k_1}^{kg,\iota_1}(t),P_{k_2}U^{wa,\iota_2}(t)]\|_{L^2}\lesssim 2^{k_1^+-k_2}\|V_{>J_1,k_1}^{kg,\iota_1}(t)\|_{L^2}\|P_{k_2}U^{wa,\iota_2}(t)\|_{L^\infty}\\
&\lesssim \varep_1^22^{-J_1}\langle t\rangle^{-1+2H(1)\delta+\delta}2^{-N(1)k_1^++4k_1^+}2^{-N(1)k_2^++4k_2^+},
\end{split}
\end{equation}
using \eqref{abc22} and \eqref{abc23.2}. This is consistent with the desired bounds \eqref{abc60.2}, due to the choice of $J_1$. For the remaining contribution (which is nontrivial only when $2^{k_1}\gtrsim\langle t\rangle^{-200\delta}$) we fix $J_2$ such that $2^{J_2}=\langle t\rangle^{1-\delta/2}2^{-2k_1^+}$ and prove two bounds. We have
\begin{equation}\label{ku42}
\begin{split}
2^{k_1^+-k_2}&\|P_kI[e^{-it\Lambda_{kg,\iota_1}}V_{\leq J_1,k_1}^{kg,\iota_1}(t),e^{-it\Lambda_{wa,\iota_2}}V_{> J_2,k_2}^{wa,\iota_2}(t)]\|_{L^2}\\
&\lesssim 2^{k_1^+-k_2}2^{3k_2/2}\|V_{\leq J_1,k_1}^{kg,\iota_1}(t)\|_{L^2}\|V_{> J_2,k_2}^{wa,\iota_2}(t)\|_{L^2}\\
&\lesssim \varep_1^2\langle t\rangle^{H(1)\delta+\delta}2^{k_1^-+k_2^-}2^{-N(0)k_1^++2k_1^+}2^{-N(0)k_2^++2k_2^+},
\end{split}
\end{equation}
using just the $L^2$ bounds \eqref{abc21} and \eqref{abc22.1}. The bounds \eqref{ku40}--\eqref{ku42} suffice to prove \eqref{abc60.2} if \eqref{ku30} holds and, in addition, $2^{k_1^-+k_2^-}\lesssim \langle t\rangle^{-1-3.5\delta}$. On the other hand, we also have
\begin{equation}\label{ku43}
\begin{split}
&2^{k_1^+-k_2}\|P_kI[e^{-it\Lambda_{kg,\iota_1}}V_{\leq J_1,k_1}^{kg,\iota_1}(t),e^{-it\Lambda_{wa,\iota_2}}V_{> J_2,k_2}^{wa,\iota_2}(t)]\|_{L^2}\\
&\lesssim 2^{k_1^+-k_2}\|e^{-it\Lambda_{kg,\iota_1}}V_{\leq J_1,k_1}^{kg,\iota_1}(t)\|_{L^\infty}\|V_{> J_2,k_2}^{wa,\iota_2}(t)\|_{L^2}\\
&\lesssim \varep_1^22^{k_1^+-k_2}\langle t\rangle^{-3/2}2^{-k_1^-/2}2^{-N_0k_1^++(d'+6)k_1^+}\cdot 2^{-k_2/2}\langle t\rangle^{-1+\delta/2}2^{2k_1^+}\langle t\rangle^{H(1)\delta}2^{-N(1)k_2^+},
\end{split}
\end{equation}
using \eqref{abc22} and \eqref{vcx5}. If $2^{k_1^-+k_2^-}\gtrsim \langle t\rangle^{-1-3.5\delta}$ then the right-hand side of \eqref{ku43} is bounded by
\begin{equation*}
C\varep_1^2\langle t\rangle^{-2+12.5\delta}2^{-k_2^-}2^{-N(1)k_1^++d'k_1^+}2^{-N(1)k_2^+}.
\end{equation*}
Since $2^{-k_2^-}\lesssim \langle t\rangle^{1-5.9\delta}2^{-28(k_1^++k_2^+)}$ (see \eqref{ku30}) this is consistent with the desired bound \eqref{abc60.2}. The conclusion follows in the remaining case $2^{k_1^-+k_2^-}\gtrsim \langle t\rangle^{-1-3.5\delta}$.

(ii) With the same notation as before, it suffices to prove that
\begin{equation}\label{abc99.1}
\begin{split}
\sum_{(k_1,k_2)\in\mathcal{X}_k}2^{k_1^+-k_2}\|\varphi_k(\xi)(\partial_{\xi_l}\mathcal{F}\{I[P_{k_1}&U^{kg,\iota_1}_{\mathcal{L}_1},P_{k_2}U^{wa,\iota_2}_{\mathcal{L}_2}]\})(\xi,t)\|_{L^2_\xi}\\
&\lesssim\varep_1^2\langle t\rangle^{H_{kg}''(n)\delta}2^{-N(n+1)k^+-5k^+},
\end{split}
\end{equation}
for any $\iota_1,\iota_2\in\{+,-\}$, $l\in\{1,2,3\}$, $\mathcal{L}_1\in\mathcal{V}_{n_1}$, $\mathcal{L}_2\in\mathcal{V}_{n_2}$, $n_1+n_2\leq n$. 

We write $U^{kg,\iota_1}_{\mathcal{L}_1}=e^{-it\Lambda_{kg,\iota_1}}V^{kg,\iota_1}_{\mathcal{L}_1}$ and notice that the $\partial_{\xi_l}$ derivative can hit either the phase $e^{-it\Lambda_{kg,\iota_1}(\xi-\eta)}$, or the multiplier $m_1(\xi-\eta)$, or the profile $\widehat{P_{k_1}V^{kg,\iota_1}_{\mathcal{L}_1}}(\xi-\eta)$. In the first case the derivative effectively corresponds to multiplying by factors $\lesssim \langle t\rangle$ or $\lesssim 2^{-k_1^-}$, and changing the multiplier $m_1$, in a way that still satisfies \eqref{abc9}. The desired estimates follow from \eqref{abc60}.

It remains to consider the case when the $\partial_{\xi_l}$ derivative hits the function $m_1(\xi-\eta)\widehat{P_{k_1}V^{kg,\iota_1}_{\mathcal{L}_1}}(\xi-\eta)$. It suffices to prove that 
\begin{equation}\label{abc99.2}
\sum_{(k_1,k_2)\in\mathcal{X}_k}2^{k_1^+-k_2}\|P_kI[U^{kg,\iota_1}_{\mathcal{L}_1,\ast l,k_1},P_{k_2}U^{wa,\iota_2}_{\mathcal{L}_2}](t)\|_{L^2}\lesssim\varep_1^2\langle t\rangle^{H_{kg}''(n)\delta}2^{-N(n+1)k^+-5k^+},
\end{equation}
where, as in the proof of Lemma \ref{dtv7}, $\widehat{U^{kg,\iota_1}_{\mathcal{L}_1,\ast l,k_1}}(\xi,t)=e^{-it\Lambda_{kg,\iota_1}(\xi)}\partial_{\xi_l}\{\varphi_{k_1}\cdot m_1\cdot\widehat{V^{kg,\iota_1}_{\mathcal{L}_1}}\}(\xi,t)$. The estimates \eqref{abc99.2} follow from \eqref{abc48.3} and \eqref{abc21} if $n_2\geq 1$, using an estimate similar to \eqref{gb33}.

Assume now that 
\begin{equation}\label{abc99.4}
n_1=n\qquad\text{ and }\qquad n_2=0.
\end{equation}
We need to be slightly more careful than before. If $2^{k}\lesssim \langle t\rangle^{1/100}$ then we can just use the $L^\infty$ bounds \eqref{abc23.2} and the $L^2$ bounds \eqref{abc48.3} to prove \eqref{abc99.2}. On the other hand, if $2^{k-10}\geq \langle t\rangle^{1/100}$ then the contribution of the pairs $(k_1,k_2)$ with $k_2\geq k-10$ or $k_2\leq -6k$ can be estimated as before, using just $L^2$ bounds. 

To estimate the contribution of the remaining pairs, we need to avoid derivative loss. We go back to \eqref{abc99.1}. It remains to prove that if $2^{k-10}\geq \langle t\rangle^{1/100}$ then
\begin{equation}\label{abc99.6}
\begin{split}
\sum_{(k_1,k_2)\in\mathcal{X}_k,\,k_2\in[-6k,k-10]}2^{k_1^+-k_2}\|\varphi_k(\xi)(\partial_{\xi_l}\mathcal{F}\{I[P_{k_1}&U^{kg,\iota_1}_{\mathcal{L}_1},P_{k_2}U^{wa,\iota_2}]\})(\xi,t)\|_{L^2_\xi}\\
&\lesssim\varep_1^2\langle t\rangle^{H_{kg}''(n)\delta}2^{-N(n+1)k^+-5k^+}.
\end{split}
\end{equation}
We make the change of variables $\eta\to\xi-\eta$ in the integral \eqref{abc36}, to move the $\partial_{\xi_l}$ derivative to the low frequency factor. Using \eqref{abc21} and $L^2\times L^\infty$ estimates as before, for \eqref{abc99.6} it suffices to prove that
\begin{equation}\label{abc99.7}
\big\|\mathcal{F}^{-1}\{\partial_{\xi_l}[m_2(\xi)\widehat{P_{k_2}U^{wa}}(\xi,t)]\}\big\|_{L^\infty}\lesssim \varep_1\langle t\rangle^{2H(1)\delta}2^{-2k_2^+}2^{k_2^--2\delta k_2^-}.
\end{equation}
We remark that the loss of a factor of $\langle t\rangle ^{C\delta}$ is mitigated by the gain of derivative and the assumption $2^{k}\gtrsim\langle t\rangle^{1/100}$. 

It remains to prove \eqref{abc99.7}. The derivative $\partial_{\xi_l}$ can hit the symbol $m_2$, and this contribution is bounded easily using \eqref{abc23.2}. On the other hand, if $\partial_{\xi_l}$ hits the function $\widehat{P_{k_2}U^{wa}}(\xi,t)$ then we replace $U^{wa}(t)$ with $e^{-it|\nabla|}V^{wa}(t)$. Notice that
\begin{equation*}
\big\|\mathcal{F}^{-1}\{\partial_{\xi_l}[e^{-it|\xi|}\varphi_{k_2}(\xi)\widehat{V^{wa}}(\xi)]\}\big\|_{L^\infty}\lesssim \varep_1\langle t\rangle^{H(1)\delta+2\delta}2^{-4k_2^+}2^{k_2^--\delta k_2^-},
\end{equation*}
as a consequence of \eqref{abc21.5} and \eqref{abc23.2}. The desired bound \eqref{abc99.7} follows in this case as well. This completes the proof of the lemma. 
\end{proof}

\subsection{The bounds \eqref{bootstrap3.1}--\eqref{bootstrap3.4} at time $t=0$} We use now the initial-data assumptions \eqref{ml1} and elliptic estimates to take the first step towards proving Proposition \ref{bootstrap}. 

\begin{proposition}\label{IniDat}
The bounds \eqref{bootstrap3.1}--\eqref{bootstrap3.4} hold at time $t=0$.
\end{proposition}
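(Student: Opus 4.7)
My approach is to express $U^{wa}_{\mathcal{L}}(0)$ and $U^{kg}_{\mathcal{L}}(0)$ directly in terms of the initial data $(u_0,\dot u_0,v_0,\dot v_0)$ via the equations \eqref{on1}, and then invoke the hypothesis \eqref{ml1}. Since at $t=0$ the Lorentz boost $\Gamma_j=x_j\partial_t+t\partial_j$ acts on any spacetime function as $x_j\partial_t$, an induction on the length of $\mathcal{L}\in\mathcal{V}_n$ gives the identity
\begin{equation*}
(\mathcal{L}f)(x,0)=\sum_{\substack{|\beta|+\alpha_0\le n\\ |\beta'|\le|\beta|+\alpha_0}} c_{\beta',\beta,\alpha_0}\,x^{\beta'}\partial^\beta\partial_t^{\alpha_0}f(x,0).
\end{equation*}
Iterating $\partial_t^2u=\Delta u+A^{\alpha\beta}\partial_\alpha v\,\partial_\beta v+Dv^2$ and $\partial_t^2v=\Delta v-v+uB^{\alpha\beta}\partial_\alpha\partial_\beta v$ converts every time derivative of order $\ge2$ into a spatial derivative of $u_0,\dot u_0,v_0,\dot v_0$ plus polynomial nonlinearities; since the substitution $\partial_t^2\mapsto\Delta$ preserves the total derivative count, the linear part of $(\mathcal{L}u)(0)$ (resp.\ $(\mathcal{L}v)(0)$) is a finite sum of terms $x^{\beta'}\partial^\gamma u_0$ or $x^{\beta'}\partial^\gamma\dot u_0$ (resp.\ with $v_0,\dot v_0$) with $|\gamma|\le n$ and $|\beta'|\le n$.

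To prove \eqref{bootstrap3.1} I combine these expansions for $\partial_t(\mathcal{L}u)(0)$ and $\Lambda_{wa}(\mathcal{L}u)(0)$ and regroup using the identity $|\nabla|^{-1/2}U^{wa}_0=|\nabla|^{-1/2}\dot u_0-i|\nabla|^{1/2}u_0$: the linear part of $|\nabla|^{-1/2}U^{wa}_{\mathcal{L}}(0)$ becomes a finite sum of $x^{\beta'}\partial^\gamma(|\nabla|^{-1/2}U^{wa}_0)$ together with conjugate terms, modulo commutators $[|\nabla|^{\pm 1/2},x^{\beta'}]$ which are pseudodifferential operators of negative order and hence bounded on $H^{N(n)}$. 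Applying \eqref{ml1} at level $n$ controls these by $\varep_0$. The nonlinear remainder is a polynomial of degree $\ge 2$ in the initial data and their derivatives of total order $\le N(n)+n\le N(0)$; since $N(n)\ge N_0-dN_1=N_0\gg 3/2$, $H^{N(n)}$ is a Sobolev algebra, so Sobolev multiplication bounds this contribution by $C\varep_0^2\le\varep_0$. The argument for $U^{kg}_{\mathcal{L}}(0)$ is analogous, organized around $U^{kg}_0=\dot v_0-i\langle\nabla\rangle v_0$ and with the looser constraint $|\beta'|,|\beta|\le n$ coming from $H^{a,b}_{S,kg}$.

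For \eqref{bootstrap3.2}, since $V^\mu_{\mathcal{L}}(0)=U^\mu_{\mathcal{L}}(0)$ and $\|\varphi_k\partial_{\xi_l}\widehat{V^\mu_{\mathcal{L}}}(0)\|_{L^2_\xi}\lesssim \|x_l P_kU^\mu_{\mathcal{L}}(0)\|_{L^2}+2^{-k}\|P_{[k-1,k+1]}U^\mu_{\mathcal{L}}(0)\|_{L^2}$, the only change is the extra factor $x_l$, which raises $|\beta'|$ by one; the restriction $n\le N_1-1$ gives access to \eqref{ml1} at level $n+1\le N_1$, supplying the missing weight. Finally for \eqref{bootstrap3.4} I feed the weighted $L^2$-estimates just obtained into Lemma \ref{hyt1}: the bound \eqref{hyt3.1} translates control of $\|\varphi_k\partial_{\xi_l}\widehat{V^{wa}}\|_{L^2}$ into control of $\sum_j 2^j\|Q_{jk}V^{wa}\|_{L^2}$ with the correct powers of $2^{k^\pm}$, while $\|\widehat{P_kV^\mu}\|_{L^\infty}\le\|P_kV^\mu\|_{L^1}\lesssim\|\langle x\rangle^{-2}\|_{L^2}\|\langle x\rangle^2 P_kV^\mu\|_{L^2}$ is bounded by the weighted $L^2$ norms at low frequencies and by Bernstein from the $H^{N(0)}$ control at high frequencies. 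The main technical obstacle is the bookkeeping in the algebraic reduction --- tracking how time derivatives of $u$ and $v$ produce weighted spatial derivatives of the data while respecting the asymmetric $|\beta'|\le|\beta|$ constraint that encodes the $|\nabla|^{-1/2}$ low-frequency structure of the wave component; the remaining analytic estimates (commutators, Sobolev multiplication, and the passage from weighted $L^2$ to the $Z$-norms via Lemma \ref{hyt1}) are routine given the abundant regularity and weights available in \eqref{ml1}.
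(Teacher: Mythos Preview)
Your approach to \eqref{bootstrap3.1} and \eqref{bootstrap3.2} is essentially the paper's, just organized differently: the paper works directly with the first-order equation $\partial_tU^\ast=-i\Lambda_\ast U^\ast+\mathcal{N}^\ast$ and inducts on the order of $\partial_0$ (see \eqref{IniDat6}--\eqref{IniDat7}), which sidesteps the regrouping in terms of $|\nabla|^{-1/2}U^{wa}_0$ and the associated commutators.

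There is, however, a genuine gap in your treatment of the $Z$-norm bounds \eqref{bootstrap3.4}. Two points. First, Lemma~\ref{hyt1} controls the $\ell^2$ quantity $B_k=\big(\sum_j 2^{2j}\|Q_{jk}f\|_{L^2}^2\big)^{1/2}$, not the $\ell^1$ sum $\sum_j 2^j\|Q_{jk}f\|_{L^2}$ that appears in the $Z_{wa}$ norm \eqref{sec5}; passing from $\ell^2$ to $\ell^1$ requires a second weight so that $2^{2j}\|Q_{jk}f\|_{L^2}$ is bounded pointwise in $j$, which is exactly what the paper supplies via the extra bound $2^{2j}2^{3k/2}\|Q_{jk}U^{wa}(0)\|_{L^2}\lesssim\varep_0$ for large $j$ (see \eqref{IniDat5}).

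Second, and more seriously, the estimate $\|\widehat{P_kV^{wa}}\|_{L^\infty}\le\|P_kV^{wa}\|_{L^1}\lesssim\|\langle x\rangle^2 P_kV^{wa}\|_{L^2}$ is too crude at low wave frequencies. Because of the asymmetric constraint $|\beta'|\le\gamma$ in \eqref{ml1.11}, the hypothesis yields only $\|\langle x\rangle^2 P_kU^{wa}_0\|_{L^2}\lesssim\varep_0 2^{-3k^-/2}$, whereas the $Z_{wa}$ norm requires $\|\widehat{P_kV^{wa}}\|_{L^\infty}\lesssim\varep_0 2^{-(1+4\delta)k^-}$; you are short by a factor of $2^{-k^-/2}$. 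The paper recovers this half-power by invoking the sharper linear bound \eqref{Linfty3.3}, which uses one rotation vector-field (the $H^{0,1}_\Omega$ norm) to convert angular regularity into the missing $2^{k^-/2}$; see \eqref{IniDat2}--\eqref{IniDat4}. For the Klein--Gordon profile this issue does not arise, since the unconstrained weights in $H^{a,b}_{S,kg}$ place $\widehat{U^{kg}_0}$ in $H^2_\xi\hookrightarrow L^\infty$ outright.
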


\begin{proof}  {\bf{Step 1.}} We prove first the $Z$ norm bounds \eqref{bootstrap3.4}. Notice that $U^{wa}_{\mathcal{L}}(0) = V^{wa}_{\mathcal{L}}(0)$ and $U^{kg}_{\mathcal{L}}(0) = V^{kg}_{\mathcal{L}}(0)$. It follows from \eqref{ml1.11} and \eqref{hyt4} that
\begin{equation}\label{IniDat2}
\Big[\sum_{(k,j)\in\mathcal{J}}2^{2N(1)k^+}2^k2^{2j}\|Q_{jk}U^{wa}(0)\|_{L^2}^2\Big]^{1/2}\lesssim\varep_0.
\end{equation}
Moreover $2^{2j}2^{3k/2}\|Q_{jk}U^{wa}(0)\|_{L^2}\lesssim \varep_0$ for $j\geq 10|k|+10$ as a consequence of \eqref{ml1}. Using also \eqref{IniDat2} it follows that, for any $k\in\mathbb{Z}$,
\begin{equation}\label{IniDat5}
\sum_{j\geq\max(-k,0)}2^{j}\|Q_{jk}U^{wa}(0)\|_{L^2}\lesssim 2^{-N(1)k^+}2^{-k/2}(1+|k|). 
\end{equation}
This gives the desired $Z$ norm control for the wave component. 

Similarly, using the assumptions \eqref{ml1} we have
\begin{equation}\label{ku50}
\|U^{kg}_0\|_{H^{N(0)}}+\sum_{|\beta|\leq 3}\|x^{\beta}U^{kg}_0\|_{H^{N(3)}}\lesssim\varep_0.
\end{equation}
In particular $\|P_kU^{kg}_0\|_{L^1}\lesssim\varep_0$, which gives $\|\widehat{P_kU^{kg}_0}\|_{L^\infty}\lesssim\varep_0$ for any $k\in\mathbb{Z}$. This suffices for $k\leq 0$. On the other hand, if $k\geq 0$ then it follows from \eqref{ku50} that
\begin{equation*}
\|P_kU^{kg}_0\|_{L^2}\lesssim\varep_02^{-N(0) k^+},\qquad \|\,|x|^3P_kU^{kg}_0\|_{L^2}\lesssim \varep_02^{-N(3) k^+}.
\end{equation*}
Thus $\|P_kU^{kg}_0\|_{L^1}\lesssim\varep_02^{-(N(0)+N(3))/2 k^+}$, which gives the desired control.

{\bf{Step 2.}} We consider now the high order Sobolev bounds in \eqref{bootstrap3.1}. It suffices to show that
\begin{equation}\label{IniDat6}
\begin{split}
&\sum_{|\beta'|\leq |\beta|+\beta_0-1\leq n}\||\nabla|^{-1/2}(x^{\beta'}\partial^{\beta}\partial_0^{\beta_0}u)(0)\|_{H^{N(n)}}+\sum_{|\beta'|,|\beta|+\beta_0-1\leq n}\|(x^{\beta'}\partial^{\beta}\partial_0^{\beta_0}v)(0)\|_{H^{N(n)}}\lesssim\varep_0,
\end{split}
\end{equation}
for any $n\in[0,N_1]$, where $x^{\beta'}=x_1^{\beta'_1}x_2^{\beta'_2}x_3^{\beta'_3}$ and $\partial^{\beta}=\partial_1^{\beta_1}\partial_2^{\beta_2}\partial_3^{\beta_3}$.

We prove \eqref{IniDat6} by induction over $\beta_0$. Notice that the bounds follow directly from \eqref{ml1.11} if $\beta_0=0$ or if $\beta_0=1$, by passing to the Fourier space. If $\beta_0\geq 2$ then we use the identities $\partial_t^2u=\Delta u+\mathcal{N}^{wa}$ and $\partial_t^2v=\Delta v-v+\mathcal{N}^{kg}$. The contribution of the linear components is bounded as claimed, due to the induction hypothesis. For \eqref{IniDat6} it remains to prove that, for $n\in[0,N_1]$,
\begin{equation}\label{IniDat7}
\begin{split}
&\sum_{|\beta'|\leq |\beta|+\gamma\leq n,\,1\leq\gamma\leq\beta_0-1}\||\nabla|^{-1/2}(x^{\beta'}\partial^{\beta}\partial_0^{\gamma-1}\mathcal{N}^{wa})(0)\|_{H^{N(n)}}\lesssim \varep_0,\\
&\sum_{|\beta'|,|\beta|+\gamma\leq n,\,1\leq\gamma\leq\beta_0-1}\|(x^{\beta'}\partial^{\beta}\partial_0^{\gamma-1}\mathcal{N}^{kg})(0)\|_{H^{N(n)}}\lesssim\varep_0.
\end{split}
\end{equation} 

These bounds follow easily using the induction hypothesis \eqref{IniDat6} and the explicit formulas. For the first inequality, recall that $\mathcal{N}^{wa}=A^{\al\be}\partial_\al v\partial_\be v+Dv^2$. Therefore $x^{\beta'}\partial^{\beta}\partial_0^{\gamma-1}\mathcal{N}^{wa}$ can be written as a sum of terms of the form
\begin{equation*}
x^{\beta'} (\partial^{\beta_1}\partial_0^{\gamma_1}v)(\partial^{\beta_2}\partial_0^{\gamma_2}v),
\end{equation*}
where $|\beta'|\leq n$, $|\beta_1|+\gamma_1+|\beta_2|+\gamma_2\leq n+1$, and $\max(\gamma_1,\gamma_2)\leq \beta_0-1$. Such products can be easily bounded in $H^{N(n)}$, using the induction hypothesis and Littlewood--Paley decompositions as before, and placing the high frequency component in $L^2$ and the low frequency component in $L^\infty$. The contribution at low frequencies can be estimated using just $L^2$ bounds on both components. 

The proof of the inequality in the second line of \eqref{IniDat7} is similar. We use the formula $\mathcal{N}^{kg}=uB^{\alpha\beta}\partial_\al\partial_\be v+Euv$. Since $B^{00}=0$, one can distribute the $\partial^{\beta}\partial_0^{\gamma-1}$ derivatives and still get only terms with no more than $\beta_0-1$ time derivatives, of the form
\begin{equation*}
x^{\beta'} (\partial^{\beta_1}\partial_0^{\gamma_1}u)(\partial^{\beta_2}\partial_0^{\gamma_2}v),
\end{equation*}
where $|\beta'|\leq n$, $|\beta_1|+\gamma_1+|\beta_2|+\gamma_2\leq n+1$, $|\beta_1|+\gamma_1\leq n-1$, and $\max(\gamma_1,\gamma_2)\leq \beta_0-1$. Such products can be bounded in $H^{N(n)}$, using the induction hypothesis and Littlewood--Paley decompositions. This completes the proof of  \eqref{IniDat}.

{\bf{Step 3.}} We consider now the Sobolev bounds \eqref{bootstrap3.2} on the profiles $V^{wa}_\mathcal{L}$ and $V^{kg}_\mathcal{L}$. Since $V^{wa}_\mathcal{L}(0)=U^{wa}_\mathcal{L}(0)$ and $V^{kg}_\mathcal{L}(0)=U^{kg}_\mathcal{L}(0)$ it suffices to prove that
\begin{equation*}
\begin{split}
2^{k/2}\|\varphi_k(\xi)\partial_{\xi_l}&\mathcal{F}\{(\partial_t-i\Lambda_{wa})\mathcal{L}u\}(\xi,0)\|_{L^2}\\
&+2^{k^+}\|\varphi_k(\xi)\partial_{\xi_l}\mathcal{F}\{(\partial_t-i\Lambda_{kg})\mathcal{L}v\}(\xi,0)\|_{L^2}\lesssim \varep_02^{-N(n+1)k^+},
\end{split}
\end{equation*}
for $l\in\{1,2,3\}$, $k\in\mathbb{Z}$, $\mathcal{L}\in\mathcal{V}_n$, $n\in[0,N_1-1]$. These bounds follow again from \eqref{IniDat6}, after passing to the Fourier space. 
\end{proof}

\section{Energy estimates}\label{enerEst}

In this section we prove the energy bounds in \eqref{bootstrap3.1}. The vector-fields $\Gamma_j$ and $\Omega_{jk}$ commute with the wave operator, so, as a consequence of \eqref{on1}, for any $\mathcal{L}\in\mathcal{V}_{N_1}$,
\begin{equation}\label{abc2}
\begin{split}
-\square (\mathcal{L}u)&=\mathcal{L}(A^{\alpha\beta}\partial_\alpha v\partial_\beta v+Dv^2),\\
(-\square +1)(\mathcal{L}v)&=\mathcal{L}(uB^{\alpha\beta}\partial_\alpha\partial_\beta v+Euv).
\end{split}
\end{equation}

\subsection{The bound on $U^{wa}_{\mathcal{L}}$} We start by estimating the wave component.

\begin{proposition}\label{BootstrapEE4}
With the notation and hypothesis in Proposition \ref{bootstrap}, for any $t\in[0,T]$, $n\in[0,N_1]$ and $\mathcal{L}\in\mathcal{V}_n$
\begin{equation}\label{abc1}
\|\,|\nabla|^{-1/2}U^{wa}_{\mathcal{L}}(t)\|_{H^{N(n)}}\lesssim\varep_{0}\langle t\rangle^{H(n)\delta}.
\end{equation}
\end{proposition}

\begin{proof} 
With $P:=|\nabla|^{-1/2}\langle\nabla\rangle^{N(n)}\mathcal{L}$ we define the energy functional
\begin{equation}\label{abc3}
\mathcal{E}_{wa}^{\mathcal{L}}(t)=\int_{\mathbb{R}^3}\big[(\partial_0Pu(t))^2+\sum_{j=1}^3(\partial_jPu(t))^2\big]\,dx.
\end{equation}
Using the first equation in \eqref{abc2} we calculate
\begin{equation}\label{abc3.1}
\begin{split}
\frac{d}{dt}&\mathcal{E}_{wa}^{\mathcal{L}}(t)=2\int_{\mathbb{R}^3}P[A^{\al\be}\partial_\al v\partial_\be v+Dv^2](t)\cdot \partial_0Pu(t)\,dx\\
&=2\int_{\mathbb{R}^3}|\nabla|^{-1/2}\langle\nabla\rangle^{N(n)}\mathcal{L}[A^{\al\be}\partial_\al v\partial_\be v+Dv^2](t)\cdot |\nabla|^{-1/2}\langle\nabla\rangle^{N(n)}\partial_0\mathcal{L}u(t)\,dx\\
&=C\int_{\mathbb{R}^3}|\xi|^{-1}(1+|\xi|^2)^{N(n)}\mathcal{F}\{\mathcal{L}[A^{\al\be}\partial_\al v\partial_\be v+Dv^2]\}(\xi,t)\cdot\overline{\widehat{\partial_0\mathcal{L}u}(\xi,t)}\,d\xi.
\end{split}
\end{equation}

We decompose time integrals into dyadic pieces. More precisely, given $t\in[0,T]$, we fix a suitable decomposition of the function $\mathbf{1}_{[0,t]}$, i.e. we fix functions $q_0,\ldots,q_{L+1}:\mathbb{R}\to[0,1]$, $|L-\log_2(2+t)|\leq 2$, with the properties
\begin{equation}\label{nh2}
\begin{split}
&\mathrm{supp}\,q_0\subseteq [0,2], \quad \mathrm{supp}\,q_{L+1}\subseteq [t-2,t],\quad\mathrm{supp}\,q_m\subseteq [2^{m-1},2^{m+1}]\text{ for } m\in\{1,\ldots,L\},\\
&\sum_{m=0}^{L+1}q_m(s)=\mathbf{1}_{[0,t]}(s),\qquad q_m\in C^1(\mathbb{R})\text{ and }\int_0^t|q'_m(s)|\,ds\lesssim 1\text{ for }m\in \{1,\ldots,L\}.
\end{split}
\end{equation}

Let $I_m$ denote the support of $q_m$. In view of \eqref{abc3.1} and \eqref{abc4}, it suffices to prove that
\begin{equation*}
\Big|\int_{I_m}\int_{\mathbb{R}^3}q_m(s)|\xi|^{-1}(1+|\xi|^2)^{N(n)}\mathcal{F}\{\partial \mathcal{L}_1v\cdot\partial'\mathcal{L}_2v\}(\xi,s)\cdot\overline{\widehat{\partial_0\mathcal{L}u}(\xi,s)}\,d\xi ds\Big|\lesssim \varep_1^32^{2H(n)\delta m},
\end{equation*}
for any $t\in[0,T]$, $m\in\{0,\ldots,L+1\}$, $\partial,\partial'\in\{I,\partial_0,\partial_1,\partial_2,\partial_3\}$, $\mathcal{L}_1\in\mathcal{V}_{n_1}$, $\mathcal{L}_2\in\mathcal{V}_{n_2}$, $n_1+n_2\leq n$. We rewrite the functions $\mathcal{L}_1v$, $\mathcal{L}_2v$, $\mathcal{L}u$ in terms of the variables $U^{kg,\pm}_{\mathcal{L}_1}$, $U_{\mathcal{L}_2}^{kg,\pm}$, $U_{\mathcal{L}}^{wa,\pm}$, as in \eqref{on5}. It suffices to prove that, for any $\iota,\iota_1,\iota_2\in\{+,-\}$,
\begin{equation}\label{abc6.1}
\begin{split}
\Big|\int_{I_m}\int_{\mathbb{R}^3\times\mathbb{R}^3}&q_m(s)|\xi|^{-1}(1+|\xi|^2)^{N(n)}m_1(\xi-\eta)m_2(\eta)\\
&\times\widehat{U^{kg,\iota_1}_{\mathcal{L}_1}}(\xi-\eta,s)\widehat{U^{kg,\iota_2}_{\mathcal{L}_2}}(\eta,s)\overline{\widehat{U^{wa,\iota}_{\mathcal{L}}}(\xi,s)}\,d\xi d\eta ds\Big|\lesssim \varep_1^32^{2H(n)\delta m},
\end{split}
\end{equation}
where $m_1$ and $m_2$ are symbols satisfying \eqref{abc9}.

We further decompose dyadically in frequency. For any $k,k_1,k_2\in\mathbb{Z}$ let
\begin{equation}\label{abc11}
\begin{split}
I_{m;k,k_1,k_2}:=\int_{I_m}\int_{\mathbb{R}^3\times\mathbb{R}^3}&q_m(s)m_3(\xi)m_1(\xi-\eta)m_2(\eta)\\
&\times\widehat{P_{k_1}U^{kg,\iota_1}_{\mathcal{L}_1}}(\xi-\eta,s)\widehat{P_{k_2}U_{\mathcal{L}_2}^{kg,\iota_2}}(\eta,s)\overline{\widehat{P_kU^{wa,\iota}_{\mathcal{L}}}(\xi,s)}\,d\xi d\eta ds,
\end{split}
\end{equation}
where $m_3$ is also a multiplier as in \eqref{abc9}. For \eqref{abc6.1} it remains to prove that if $n\leq N_1$, $n_1+n_2\leq n$, $\mathcal{L}_1\in\mathcal{V}_{n_1},\mathcal{L}_2\in\mathcal{V}_{n_2}$ then
\begin{equation}\label{abc12}
\sum_{k,k_1,k_2\in\mathbb{Z}}2^{-k}2^{2N(n)k^+}|I_{m;k,k_1,k_2}|\lesssim \varep_1^32^{2H(n)\delta m}
\end{equation}
for any $\iota,\iota_1,\iota_2\in\{+,-\}$, $t\in[0,T]$, $m\in\{0,\ldots,L+1\}$. 

In certain cases we need to integrate by parts in time. For this we write
\begin{equation*}
\begin{split}
\widehat{P_{k_1}U^{kg,\iota_1}_{\mathcal{L}_1}}&(\xi-\eta,s)\widehat{P_{k_2}U^{kg,\iota_2}_{\mathcal{L}_2}}(\eta,s)\overline{\widehat{P_kU^{wa,\iota}_{\mathcal{L}}}(\xi,s)}\\
&=e^{-is\Lambda_{kg,\iota_1}(\xi-\eta)-is\Lambda_{kg,\iota_2}(\eta)+is\Lambda_{wa,\iota}(\xi)}\widehat{P_{k_1}V^{kg,\iota_1}_{\mathcal{L}_1}}(\xi-\eta,s)\widehat{P_{k_2}V^{kg,\iota_2}_{\mathcal{L}_2}}(\eta,s)\overline{\widehat{P_kV^{wa,\iota}_{\mathcal{L}}}(\xi,s)}.
\end{split}
\end{equation*}
Let $\sigma=(wa,\iota)$, $\mu=(kg,\iota_1)$, $\nu=(kg,\iota_2)$, and $\Phi(\xi,\eta)=\Phi_{\sigma\mu\nu}(\xi,\eta)=\Lambda_{\sigma}(\xi)-\Lambda_{\mu}(\xi-\eta)-\Lambda_{\nu}(\eta)$. We define the trilinear operators $\mathcal{Q}=\mathcal{Q}^{\sigma\mu\nu}_{s}$ by
\begin{equation}\label{OP1}
\begin{split}
\mathcal{Q}[f,g,h]:=\int_{\mathbb{R}^3\times\mathbb{R}^3}e^{is\Phi_{\sigma\mu\nu}(\xi,\eta)}&m_3(\xi)m_1(\xi-\eta)m_2(\eta)\cdot\widehat{g}(\xi-\eta)\widehat{h}(\eta)\overline{\widehat{f}(\xi)}\,d\xi d\eta.
\end{split}
\end{equation}
Clearly
\begin{equation}\label{OP2}
I_{m;k,k_1,k_2}=\int_{I_m}q_m(s)\mathcal{Q}[P_kV^{wa,\iota}_{\mathcal{L}}(s),P_{k_1}V^{kg,\iota_1}_{\mathcal{L}_1}(s),P_{k_2}V^{kg,\iota_2}_{\mathcal{L}_2}(s)]\,ds.
\end{equation} 
This formula and integration by parts in time show that if $m\in[1,L]$ then
\begin{equation}\label{mnb51.0}
I_{m;k,k_1,k_2}=i\int_{I_m}q'_m(s)II^{0}_{k,k_1,k_2}(s)+q_m(s)\big[II^{1}_{k,k_1,k_2}(s)+II^{2}_{k,k_1,k_2}(s)+II^{3}_{k,k_1,k_2}(s)\big]\,ds,
\end{equation}
where
\begin{equation}\label{mnb51.2}
\begin{split}
&II^{0}_{k,k_1,k_2}(s):=\mathcal{Q}^{\ast}[P_kV^{wa,\iota}_{\mathcal{L}}(s),P_{k_1}V^{kg,\iota_1}_{\mathcal{L}_1}(s),P_{k_2}V^{kg,\iota_2}_{\mathcal{L}_2}(s)],\\
&II^{1}_{k,k_1,k_2}(s):=\mathcal{Q}^{\ast}[P_k(\partial_sV^{wa,\iota}_{\mathcal{L}})(s),P_{k_1}V^{kg,\iota_1}_{\mathcal{L}_1}(s),P_{k_2}V^{kg,\iota_2}_{\mathcal{L}_2}(s)],\\
&II^{2}_{k,k_1,k_2}(s):=\mathcal{Q}^{\ast}[P_kV^{wa,\iota}_{\mathcal{L}}(s),P_{k_1}(\partial_sV^{kg,\iota_1}_{\mathcal{L}_1})(s),P_{k_2}V^{kg,\iota_2}_{\mathcal{L}_2}(s)],\\
&II^{3}_{k,k_1,k_2}(s):=\mathcal{Q}^{\ast}[P_kV^{wa,\iota}_{\mathcal{L}}(s),P_{k_1}V^{kg,\iota_1}_{\mathcal{L}_1}(s),P_{k_2}(\partial_sV^{kg,\iota_2}_{\mathcal{L}_2})(s)],
\end{split}
\end{equation} 
and
\begin{equation}\label{mnb50}
\mathcal{Q}^{\ast}[f,g,h]:=\int_{\mathbb{R}^3\times\mathbb{R}^3}\frac{e^{is\Phi_{\sigma\mu\nu}(\xi,\eta)}}{\Phi_{\sigma\mu\nu}(\xi,\eta)}m_3(\xi)m_1(\xi-\eta)m_2(\eta)\cdot\widehat{g}(\xi-\eta)\widehat{h}(\eta)\overline{\widehat{f}(\xi)}\,d\xi d\eta.
\end{equation}

Without loss of generality, in proving \eqref{abc12}, we may assume that $n_1\leq n_2$. We often use the basic bound
\begin{equation}\label{abc12.1}
|I_{m;k,k_1,k_2}|\lesssim\sup_{s\in I_m}|I_m|\,\|P_kU^{wa,\iota}_{\mathcal{L}}(s)\|_{L^p}\|P_{k_1}U^{kg,\iota_1}_{\mathcal{L}_1}(s)\|_{L^{p_1}}\|P_{k_2}U^{kg,\iota_2}_{\mathcal{L}_2}(s)\|_{L^{p_2}},
\end{equation}
for any choice of $(p,p_1,p_2)\in\{(2,2,\infty),(2,\infty,2),(\infty,2,2)\}$, which follows from Lemma \ref{L1easy} (i). We consider two cases.

{\bf{Case 1.}} We prove now the bounds \eqref{abc12} when
\begin{equation}\label{nbv3}
n_1\geq 1.
\end{equation}
In particular, $n_1,n_2\leq n-1\leq N_1-1$. We apply \eqref{abc12.1} to bound
\begin{equation}\label{wer10.1}
\begin{split}
2^{-k}2^{2N(n)k^+}|I_{m;k,k_1,k_2}|&\lesssim \varep_1^3|I_m|2^{(H(n)+H(n_1)+H(n_2))\delta m}2^{-k/2}2^{3\min(k,k_1,k_2)/2}\\
&\times 2^{N(n)k^+-N(n_1)k_1^+-N(n_2)k_2^+},
\end{split}
\end{equation}
using the $L^2$ bounds \eqref{abc21}. Since $H(n_1)+H(n_2)\leq H(n)-190$ and $N(n)\leq \min(N(n_1),N(n_2))-10$, this suffices to prove \eqref{abc12} if $|I_m|\lesssim 1$. It also suffices to control the contribution of triplets $(k,k_1,k_2)$ for which $\min(k,k_1,k_2)\leq -m+180\delta m$ when $|I_m|\approx 2^m$.  
It remains to prove that if $m\in[1/\kappa,L]$ then
\begin{equation}\label{mnb56.1}
\sum_{k,k_1,k_2\in\mathbb{Z},\,\min(k,k_1,k_2)\geq -m+180\delta m}2^{-k}2^{2N(n)k^+}|I_{m;k,k_1,k_2}|\lesssim \varep_1^32^{2H(n)\delta m}.
\end{equation}

For this we integrate by parts in time (the method of normal forms), using the identities \eqref{mnb51.0}--\eqref{mnb50}. Notice that
\begin{equation}\label{mnb60}
2^{-k}2^{2N(n)k^+}|\mathcal{Q}^{\ast}[P_kf,P_{k_1}g,P_{k_2}h]|\lesssim 2^{-k/2}2^{3\max(k_1^+,k_2^+)}2^{2N(n)k^+}\|P_kf\|_{L^2}\|P_{k_1}g\|_{L^2}\|P_{k_2}k\|_{L^2},
\end{equation}
using just the Cauchy-Schwarz in the Fourier space and the lower bound \eqref{pha3}. To apply this, we need the $L^2$ bounds
\begin{equation}\label{mnb60.1}
\begin{split}
\|P_kV^{wa,\iota}_{\mathcal{L}}(s)\|_{L^2}&\lesssim \varep_1 2^{k/2}2^{H(n)\delta m}2^{-N(n)k^+},\\
\|P_{k_1}V^{kg,\iota_1}_{\mathcal{L}_1}(s)\|_{L^2}&\lesssim \varep_12^{H(n_1)\delta m}2^{-N(n_1)k_1^+},\\
\|P_{k_2}V^{kg,\iota_2}_{\mathcal{L}_2}(s)\|_{L^2}&\lesssim \varep_12^{H(n_2)\delta m}2^{-N(n_2)k_2^+},
\end{split}
\end{equation}
and
\begin{equation}\label{mnb60.2}
\begin{split}
2^m\|P_k(\partial_sV^{wa,\iota}_{\mathcal{L}})(s)\|_{L^2}&\lesssim \varep_12^{k/2}2^{H''_{wa}(n)\delta m}2^{-N(n+1)k^+-5k^+},\\
2^m\|P_{k_1}(\partial_sV^{kg,\iota_1}_{\mathcal{L}_1})(s)\|_{L^2}&\lesssim \varep_12^{H''_{kg}(n_1)\delta m}2^{-N(n_1+1)k_1^+-5k_1^+},\\
2^m\|P_{k_2}(\partial_sV^{kg,\iota_2}_{\mathcal{L}_2})(s)\|_{L^2}&\lesssim \varep_12^{H''_{kg}(n_2)\delta m}2^{-N(n_2+1)k_2^+-5k_2^+},
\end{split}
\end{equation}
which follow from \eqref{abc21}, \eqref{abc30}, and \eqref{abc31}. Since $n_1,n_2,n=n_1+n_2\geq 1$, we also have
\begin{equation}\label{mnb60.3}
\begin{split}
&H''_{wa}(n)+H(n_1)+H(n_2)=H(n)+H(n_1)+H(n_2)+160\leq 2H(n)-30,\\
&H(n)+H''_{kg}(n_1)+H(n_2)=H(n)+H(n_1)+H''_{kg}(n_2)\leq 2H(n)-30.
\end{split}
\end{equation}
Using these estimates and the definitions \eqref{mnb51.2}, it follows that, for any $s\in I_m$,
\begin{equation*}
2^{-k}2^{2N(n)k^+}\Big\{|II^{0}_{k,k_1,k_2}(s)|+\sum_{l=1}^32^m|II^{l}_{k,k_1,k_2}(s)|\Big\}\lesssim\varep_1^32^{2H(n)\delta m-30\delta m}2^{-\max(k_1^+,k_2^+)}.
\end{equation*}
The desired bound \eqref{mnb56.1} follows, and this completes the proof in the case \eqref{nbv3}.

{\bf{Case 2.}} The bounds \eqref{abc12} in the case $n_1=0$ follow from Lemma \ref{Tat} below, see \eqref{tat1}.
\end{proof}

\subsection{The bound on $U^{kg}_{\mathcal{L}}$} We estimate now the Klein-Gordon components.

\begin{proposition}\label{BootstrapEE5}
With the notation and hypothesis in Proposition \ref{bootstrap}, for any $t\in[0,T]$, $n\in[0,N_1]$ and $\mathcal{L}\in\mathcal{V}_n$
\begin{equation}\label{rom1}
\|U^{kg}_{\mathcal{L}}(t)\|_{H^{N(n)}}\lesssim\varep_{0}\langle t\rangle^{H(n)\delta}.
\end{equation}
\end{proposition}

\begin{proof} With $P:=\langle\nabla\rangle^{N(n)}\mathcal{L}$ we define the energy functional
\begin{equation*}
\begin{split}
\mathcal{E}_{kg}^\mathcal{L}(t)=\int_{\mathbb{R}^3}\big[(\partial_0Pv(t))^2&+(Pv(t))^2+\sum_{j=1}^3(\partial_jPv(t))^2\\
&+u(t)\sum_{i,j=1}^3B^{ij}\partial_iPv(t)\partial_jPv(t)-Eu(t)(Pv(t))^2\big]\,dx.
\end{split}
\end{equation*}
Using the second equation in \eqref{abc2} we calculate
\begin{equation*}
\begin{split}
\frac{d}{dt}\mathcal{E}_{kg}^\mathcal{L}=\int_{\mathbb{R}^3}\big\{&2P(u B^{\al\be}\partial_\al\partial_\be v+Euv)\partial_0Pv+\partial_0u\sum_{i,j=1}^3B^{ij}\partial_iPv\cdot \partial_jPv-E\partial_0u(Pv)^2\\
&+2u\sum_{i,j=1}^3B^{ij}\partial_iPv\cdot \partial_0\partial_jPv-2EuPv\cdot\partial_0Pv\big\}\,dx.
\end{split}
\end{equation*}
Recall that $B^{00}=0$. Using integration by parts the energy identity can be rewritten as 
\begin{equation}\label{rom3}
\begin{split}
&\frac{d}{dt}\mathcal{E}_{kg}^\mathcal{L}(t)=\int_{\mathbb{R}^3}[I^\mathcal{L}(t)+II^\mathcal{L}(t)]\,dx\\
&I^\mathcal{L}:=2B^{\al\be}\partial_0Pv\{P(u \partial_\al\partial_\be v)-u\partial_\al\partial_\be Pv\}+2E\partial_0Pv\{P(uv)-uPv\},\\
&II^\mathcal{L}:=4uB^{j0}\partial_j\partial_0 Pv\partial_0Pv+\partial_0uB^{ij}\partial_iPv\partial_jPv-E\partial_0u(Pv)^2-2\partial_juB^{ij}\partial_iPv\partial_0Pv.
\end{split}
\end{equation}
Using \eqref{vcx3.1}, for any $s\in[0,T]$
\begin{equation}\label{mnb31}
\|\nabla u(s)\|_{L^\infty}+\|\partial_0u(s)\|_{L^\infty}\lesssim \sum_{k\in\mathbb{Z}}\|P_kU^{wa}(s)\|_{L^\infty}\lesssim \varep_1(1+s)^{-1}.
\end{equation}
Therefore, for any $s\in[0,T]$
\begin{equation*}
\Big|\int_{\mathbb{R}^3}II^\mathcal{L}(s)\,dx\Big|\lesssim \varep_1(1+s)^{-1}\|U_\mathcal{L}^{kg}(s)\|_{H^{N(n)}}^2\lesssim \varep_1^3(1+s)^{-1+2H(n)\delta}.
\end{equation*}
Since $\mathcal{E}_{kg}^\mathcal{L}(s)\approx \|U^{kg}_\mathcal{L}(s)\|_{H^{N(n)}}^2$ for any $s\in[0,T]$, it suffices to prove that, for any $t\in[0,T]$,
\begin{equation}\label{rom5}
\Big|\int_0^t\int_{\mathbb{R}^3}I^\mathcal{L}(x,s)\,dxds\Big|\lesssim \varep_1^3\langle t\rangle^{2H(n)\delta}.
\end{equation}

The commutation relations \eqref{abc4} show that
\begin{equation*}
P(u \partial_\al\partial_\be v)=\langle\nabla\rangle^{N(n)}\{\partial_\al\partial_\be \mathcal{L}v\cdot u\}+{\sum}_\ast c_{\mathcal{L}_1,\mathcal{L}_2,\rho,\sigma}\langle\nabla\rangle^{N(n)}\{\partial_\rho\partial_\sigma \mathcal{L}_1v\cdot\mathcal{L}_2u\},
\end{equation*}
where $c_{\mathcal{L}_1,\mathcal{L}_2,\rho,\sigma}$ are suitable coefficients, and the sum ${\sum}_\ast$ is taken over operators $\mathcal{L}_1,\mathcal{L}_2\in\mathcal{V}_n$ with $|\mathcal{L}_1|+|\mathcal{L}_2|\leq n$, $|\mathcal{L}_1|\leq n-1$, and indices $\rho,\sigma\in\{0,1,2,3\}$. Also,
\begin{equation*}
P(u v)=\langle\nabla\rangle^{N(n)}\{\mathcal{L}v\cdot u\}+{\sum}_{\ast\ast} c'_{\mathcal{L}_1,\mathcal{L}_2}\langle\nabla\rangle^{N(n)}\{\mathcal{L}_1v\cdot \mathcal{L}_2u\},
\end{equation*}
where $c'_{\mathcal{L}_1,\mathcal{L}_2}$ are suitable coefficients, and the sum ${\sum}_{\ast\ast}$ is taken over operators $\mathcal{L}_1,\mathcal{L}_2\in\mathcal{V}_n$ with $|\mathcal{L}_1|+|\mathcal{L}_2|\leq n$, $|\mathcal{L}_1|\leq n-1$.

We express the functions $\mathcal{L}_1v$, $\mathcal{L}v$, $\mathcal{L}_2u$ in terms of the variables $U^{kg,\pm}_{\mathcal{L}_1}$, $U_{\mathcal{L}}^{kg,\pm}$, $U_{\mathcal{L}_2}^{wa,\pm}$, as in \eqref{variables4}--\eqref{on5}. Let $m_1$ denote a multiplier as in \eqref{abc9} and define
\begin{equation}\label{rom7}
b_n(\xi,\eta):=|\eta|^{-1}\langle\xi-\eta\rangle m_1(\xi-\eta)\langle\xi\rangle^{N(n)}[\langle\xi\rangle^{N(n)}-\langle\xi-\eta\rangle^{N(n)}].
\end{equation} 
With $q_m$ defined as in \eqref{nh2}, for \eqref{rom5} it suffices to prove that 
\begin{equation}\label{rom8}
\Big|\int_{I_m}\int_{\mathbb{R}^3\times\mathbb{R}^3}q_m(s)b_n(\xi,\eta)\cdot \widehat{U^{kg,\iota_1}_{\mathcal{L}}}(\xi-\eta,s)\widehat{U^{wa,\iota_2}}(\eta,s)\overline{\widehat{U^{kg,\iota}_{\mathcal{L}}}(\xi,s)}\,d\xi d\eta ds\Big|\lesssim \varep_1^32^{2H(n)\delta m},
\end{equation}
and
\begin{equation}\label{rom9}
\begin{split}
\Big|\int_{I_m}\int_{\mathbb{R}^3\times\mathbb{R}^3}&q_m(s)|\eta|^{-1}(1+|\xi|^2)^{N(n)}\langle\xi-\eta\rangle m_1(\xi-\eta)\\
&\times\widehat{U^{kg,\iota_1}_{\mathcal{L}_1}}(\xi-\eta,s)\widehat{U^{wa,\iota_2}_{\mathcal{L}_2}}(\eta,s)\overline{\widehat{U^{kg,\iota}_{\mathcal{L}}}(\xi,s)}\,d\xi d\eta ds\Big|\lesssim \varep_1^32^{2H(n)\delta m},
\end{split}
\end{equation}
provided that $\iota,\iota_1,\iota_2\in\{+,-\}$, and $\mathcal{L}_1\in\mathcal{V}_{n_1},\mathcal{L}_2\in\mathcal{V}_{n_2}$, $n_1+n_2\leq n$, $n_1\leq n-1$.

{\bf{Step 1.}} We start by proving the bounds \eqref{rom8}. We decompose dyadically in frequency. For any $k,k_1,k_2\in\mathbb{Z}$ let
\begin{equation}\label{rom10}
J_{m;k,k_1,k_2}^{n,0}:=\int_{I_m}\int_{\mathbb{R}^3\times\mathbb{R}^3}q_m(s)b_n(\xi,\eta)\widehat{P_{k_1}U^{kg,\iota_1}_{\mathcal{L}}}(\xi-\eta,s)\widehat{P_{k_2}U^{wa,\iota_2}}(\eta,s)\overline{\widehat{P_kU^{kg,\iota}_{\mathcal{L}}}(\xi,s)}\,d\xi d\eta ds.
\end{equation}

We have the multiplier bounds
\begin{equation}\label{rom11}
\big\|\mathcal{F}^{-1}\{\varphi_k(\xi)\varphi_{k_1}(\xi-\eta)\varphi_{k_2}(\eta)b(\xi,\eta)\}\big\|_{L^1(\mathbb{R}^3\times\mathbb{R}^3)}\lesssim 2^{\min(k_1^+,k^+)}2^{(2N(n)-1)\overline{k}^+},
\end{equation}
where, as before, $\max(k,k_1,k_2)=\overline{k}$ and $\min(k,k_1,k_2)=\underline{k}$. This can be seen easily when $\overline{k}\leq 0$. On the other hand, if $\overline{k}\geq 0$ then \eqref{rom11} can be proved by analyzing the three cases $k=\underline{k}$, $k_1=\underline{k}$, and $k_2=\underline{k}$, and using the cancellation in the multiplier in the last case.

Using Lemma \ref{L1easy}, \eqref{vcx3.1}, and \eqref{abc23}, we have
\begin{equation}\label{mnb42}
|J^{n,0}_{m;k,k_1,k_2}|\lesssim \int_{I_m}2^{2N(n)\overline{k}^+}\|P_{k_1}U^{kg}_{\mathcal{L}}(s)\|_{L^2}\|P_kU^{kg}_{\mathcal{L}}(s)\|_{L^2}\cdot \varep_12^{-m}2^{-k_2^+}2^{k_2^-/2}\,ds
\end{equation}
if $k_2=\underline{k}$, and
\begin{equation}\label{mnb43}
|J^{n,0}_{m;k,k_1,k_2}|\lesssim |I_m|\varep_1^32^{-m+\kappa m/2}2^{\underline{k}^-/2}2^{-\overline{k}^+/2}
\end{equation}
if $k_2\geq \underline{k}+1$. Indeed, this follows by estimating the lowest frequency factor in $L^\infty$ and the other two factors in $L^2$, except for the case $n=N_1$, $k_2\geq \underline{k}+1$ when we estimate the high frequency wave component in $L^\infty$. The gain of $1/2$ high-order derivative in \eqref{mnb43} is due to the gain of derivative in \eqref{rom11}. It follows from \eqref{mnb42} that
\begin{equation*}
\sum_{k,k_1,k_2\in\mathbb{Z},\,k_2=\underline{k}}|J^{n,0}_{m;k,k_1,k_2}|\lesssim \varep_1^32^{2\delta m}.
\end{equation*} 
Moreover, the sum of $|J_{m;k,k_1,k_2}^{n,0}|$ over $k,k_1,k_2$ with $\underline{k}\leq -2\kappa m$ or $\overline{k}\geq 2\kappa m$ is also suitably bounded due to \eqref{mnb43}. For \eqref{rom8} it remains to prove that
\begin{equation}\label{rom12}
\sup_{k,k_1,k_2\in[-2\kappa m,2\kappa m]}\big|J^{n,0}_{m;k,k_1,k_2}\big|\lesssim \varep_{1}^3
\end{equation}
provided that $t\in[0,T]$, $\iota,\iota_1,\iota_2\in\{+,-\}$, and $m\in\{1/\kappa,\ldots,L\}$.

This follows easily by integration by parts in time, as in the proof of Proposition \ref{BootstrapEE4}. This procedure gains a factor of $2^{-m}$ and losses at most a factor of $2^{10\kappa m}$ when applying Lemma \ref{pha2} (ii), in the range of frequencies as in \eqref{rom12}.
\medskip

{\bf{Step 2.}} We prove now the bounds \eqref{rom9}. Notice that the case $n_1=0$ follows from Lemma \ref{Tat} below, after making changes of variables. Recall that $n_1\leq n-1$, so we may assume that
\begin{equation}\label{sta1}
n_1,n_2\in[1,N_1-1],\qquad n_1+n_2=n.
\end{equation}
We define 
\begin{equation*}
\begin{split}
J^{n,1}_{m;k,k_1,k_2}:=\int_{I_m}\int_{\mathbb{R}^3\times\mathbb{R}^3}&q_m(s)|\eta|^{-1}(1+|\xi|^2)^{N(n)}\langle\xi-\eta\rangle m_1(\xi-\eta)\\
&\times\widehat{P_{k_1}U^{kg,\iota_1}_{\mathcal{L}_1}}(\xi-\eta,s)\widehat{P_{k_2}U^{wa,\iota_2}_{\mathcal{L}_2}}(\eta,s)\overline{\widehat{P_kU^{kg,\iota}_{\mathcal{L}}}(\xi,s)}\,d\xi d\eta ds,
\end{split}
\end{equation*}
and we have to prove that
\begin{equation}\label{rom9.2}
\sum_{k,k_1,k_2\in\mathbb{Z}}|J_{m;k,k_1,k_2}^{n,1}|\lesssim \varep_1^32^{2H(n)\delta m}.
\end{equation}

Using just the $L^2$ bounds \eqref{abc21} and Sobolev embedding we see that
\begin{equation}\label{rom50}
|J_{m;k,k_1,k_2}^{n,1}|\lesssim \varep_1^3|I_m|2^{(2H(n)-190)\delta m}2^{N(n)k^+-N(n_1)k_1^+-N(n_2)k_2^+}2^{k_1^+}2^{-k_2/2}2^{3\min(k,k_1,k_2)/2},
\end{equation}
since $H(n_1)+H(n_2)\leq H(n)-190$. This suffices to prove the desired bound when $|I_m|\lesssim 1$. If $|I_m|\approx 2^m$ (so $m\in[1/\kappa,L]$), the bound \eqref{rom50} still suffices to control the contribution of the triplets $(k,k_1,k_2)$ for which $\min(k,k_1,k_2)\leq -m+180\delta m$. It remains to prove that
\begin{equation*}
\sum_{k,k_1,k_2\in\mathbb{Z},\,\min(k,k_1,k_2)\geq -m+180\delta m}|J^{n,1}_{m;k,k_1,k_2}|\lesssim \varep_1^32^{2H(n)\delta m}.
\end{equation*}
We notice that this is similar to the bound \eqref{mnb56.1} in Proposition \ref{BootstrapEE4}, essentially with the indices $k$ and $k_2$ reversed. The proof follows by the same integration by parts argument, using just the $L^2$ estimates \eqref{mnb60.1}--\eqref{mnb60.2}. This completes the proof of the proposition.
\end{proof}

\subsection{The main cubic bulk estimate} We prove now suitable bounds on the cubic bulk terms arising in the energy estimates in Propositions \ref{BootstrapEE4} and \ref{BootstrapEE5}, corresponding to the case when all the vector-fields hit one of the profiles.

\begin{lemma}\label{Tat}
Assume that $n\in[0,N_1]$ and $\mathcal{L},\mathcal{L}_2\in\mathcal{V}_n$, $t\in[0,T]$, and $m\in\{0,\ldots,L+1\}$. As in \eqref{abc11} (with $(n_1,n_2,n)=(0,n,n)$), for any $k,k_1,k_2\in\mathbb{Z}$ and $\iota,\iota_1,\iota_2\in\{+,-\}$, let
\begin{equation}\label{tat0}
\begin{split}
I_{m;k,k_1,k_2}:=\int_{I_m}\int_{\mathbb{R}^3\times\mathbb{R}^3}&q_m(s)m_3(\xi)m_1(\xi-\eta)m_2(\eta)\\
&\times\widehat{P_{k_1}U^{kg,\iota_1}}(\xi-\eta,s)\widehat{P_{k_2}U_{\mathcal{L}_2}^{kg,\iota_2}}(\eta,s)\overline{\widehat{P_kU^{wa,\iota}_{\mathcal{L}}}(\xi,s)}\,d\xi d\eta ds,
\end{split}
\end{equation}
where $m_1,m_2,m_3$ are multipliers as in \eqref{abc9}. Then 
\begin{equation}\label{tat1}
\sum_{k,k_1,k_2\in\mathbb{Z}}2^{-k}\big(2^{2N(n)k^+}+2^{e(n)2N(n)k_2^+}2^{e(n)k_1^+}\big)|I_{m;k,k_1,k_2}|\lesssim \varep_1^32^{2H(n)\delta m},
\end{equation}
where $e(0):=0$ and $e(n):=1$ for $n\in[1,N_1]$.
\end{lemma} 

\begin{proof} Let $\underline{k}:=\min(k,k_1,k_2)$ and $\overline{k}:=\max(k,k_1,k_2)$. Using just the $L^2$ bounds \eqref{abc21} and \eqref{vcx1.1}, we have
\begin{equation}\label{wer10.2}
\begin{split}
2^{-k}|I_{m;k,k_1,k_2}|\lesssim \varep_1^3|I_m|2^{2H(n)\delta m}&2^{-k/2}2^{3\min(k,k_1,k_2)/2}2^{k_1^-+\kappa k_1^-}\\
&\times 2^{-N(n)k^+-N(n)k_2^+}2^{-N(0)k_1^++2k_1^+}.
\end{split}
\end{equation}
This suffices bound the contribution of triplets $(k,k_1,k_2)$ for which $\underline{k}\leq -m$ (in the case $n=0$ we use also a similar bound with the roles of $k_1$ and $k_2$ reversed). It also suffices to prove the desired bound \eqref{tat1} when $|I_m|\lesssim 1$.

{\bf{Step 1.}} We show first that if $m\in[1/\delta,L]$ then
\begin{equation}\label{njm1}
\sum_{k,k_1,k_2\in\mathbb{Z},\,\underline{k}\geq -m,\,k\leq -0.6m}2^{-k}\big(2^{2N(n)k^+}+2^{e(n)2N(n)k_2^+}2^{e(n)k_1^+}\big)|I_{m;k,k_1,k_2}|\lesssim \varep_1^32^{2H(n)\delta m}.
\end{equation}
This is the case of small frequencies $2^k$. The estimates \eqref{wer10.2} cleary suffice to control the contribution of the triplets $(k,k_1,k_2)$ for which $k\leq -0.6m$ and $2^{k_1}\lesssim 2^{-0.4m}$. They also suffice to control the contribution of the triplets $(k,k_1,k_2)$ for which $k\leq-0.6$ and $k+k_1^-(1+\kappa)-35k_1^+\leq -m$.

It remains to bound the contribution of the triplets $(k,k_1,k_2)$ for which 
\begin{equation}\label{njm2}
k\in[-m,-0.6m]\qquad\text{ and }\qquad k+k_1^-(1+\kappa)-35k_1^+\geq -m.
\end{equation}
In particular, $k_1\geq -m/2+100$. Let $J:=k_1^-+m-40$ and decompose $P_{k_1}V^{kg,\iota_1}=V^{kg,\iota_1}_{\leq J,k_1}+V^{kg,\iota_1}_{>J,k_1}$ as in \eqref{ku10}. With $\mathcal{Q}$ as in \eqref{OP1}, let
\begin{equation*}
\begin{split}
&I^1_{m;k,k_1,k_2}=\int_{I_m}q_m(s)\mathcal{Q}[P_kV^{wa,\iota}_{\mathcal{L}}(s),V^{kg,\iota_1}_{\leq J,k_1}(s),P_{k_2}V^{kg,\iota_2}_{\mathcal{L}_2}(s)]\,ds,\\
&I^2_{m;k,k_1,k_2}=\int_{I_m}q_m(s)\mathcal{Q}[P_kV^{wa,\iota}_{\mathcal{L}}(s),V^{kg,\iota_1}_{>J,k_1}(s),P_{k_2}V^{kg,\iota_2}_{\mathcal{L}_2}(s)]\,ds.
\end{split}
\end{equation*}  

Using \eqref{abc21} and \eqref{ku10} we estimate
\begin{equation*}
\begin{split}
2^{-k}|I^1_{m;k,k_1,k_2}|&\lesssim 2^m2^{-k}\sup_{s\in I_m}\|P_kV^{wa,\iota}_{\mathcal{L}}(s)\|_{L^2}\|V^{kg,\iota_1}_{\leq J,k_1}(s)\|_{L^\infty}\|P_{k_2}V^{kg,\iota_2}_{\mathcal{L}_2}(s)\|_{L^2}\\
&\lesssim\varep_1^32^{2H(n)\delta m}2^{-m/2}2^{-k/2}2^{-k_1^-/2}2^{-N(n)k^+-N(n)k_2^+}2^{-N_0k_1^++(d'+6)k_1^+}.
\end{split}
\end{equation*}
Therefore, for $(k,k_1,k_2)$ as in \eqref{njm2},
\begin{equation}\label{njm4}
\begin{split}
2^{-k}\big(2^{2N(n)k^+}+2^{e(n)2N(n)k_2^+}&2^{e(n)k_1^+}\big)|I^1_{m;k,k_1,k_2}|\\
&\lesssim \varep_1^32^{2H(n)\delta m}2^{-m/2}2^{-k/2}2^{-k_1^-/2}2^{e(n)N(n)k_2^+}2^{-N(1)k_1^++12k_1^+}.
\end{split}
\end{equation} 

Similarly, using \eqref{abc21}, \eqref{ku10}, and $L^2$ bounds we estimate
\begin{equation*}
\begin{split}
2^{-k}|I^2_{m;k,k_1,k_2}|&\lesssim 2^m2^{-k}2^{3k/2}\sup_{s\in I_m}\|P_kV^{wa,\iota}_{\mathcal{L}}(s)\|_{L^2}\|V^{kg,\iota_1}_{>J,k_1}(s)\|_{L^2}\|P_{k_2}V^{kg,\iota_2}_{\mathcal{L}_2}(s)\|_{L^2}\\
&\lesssim\varep_1^32^{2H(n)\delta m}2^{10\delta m}2^{k-k_1^-}2^{-N(n)k^+-N(n)k_2^+}2^{-N(1)k_1^+}.
\end{split}
\end{equation*}
Therefore, for $(k,k_1,k_2)$ as in \eqref{njm2},
\begin{equation}\label{njm5}
\begin{split}
2^{-k}\big(2^{2N(n)k^+}+2^{e(n)2N(n)k_2^+}&2^{e(n)k_1^+}\big)|I^1_{m;k,k_1,k_2}|\\
&\lesssim \varep_1^32^{2H(n)\delta m}2^{10\delta m}2^{k-k_1^-}2^{e(n)N(n)k_2^+}2^{-N(1)k_1^++k_1^+}.
\end{split}
\end{equation} 

Notice that for $(k,k_1,k_2)$ as in \eqref{njm2} we have $2^{-m/2}2^{-k/2}2^{-k_1^-/2}2^{12k_1^+}\lesssim 2^{-\delta|m+k|+\delta k_1^--k_1^+}$ and $2^{10\delta m}2^{k-k_1^-}2^{k_1^+}\lesssim 2^{-\delta m-k_1^+}$. Therefore the bounds \eqref{njm4}-\eqref{njm5} suffice to bound the remaining contribution of the triplets $(k,k_1,k_2)$ as in \eqref{njm2}, as claimed in \eqref{njm1}.

{\bf{Step 2.}} We show now that if $m\in[1/\delta,L]$ then
\begin{equation}\label{njm6}
\sum_{k,k_1,k_2\in\mathbb{Z},\,\underline{k}\geq -m,\,k\geq -0.6m,\,\overline{k}\leq 4\kappa m}2^{-k}\big(2^{2N(n)k^+}+2^{e(n)2N(n)k_2^+}2^{e(n)k_1^+}\big)|I_{m;k,k_1,k_2}|\lesssim \varep_1^32^{2H(n)\delta m}.
\end{equation}
To prove this we would like to integrate by parts in time (the method of normal forms). We examine the identities \eqref{mnb51.0}--\eqref{mnb50} and estimate $|II^{l}_{k,k_1,k_2}(s)|$, $l\in\{0,1,2,3\}$, using Lemma \ref{pha2} (ii). The bounds we need are
\begin{equation}\label{mnb51}
\begin{split}
\|P_kV^{wa,\iota}_{\mathcal{L}}(s)\|_{L^2}+2^m\|P_k(\partial_sV^{wa,\iota}_{\mathcal{L}})(s)\|_{L^2}&\lesssim \varep_12^{k^-/2}2^{H''_{wa}(n)\delta m}2^{-N(n+1)k^+-5k^+},\\
\|P_{k_2}V^{kg,\iota_2}_{\mathcal{L}_2}(s)\|_{L^2}+2^m\|P_{k_2}(\partial_sV^{kg,\iota_2}_{\mathcal{L}_2})(s)\|_{L^2}&\lesssim \varep_12^{H''_{kg}(n)\delta m}2^{-N(n+1)k_2^+-5k_2^+},\\
\|e^{-is\Lambda_{kg,\iota_1}}P_{k_1}V^{kg,\iota_1}(s)\|_{L^\infty}&\lesssim \varep_12^{k_1^-/2}2^{-m+10\delta m}2^{-N_0k_1^++(d+2)k_1^+},
\end{split}
\end{equation}
which follow from \eqref{abc21}, \eqref{abc30}, and \eqref{abc31}. Using \eqref{pha4}, and recalling the assumptions on the triplets $(k,k_1,k_2)$ in \eqref{njm6}, it follows that, for $s\in I_m$,
\begin{equation}\label{mnb51.1}
\begin{split}
|II^{0}_{k,k_1,k_2}(s)|&+2^m|II^{1}_{k,k_1,k_2}(s)|+2^m|II^{3}_{k,k_1,k_2}(s)|\lesssim\varep_1^32^{-k^-/2}2^{k_1^-/2}2^{-m+\kappa m}.
\end{split}
\end{equation}

To estimate $|II^{n,2}_{k,k_1,k_2}(s)|$ we need an additional $L^\infty$ bound, namely
\begin{equation}\label{mnb51.4}
\|e^{-is\Lambda_{kg,\iota_1}}P_{k_1}(\partial_sV^{kg,\iota_1})(s)\|_{L^\infty}\lesssim \varep_12^{-2m+\kappa m}2^{-N(1)k_1^++5k_1^+},
\end{equation}
which follows from \eqref{abc23.2}--\eqref{abc23} and the identity $e^{-is\Lambda_{kg}}(\partial_sV^{kg})(s)=\mathcal{N}^{kg}(s)$. Using also the $L^2$ bounds in the first two lines of \eqref{mnb51}, together with Lemma \ref{pha2} (ii), we estimate
\begin{equation*}
2^m|II^{2}_{k,k_1,k_2}(s)|\lesssim\varep_1^32^{-k^-/2}2^{-m+2\kappa m}.
\end{equation*}
Using also the bounds \eqref{mnb51.1} and the formula \eqref{mnb51.0}, it follows that
\begin{equation*}
|I_{m;k,k_1,k_2}|\lesssim\varep_1^32^{-k^-/2}2^{-m+2\kappa m},
\end{equation*}
for triplets $(k,k_1,k_2)$ as in \eqref{njm6}. The desired bound \eqref{njm6} follows. 
 
{\bf{Step 3.}} Finally we show that if $m\in[1/\delta,L]$ then
\begin{equation}\label{njm12}
\sum_{k,k_1,k_2\in\mathbb{Z},\,\underline{k}\geq -m,\,k\geq -0.6m,\,\overline{k}\geq 4\kappa m}2^{-k}\big(2^{2N(n)k^+}+2^{e(n)2N(n)k_2^+}2^{e(n)k_1^+}\big)|I_{m;k,k_1,k_2}|\lesssim \varep_1^32^{2H(n)\delta m}.
\end{equation}
Using \eqref{abc12.1} with $(p,p_1,p_2)=(2,\infty,2)$ and the bounds \eqref{abc21} and \eqref{abc23}, we have
\begin{equation}\label{wer10.3}
2^{-k}|I_{m;k,k_1,k_2}|\lesssim \varep_1^32^{10\delta m}2^{2H(n)\delta m}2^{-k/2}2^{k_1^-/2}2^{-N(n)k^+-N(n)k_2^+}2^{-N_0k_1^++(d+2)k_1^+}.
\end{equation}
Notice the factor $2^{-k/2}$ in \eqref{wer10.3}, which is favorable when $k$ is large. These bounds clearly suffice to control the contribution of the triplets $(k,k_1,k_2)$ in \eqref{njm12} for which $k_1=\min(k,k_1,k_2)$. 

We consider now the sum over triplets $(k,k_1,k_2)$ as in \eqref{njm12} for which $k_2=\min(k,k_1,k_2)$. We use \eqref{abc12.1} with $(p,p_1,p_2)=(2,2,\infty)$ and \eqref{abc23}, so
\begin{equation}\label{njm14}
2^{-k}|I_{m;k,k_1,k_2}|\lesssim \varep_1^32^{\kappa m}2^{2H(n)\delta m}2^{-k/2}2^{-N(n)k^+-N(0)k_1^+}2^{-|k_2|/2},
\end{equation} 
provided that $n\leq N_1-1$. The estimates \eqref{wer10.3} suffice to bound the contribution of the  triplets $(k,k_1,k_2)$ for which $k_2=\min(k,k_1,k_2)$ if $n\geq 2$, while the estimates \eqref{njm14} suffice in the remaining cases $n\in\{0,1\}$.

Finally, we consider the sum over triplets $(k,k_1,k_2)$ as in \eqref{njm12} for which $k=\min(k,k_1,k_2)$. We use \eqref{abc12.1} with $(p,p_1,p_2)=(\infty,2,2)$ and \eqref{abc23.2}, so
\begin{equation}\label{njm15}
2^{-k}|I_{m;k,k_1,k_2}|\lesssim \varep_1^32^{\kappa m}2^{2H(n)\delta m}2^{-N(n)k_2^+-N(0)k_1^+}2^{-4k^+},
\end{equation} 
provided that $n\leq N_1-1$. This suffices to complete the proof of \eqref{njm12} if $n\leq N_1-1$. After these reductions, for \eqref{njm12} it remains to show that if $n=N_1$ then 
\begin{equation}\label{njm12.2}
\sum_{k,k_1,k_2\in\mathbb{Z},\,k=\underline{k}\geq -0.6m,\,\overline{k}\geq 4\kappa m}2^{-k}2^{2N(n)k_2^+}2^{k_1^+}|I_{m;k,k_1,k_2}|\lesssim \varep_1^32^{2H(n)\delta m}.
\end{equation}
This follows by the same integration by parts argument as in Step 2 above, using Lemma \ref{pha2} (ii) and the bounds \eqref{mnb51} and \eqref{mnb51.4}. This completesthe proof of the lemma.
\end{proof}

\section{Bounds on the profiles, I: weighted $L^2$ norms}\label{DiEs}

In this section we prove the bounds in \eqref{bootstrap3.2}. These bounds will be derived by elliptic estimates from the bounds \eqref{bootstrap3.1} proved in the previous two sections. We also need two identities that connect the vector-fields $\Gamma_l$ with weighted norms on the profiles.

\begin{lemma}\label{ident}
Assume $\mu\in\{wa,kg\}$ and
\begin{equation}\label{wer0}
(\partial_t+i\Lambda_\mu)U=\mathcal{N},
\end{equation}
on $\mathbb{R}^3\times[0,T]$. If $V(t)=e^{it\Lambda_\mu}U(t)$ and $l\in\{1,2,3\}$ then, for any $t\in[0,T]$,
\begin{equation}\label{wer1}
\widehat{\Gamma_lU}(\xi,t)=i(\partial_{\xi_l}\widehat{\mathcal{N}})(\xi,t)+e^{-it\Lambda_{\mu}(\xi)}\partial_{\xi_l}[\Lambda_{\mu}(\xi)\widehat{V}(\xi,t)].
\end{equation}
\end{lemma}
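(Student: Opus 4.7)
The plan is to compute $\widehat{\Gamma_l U}(\xi,t)$ directly from the definition $\Gamma_l = x_l \partial_t + t\partial_l$, then rewrite the right-hand side of \eqref{wer1} via $\widehat{V}(\xi,t) = e^{it\Lambda_\mu(\xi)}\widehat{U}(\xi,t)$ and match terms. The only ``structural'' input will be the elementary identity
\begin{equation*}
\Lambda_\mu(\xi)(\partial_{\xi_l}\Lambda_\mu)(\xi)=\xi_l, \qquad \mu\in\{wa,kg\},\ l\in\{1,2,3\},
\end{equation*}
which holds because $\Lambda_{wa}(\xi)^2=|\xi|^2$ and $\Lambda_{kg}(\xi)^2=|\xi|^2+1$, so differentiating each of these gives $2\Lambda_\mu \partial_{\xi_l}\Lambda_\mu=2\xi_l$.

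First I would use the Fourier conventions $\widehat{x_l f}=i\partial_{\xi_l}\widehat{f}$ and $\widehat{\partial_l f}=i\xi_l\widehat{f}$ to write
\begin{equation*}
\widehat{\Gamma_l U}(\xi,t)=i\partial_{\xi_l}\big(\partial_t\widehat{U}\big)(\xi,t)+it\xi_l\widehat{U}(\xi,t).
\end{equation*}
Then I substitute $\partial_t\widehat{U}=\widehat{\mathcal{N}}-i\Lambda_\mu(\xi)\widehat{U}$, which comes from Fourier-transforming \eqref{wer0}, to get
\begin{equation*}
\widehat{\Gamma_l U}(\xi,t)=i\partial_{\xi_l}\widehat{\mathcal{N}}(\xi,t)+\partial_{\xi_l}\!\big[\Lambda_\mu(\xi)\widehat{U}(\xi,t)\big]+it\xi_l\widehat{U}(\xi,t).
\end{equation*}

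Next, I expand the claimed right-hand side. Writing $\widehat{V}=e^{it\Lambda_\mu}\widehat{U}$ and applying the product rule,
\begin{equation*}
e^{-it\Lambda_\mu(\xi)}\partial_{\xi_l}\!\big[\Lambda_\mu(\xi)\widehat{V}(\xi,t)\big]
=\partial_{\xi_l}\!\big[\Lambda_\mu(\xi)\widehat{U}(\xi,t)\big]+it\,\Lambda_\mu(\xi)(\partial_{\xi_l}\Lambda_\mu)(\xi)\,\widehat{U}(\xi,t),
\end{equation*}
since the $e^{it\Lambda_\mu}$ factor in $\widehat V$ cancels with the prefactor $e^{-it\Lambda_\mu}$ except for the term where $\partial_{\xi_l}$ falls on $e^{it\Lambda_\mu}$, producing the factor $it\partial_{\xi_l}\Lambda_\mu$. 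Applying the identity $\Lambda_\mu\partial_{\xi_l}\Lambda_\mu=\xi_l$ turns this extra term into $it\xi_l\widehat{U}$, which matches the third term in the previous display, completing the proof.

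There is no real obstacle: the lemma is essentially an algebraic identity, and the only place where the specific form of the dispersion relation enters is the cancellation $\Lambda_\mu\partial_{\xi_l}\Lambda_\mu=\xi_l$, which is what makes the identity uniform in $\mu\in\{wa,kg\}$.
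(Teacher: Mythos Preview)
Your proof is correct and follows essentially the same approach as the paper: compute $\widehat{\Gamma_l U}$ using the equation $\partial_t\widehat U=\widehat{\mathcal N}-i\Lambda_\mu\widehat U$, expand $e^{-it\Lambda_\mu}\partial_{\xi_l}[\Lambda_\mu\widehat V]$ via the product rule, and cancel using $\Lambda_\mu\partial_{\xi_l}\Lambda_\mu=\xi_l$. The only cosmetic difference is that the paper works left-to-right while you expand both sides and match.
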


\begin{proof} We calculate
\begin{equation*}
\begin{split}
&\widehat{\Gamma_lU}(\xi,t)=\mathcal{F}\{x_l\partial_tU+t\partial_lU\}(\xi,t)\\
&=i(\partial_{\xi_l}\widehat{\mathcal{N}})(\xi,t)+\partial_{\xi_l}[\Lambda_{\mu}(\xi)\widehat{U}(\xi,t)]+it\xi_l\widehat{U}(\xi,t)\\
&=i(\partial_{\xi_l}\widehat{\mathcal{N}})(\xi,t)+e^{-it\Lambda_{\mu}(\xi)}\partial_{\xi_l}[\Lambda_{\mu}(\xi)\widehat{V}(\xi,t)]-it(\partial_{\xi_l}\Lambda_{\mu})(\xi)e^{-it\Lambda_{\mu}(\xi)}\Lambda_{\mu}(\xi)\widehat{V}(\xi,t)+it\xi_l\widehat{U}(\xi,t).
\end{split}
\end{equation*}
This gives \eqref{wer1} since $(\partial_{\xi_l}\Lambda_{\mu})(\xi)\Lambda_\mu(\xi)=\xi_l$.
\end{proof}

We prove now the bounds \eqref{bootstrap3.2}.

\begin{proposition}\label{DiEs1}
With the hypothesis in Proposition \ref{bootstrap}, for any $t\in[0,T]$, $n\in[0,N_1-1]$, $k\in\mathbb{Z}$, $\mathcal{L}\in\mathcal{V}_n$, and $l\in\{1,2,3\}$ we have
\begin{equation}\label{wer2}
2^{N(n+1)k^+}\big\{2^{k/2}\|\varphi_k(\xi)(\partial_{\xi_l}\widehat{V^{wa}_{\mathcal{L}}})(\xi,t)\|_{L^2_\xi}+2^{k^+}\|\varphi_k(\xi)(\partial_{\xi_l}\widehat{V^{kg}_{\mathcal{L}}})(\xi,t)\|_{L^2_\xi}\big\}\lesssim\varep_0\langle t\rangle^{H(n+1)\delta}.
\end{equation}
\end{proposition}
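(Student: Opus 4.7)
The plan is to reduce the weighted bound on $\partial_{\xi_l}\widehat{V^{\ast}_{\mathcal{L}}}$ to three already-available quantities by applying Lemma \ref{ident} to each of $U^{wa}_{\mathcal{L}}$ and $U^{kg}_{\mathcal{L}}$, which satisfy equations of the form $(\partial_t+i\Lambda_\ast)U^{\ast}_{\mathcal{L}}=\mathcal{N}^{\ast}_{\mathcal{L}}$ by \eqref{on6.1L}. Rewriting the conclusion of the lemma and using $\partial_{\xi_l}[\Lambda_\ast\widehat{V^{\ast}_{\mathcal{L}}}]=(\partial_{\xi_l}\Lambda_\ast)\widehat{V^{\ast}_{\mathcal{L}}}+\Lambda_\ast\partial_{\xi_l}\widehat{V^{\ast}_{\mathcal{L}}}$ gives
\begin{equation*}
\Lambda_\ast(\xi)\,\partial_{\xi_l}\widehat{V^{\ast}_{\mathcal{L}}}(\xi,t)=e^{it\Lambda_\ast(\xi)}\,\widehat{\Gamma_lU^{\ast}_{\mathcal{L}}}(\xi,t)-ie^{it\Lambda_\ast(\xi)}\partial_{\xi_l}\widehat{\mathcal{N}^{\ast}_{\mathcal{L}}}(\xi,t)-(\partial_{\xi_l}\Lambda_\ast)(\xi)\,\widehat{V^{\ast}_{\mathcal{L}}}(\xi,t),
\end{equation*}
for $\ast\in\{wa,kg\}$. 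On the support of $\varphi_k$ we have $\Lambda_{wa}\approx 2^k$ and $\Lambda_{kg}\approx 2^{k^+}$, and both multipliers $\partial_{\xi_l}\Lambda_\ast$ are uniformly bounded by $1$. Thus the desired bound on $2^{k/2}\|\varphi_k\partial_{\xi_l}\widehat{V^{wa}_{\mathcal{L}}}\|_{L^2}$ (resp.\ $2^{k^+}\|\varphi_k\partial_{\xi_l}\widehat{V^{kg}_{\mathcal{L}}}\|_{L^2}$) follows once I prove, for $\ast\in\{wa,kg\}$ with $\alpha_\ast=1/2$ if $\ast=wa$ and $\alpha_\ast=0$ if $\ast=kg$,
\begin{equation*}
2^{-k/2-\alpha_\ast k}\Big\{\|\varphi_k\widehat{V^{\ast}_{\mathcal{L}}}\|_{L^2}+\|\varphi_k\widehat{\Gamma_lU^{\ast}_{\mathcal{L}}}\|_{L^2}+\|\varphi_k\partial_{\xi_l}\widehat{\mathcal{N}^{\ast}_{\mathcal{L}}}\|_{L^2}\Big\}\lesssim \varep_0\langle t\rangle^{H'(n)\delta}2^{-N(n+1)k^+}.
\end{equation*}

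Each of the three terms is controlled by a direct appeal to earlier results. First, the profile term is bounded by passing to the $L^2$ norm and invoking the energy bound \eqref{bootstrap3.1} of the previous sections, which gives $\|P_kV^{wa}_{\mathcal{L}}\|_{L^2}\lesssim\varep_0\langle t\rangle^{H(n)\delta}2^{k/2}2^{-N(n)k^+}$ and $\|P_kV^{kg}_{\mathcal{L}}\|_{L^2}\lesssim\varep_0\langle t\rangle^{H(n)\delta}2^{-N(n)k^+}$. Second, the nonlinearity term equals $\|P_k(x_l\mathcal{N}^{\ast}_{\mathcal{L}})\|_{L^2}$ after a Fourier inversion, and is handled by the weighted elliptic estimates \eqref{abc30.3} (wave) and \eqref{abc31.3} (Klein-Gordon). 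Since $\varep_1^2=\varep_0^{4/3}\leq\varep_0$ for $\varep_0$ small, these give the desired $\varep_0$ factor. Third, the vector-field term is treated by the commutator identity
\begin{equation*}
\Gamma_lU^{\ast}_{\mathcal{L}}=U^{\ast}_{\Gamma_l\mathcal{L}}-[\partial_t-i\Lambda_\ast,\Gamma_l]\mathcal{L}(u\text{ or }v),\qquad[\partial_t-i\Lambda_\ast,\Gamma_l]=\partial_l-i[\Lambda_\ast,x_l]\,\partial_t,
\end{equation*}
where $[\Lambda_\ast,x_l]$ has $L^2$-bounded symbol $-i(\partial_{\xi_l}\Lambda_\ast)(\xi)$. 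The main piece is bounded by \eqref{bootstrap3.1} applied to $\Gamma_l\mathcal{L}\in\mathcal{V}_{n+1}$, which gives growth $\langle t\rangle^{H(n+1)\delta}$, and the commutator remainder is controlled by $\|P_kU^{\ast}_{\mathcal{L}}\|_{L^2}$ (using \eqref{on5} to pass between $\partial_t\mathcal{L}(u,v)$, $\partial_l\mathcal{L}(u,v)$ and $U^{\ast}_{\mathcal{L}}$), which gives the weaker growth $\langle t\rangle^{H(n)\delta}$.

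The remaining and main bookkeeping task is to check that the growth rates and derivative losses all line up with $H'(n)$ and $N(n+1)$. Since $H(n)\leq H(n+1)=H'(n)$ for $n\geq 1$ and $H(1)=1\leq 6=H'(0)$, the energy contributions fit; since $H''_{wa}(n)\leq H'(n)$ (trivially for $n\geq 1$, and $H''_{wa}(0)=3\leq 6$) and $H''_{kg}(n)\leq H'(n)$ (for $n\geq 1$, $H''_{kg}(n)=81n-10<81n+1=H'(n)$, with equality $H''_{kg}(0)=H'(0)=6$), the nonlinearity contributions also fit. The derivative losses $2^{4k^+}$ in \eqref{abc30.3}--\eqref{abc31.3} are absorbed since $N(n)-N(n+1)\geq d=10$ for all $n\in[0,N_1-1]$. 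The one place with no slack is the Klein-Gordon case $n=0$, where $H''_{kg}(0)=H'(0)=6$ exactly --- this is precisely why the paper sets $H'(0)=6$ rather than $H'(0)=H(1)=1$. No integration in time or normal-form arguments are needed here, so I anticipate no substantive analytical obstacle beyond careful verification of these inequalities.
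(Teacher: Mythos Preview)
Your approach is essentially identical to the paper's: apply Lemma \ref{ident}, isolate $\Lambda_\ast\partial_{\xi_l}\widehat{V^\ast_{\mathcal L}}$, and bound the three resulting pieces using the improved energy estimates \eqref{bootstrap3.1} and the weighted nonlinearity bounds \eqref{abc30.3}, \eqref{abc31.3}. Your bookkeeping of $H(n),H'(n),H''_{wa}(n),H''_{kg}(n),N(n)$ is correct. Two small remarks: (i) the displayed weight $2^{-k/2-\alpha_\ast k}$ is miswritten --- after dividing the identity by $\Lambda_\ast$ the correct prefactor is $2^{-k/2}$ in the wave case and $1$ in the Klein--Gordon case (this is just a typo and does not affect your argument); (ii) your explicit commutator computation $\Gamma_l U^\ast_{\mathcal L}=U^\ast_{\Gamma_l\mathcal L}+[\Gamma_l,\partial_t-i\Lambda_\ast]\mathcal L(\cdot)$ is a step the paper leaves implicit when it invokes Proposition \ref{BootstrapEE4} directly on $\Gamma_l U^{wa}_{\mathcal L}$, so you are actually being more careful here.
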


\begin{proof} The identity \eqref{wer1} for $U^{wa}_{\mathcal{L}}$ gives
\begin{equation*}
\widehat{\Gamma_lU^{wa}_{\mathcal{L}}}(\xi,t)=i(\partial_{\xi_l}\widehat{\mathcal{N}^{wa}_{\mathcal{L}}})(\xi,t)+e^{-it\Lambda_{wa}(\xi)}\partial_{\xi_l}[\Lambda_{wa}(\xi)\widehat{V^{wa}_{\mathcal{L}}}(\xi,t)].
\end{equation*}
Therefore
\begin{equation*}
e^{-it\Lambda_{wa}(\xi)}\Lambda_{wa}(\xi)(\partial_{\xi_l}\widehat{V^{wa}_{\mathcal{L}}})(\xi)=\widehat{\Gamma_lU^{wa}_{\mathcal{L}}}(\xi)-i(\partial_{\xi_l}\widehat{\mathcal{N}^{wa}_{\mathcal{L}}})(\xi)+e^{-it\Lambda_{wa}(\xi)}(\xi_l/|\xi|)\widehat{V^{wa}_{\mathcal{L}}}(\xi).
\end{equation*}
We multiply all the terms by $2^{-k/2}\varphi_k(\xi)$ and take $L^2$ norms to show that
\begin{equation}\label{wer3}
\begin{split}
2^{k/2}\|\varphi_k(\xi)(\partial_{\xi_l}\widehat{V^{wa}_{\mathcal{L}}})(\xi)\|_{L^2_\xi}&\lesssim 2^{-k/2}\|\varphi_k(\xi)\widehat{\Gamma_lU^{wa}_{\mathcal{L}}}(\xi)\|_{L^2}\\
&+2^{-k/2}\|\varphi_k(\xi)(\partial_{\xi_l}\widehat{\mathcal{N}^{wa}_{\mathcal{L}}})(\xi)\|_{L^2}+2^{-k/2}\|\varphi_k(\xi)\widehat{V^{wa}_{\mathcal{L}}}(\xi)\|_{L^2}.
\end{split}
\end{equation}
It follows from \eqref{abc30.3} and Proposition \ref{BootstrapEE4} that
\begin{equation*}
\begin{split}
2^{-k/2}\|\varphi_k(\xi)(\partial_{\xi_l}\widehat{\mathcal{N}^{wa}_{\mathcal{L}}})(\xi)\|_{L^2}&\lesssim \varep_1^2\langle t\rangle^{H''_{wa}(n)\delta}2^{-N(n)k^++5k^+},\\
2^{-k/2}\|\varphi_k(\xi)\widehat{\Gamma_lU^{wa}_{\mathcal{L}}}(\xi)\|_{L^2}&\lesssim \varep_0\langle t\rangle^{H(n+1)\delta}2^{-N(n+1)k^+},\\
2^{-k/2}\|\varphi_k(\xi)\widehat{V^{wa}_{\mathcal{L}}}(\xi)\|_{L^2}&\lesssim \varep_0\langle t\rangle^{H(n)\delta}2^{-N(n)k^+}.
\end{split}
\end{equation*}
The desired inequality for the wave component in \eqref{wer2} follows using \eqref{wer3}, since $N(n+1)\leq N(n)-d$ and $H(n+1)\geq \max(H''_{wa}(n),H(n))$.

The inequality for the Klein--Gordon component in \eqref{wer2} follows similarly, using the identity \eqref{wer1} for $\mu=kg$, the energy estimates in Proposition \ref{BootstrapEE5}, and the bounds \eqref{abc31.3}.
\end{proof}

\section{Bounds on the profiles, II: the Klein--Gordon $Z$ norm}\label{DiEs2}

In this section we prove the bounds in \eqref{bootstrap3.4} for the Klein--Gordon component. We notice that, unlike the energy norms, the $Z$ norms of the two profiles are not allowed to grow slowly in time. Because of this we need to renormalize the Klein--Gordon profile.

\subsection{Renormalization}\label{Nor1}

We start from the equation $\partial_tV^{kg}=e^{it\Lambda_{kg}}\mathcal{N}^{kg}$ for the profile $V^{kg}=V^{kg,+}$, where $\mathcal{N}^{kg}=uB^{\al\be}\partial_\al\partial_\be v+Euv$. In the Fourier space this becomes
\begin{equation*}
\partial_t\widehat{V^{kg}}(\xi,t)=\frac{1}{(2\pi)^3}\int_{\mathbb{R}^3}e^{it\Lambda_{kg}(\xi)}\widehat{u}(\eta,t)\big[B^{\al\be}\widehat{\partial_\al\partial_\be v}(\xi-\eta,t)+E\widehat{v}(\xi-\eta)\big]\,d\eta.
\end{equation*} 
Recall that $B^{00}=0$. The formulas in the second line of \eqref{on5} show that
\begin{equation*}
\begin{split}
B^{\al\be}\widehat{\partial_\al\partial_\be v}(\rho,t)&=(-B^{jk}\rho_j\rho_k)\frac{i[e^{-it\Lambda_{kg}(\rho)}\widehat{V^{kg,+}}(\rho,t)-e^{it\Lambda_{kg}(\rho)}\widehat{V^{kg,-}}(\rho,t)]}{2\Lambda_{kg}(\rho)}\\
&+(2B^{0k}\rho_k)\frac{i[e^{-it\Lambda_{kg}(\rho)}\widehat{V^{kg,+}}(\rho,t)+e^{it\Lambda_{kg}(\rho)}\widehat{V^{kg,-}}(\rho,t)]}{2}.
\end{split}
\end{equation*}
Therefore
\begin{equation}\label{Nor2}
\begin{split}
\partial_t\widehat{V^{kg}}(\xi,t)=\frac{1}{(2\pi)^3}\int_{\mathbb{R}^3}ie^{it\Lambda_{kg}(\xi)}\widehat{u}(\eta,t)\{&e^{-it\Lambda_{kg}(\xi-\eta)}\widehat{V^{kg,+}}(\xi-\eta,t)\mathfrak{q}_+(\xi-\eta)\\
&+e^{it\Lambda_{kg}(\xi-\eta)}\widehat{V^{kg,-}}(\xi-\eta,t)\mathfrak{q}_-(\xi-\eta)\}\,d\eta,
\end{split}
\end{equation} 
where
\begin{equation}\label{Nor3}
\mathfrak{q}_\pm(\rho):=\mp\frac{B^{jk}\rho_j\rho_k}{2\Lambda_{kg}(\rho)}+B^{0k}\rho_k\pm\frac{E}{2\Lambda_{kg}(\rho)}.
\end{equation}

We would like to eliminate the bilinear interaction between $u$ and $V^{kg,+}$ in the first line of \eqref{Nor2} corresponding to $|\eta|\ll 1$. To extract the main term we approximate, heuristically, 
\begin{equation*}
\begin{split}
\frac{1}{(2\pi)^3}\int_{|\eta|\ll \langle t\rangle^{-1/2}}&ie^{it\Lambda_{kg}(\xi)}\widehat{u}(\eta,t)e^{-it\Lambda_{kg}(\xi-\eta)}\widehat{V^{kg,+}}(\xi-\eta,t)\mathfrak{q}_+(\xi-\eta)\,d\eta\\
&\approx i\widehat{V^{kg,+}}(\xi,t)\mathfrak{q}_+(\xi)\frac{1}{(2\pi)^3}\int_{|\eta|\ll \langle t\rangle^{-1/2}}e^{it\eta\cdot\nabla\Lambda_{kg}(\xi)}\widehat{u}(\eta,t)\,d\eta\\
&\approx i\widehat{V^{kg}}(\xi,t)\mathfrak{q}_+(\xi)u_{low}(t\xi/\Lambda_{kg}(\xi),t),
\end{split}
\end{equation*}
where $u_{low}$ is a suitable low-frequency component of $u$.

In view of this calculation we set $p_0:=0.68$ and define the phase correction
\begin{equation}\label{Nor4}
\Theta(\xi,t):=\mathfrak{q}_+(\xi)\int_0^tu_{low}(s\xi/\Lambda_{kg}(\xi),s)\,ds,\qquad \widehat{u_{low}}(\rho,s):=\varphi_{\leq 0}(\langle s\rangle^{p_0}\rho)\widehat{u}(\rho,s).
\end{equation}  
Then we define the modified Klein--Gordon profile $V_\ast^{kg}$ by
\begin{equation}\label{Nor5}
\widehat{V^{kg}_\ast}(\xi,t):=e^{-i\Theta(\xi,t)}\widehat{V^{kg}}(\xi,t).
\end{equation}
We notice that both the function $u_{low}$ and the multiplier $\mathfrak{q}_+$ are real-valued, thus $\Theta$ is real-valued. With $u_{high}=u-u_{low}$, the formula \eqref{Nor2} shows that
\begin{equation}\label{Nor7}
\begin{split}
\partial_t\widehat{V^{kg}_\ast}(\xi,t)&=e^{-i\Theta(\xi,t)}\{\partial_t\widehat{V^{kg}}(\xi,t)-i\widehat{V^{kg}}(\xi,t)\mathfrak{q}_+(\xi)u_{low}(t\xi/\Lambda_{kg}(\xi),t)\}\\
&=\mathcal{R}_1(\xi,t)+\mathcal{R}_2(\xi,t)+\mathcal{R}_3(\xi,t),
\end{split}
\end{equation}
where
\begin{equation}\label{Nor9}
\mathcal{R}_1(\xi,t):=\frac{e^{-i\Theta(\xi,t)}}{(2\pi)^3}\int_{\mathbb{R}^3}ie^{it\Lambda_{kg}(\xi)}\widehat{u_{low}}(\eta,t)e^{it\Lambda_{kg}(\xi-\eta)}\widehat{V^{kg,-}}(\xi-\eta,t)\mathfrak{q}_{-}(\xi-\eta)\,d\eta,
\end{equation}
\begin{equation}\label{Nor8}
\begin{split}
&\mathcal{R}_2(\xi,t):=\frac{e^{-i\Theta(\xi,t)}}{(2\pi)^3}\int_{\mathbb{R}^3}i\widehat{u_{low}}(\eta,t)\\
&\times\{e^{it(\Lambda_{kg}(\xi)-\Lambda_{kg}(\xi-\eta))}\widehat{V^{kg}}(\xi-\eta,t)\mathfrak{q}_+(\xi-\eta)-e^{it(\xi\cdot\eta)/\Lambda_{kg}(\xi)}\widehat{V^{kg}}(\xi,t)\mathfrak{q}_+(\xi)\}\,d\eta,
\end{split}
\end{equation}
and
\begin{equation}\label{Nor10}
\mathcal{R}_3(\xi,t):=\frac{e^{-i\Theta(\xi,t)}}{(2\pi)^3}\sum_{\iota_1\in\{+,-\}}\int_{\mathbb{R}^3}ie^{it\Lambda_{kg}(\xi)}\widehat{u_{high}}(\eta,t)e^{-it\Lambda_{kg,\iota_1}(\xi-\eta)}\widehat{V^{kg,\iota_1}}(\xi-\eta,t)\mathfrak{q}_{\iota_1}(\xi-\eta)\,d\eta.
\end{equation}

\subsection{Improved control} We prove now our main $Z$-norm estimate for the profile $V^{kg}$.

\begin{proposition}\label{BootstrapZ2}
For any $t\in[0,T]$ we have
\begin{equation*}
\|V^{kg}(t)\|_{Z_{kg}}\lesssim\varep_0.
\end{equation*}
\end{proposition}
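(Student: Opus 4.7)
The strategy is to work with the renormalized profile $V^{kg}_\ast$ from \eqref{Nor5}. Since $\Theta(\xi,t)$ is real-valued, one has the pointwise identity $|\widehat{V^{kg}_\ast}(\xi,t)|=|\widehat{V^{kg}}(\xi,t)|$, and hence
\[
\|P_kV^{kg}\|_{L^2}=\|P_kV^{kg}_\ast\|_{L^2},\qquad \|\widehat{P_kV^{kg}}\|_{L^\infty}=\|\widehat{P_kV^{kg}_\ast}\|_{L^\infty},
\]
so that $\|V^{kg}(t)\|_{Z_{kg}}=\|V^{kg}_\ast(t)\|_{Z_{kg}}$. Proposition \ref{IniDat} already gives $\|V^{kg}_\ast(0)\|_{Z_{kg}}\lesssim\varepsilon_0$, and by \eqref{Nor7} the modified profile satisfies $\partial_s\widehat{V^{kg}_\ast}=\mathcal{R}_1+\mathcal{R}_2+\mathcal{R}_3$. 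The proof therefore reduces to showing, after Littlewood--Paley localization of the output frequency $\xi$, that each $\int_0^t\mathcal{R}_j(\xi,s)\,ds$ contributes $\lesssim\varepsilon_1^2$ (hence $\lesssim\varepsilon_0$) in both the Fourier-$L^\infty_\xi$ and the $L^2_\xi$ components of the $Z_{kg}$ norm.

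The remainders $\mathcal{R}_1$ and $\mathcal{R}_3$ admit relatively direct bilinear treatments. For $\mathcal{R}_1$ the phase $\Lambda_{kg}(\xi)+\Lambda_{kg}(\xi-\eta)\gtrsim 1$ is uniformly elliptic; a normal-form integration by parts in $s$, controlled through the $L^1$-Fourier multiplier bound of Lemma \ref{pha2}, transfers $\partial_s$ onto one of $V^{kg,-}$, $V^{wa,\pm}$, or $\widehat{u_{low}}$, and the quadratic time-derivative bounds from Lemmas \ref{abcLe}, \ref{dtv7}, \ref{dtv8} produce an integrable-in-time integrand. For $\mathcal{R}_3$ the input $\widehat{u_{high}}$ is supported on $|\eta|\gtrsim\langle s\rangle^{-p}$, and the $Z_{wa}$ pointwise bound \eqref{vcx1.2} yields
\[
\|\widehat{u_{high}}(\cdot,s)\|_{L^1_\eta}\lesssim\varepsilon_1\sum_{2^k\gtrsim\langle s\rangle^{-p}}2^{k(1-4\delta)}\lesssim\varepsilon_1\langle s\rangle^{-p(1-4\delta)};
\]
combined with the $Z_{kg}$-pointwise bound on $\widehat{V^{kg}}$ this gives an integrable contribution, and when the $(+,+)$ phase $\Lambda_{kg}(\xi)-\Lambda_{kg}(\xi-\eta)$ is nearly resonant one recovers additional decay by integration by parts in $\eta$, using that its gradient is $\gtrsim|\eta|\langle\xi\rangle^{-2}\gtrsim\langle s\rangle^{-p}\langle\xi\rangle^{-2}$ on that support.

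The main obstacle is $\mathcal{R}_2$, which measures the precise error in the ansatz underlying modified scattering. I would split its integrand into a \emph{phase difference} plus a \emph{symbol difference},
\[
\bigl\{e^{is[\Lambda_{kg}(\xi)-\Lambda_{kg}(\xi-\eta)]}-e^{is\eta\cdot\nabla\Lambda_{kg}(\xi)}\bigr\}\widehat{V^{kg}}(\xi-\eta)\mathfrak{q}_+(\xi-\eta)+e^{is\eta\cdot\nabla\Lambda_{kg}(\xi)}\bigl\{\widehat{V^{kg}}(\xi-\eta)\mathfrak{q}_+(\xi-\eta)-\widehat{V^{kg}}(\xi)\mathfrak{q}_+(\xi)\bigr\}.
\]
The phase difference is bounded pointwise by the second-order Taylor remainder $s|\eta|^2\langle\xi\rangle^{-1}$, and the symbol difference by $|\eta|\sup_{|\zeta-\xi|\leq|\eta|}|\partial_\zeta(\widehat{V^{kg}}\mathfrak{q}_+)(\zeta)|$. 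On the support of $\widehat{u_{low}}$ one has $|\eta|\lesssim\langle s\rangle^{-p}$; then the phase contribution, multiplied against $\|\widehat{u_{low}}\|_{L^1_\eta}\lesssim\varepsilon_1\langle s\rangle^{-p(1-4\delta)}$ and the bootstrapped pointwise $Z_{kg}$ control on $\widehat{V^{kg}}$, scales like $\varepsilon_1^2\langle s\rangle^{1-3p+4\delta p}$ which is integrable in time for $p=0.68$, and the symbol contribution scales similarly with one $\langle s\rangle^{-p}$ in place of $\langle s\rangle^{1-2p}$. The delicate point, which I expect to be the hardest, is that $\partial_\xi\widehat{V^{kg}}$ is only controlled in $L^2_\xi$ via \eqref{bootstrap3.2} and not pointwise in $\xi$; I would handle the symbol-difference piece by Cauchy--Schwarz in $\eta$ restricted to $\{|\eta|\lesssim\langle s\rangle^{-p}\}$, absorbing the $L^2_\eta$ average into the weighted $L^2_\xi$ control of the profile and, where a pointwise-in-$\xi$ estimate is needed, invoking the angular Sobolev embedding \eqref{triv8} with one rotation vector field from $\mathcal{V}_1$. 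The $L^2$ part of the $Z_{kg}$-norm is handled by the same scheme without the pointwise step, using the one-derivative slack between $N(0)=N_0+3d$ and the $Z_{kg}$ weight $N_0+3d-1$ at high frequencies and the low-frequency structure \eqref{gan1} of $\mathcal{N}^{kg}$ at low frequencies.
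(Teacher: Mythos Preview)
Your overall architecture (renormalize, reduce to time-integrability of $\mathcal{R}_1,\mathcal{R}_2,\mathcal{R}_3$) matches the paper's, and your treatment of $\mathcal{R}_1$ and the phase-difference half of $\mathcal{R}_2$ is essentially the paper's argument. There are, however, two genuine gaps.

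\textbf{The estimate for $\mathcal{R}_3$ fails as written.} Your claim
\[
\|\widehat{u_{high}}(\cdot,s)\|_{L^1_\eta}\lesssim\varepsilon_1\sum_{2^k\gtrsim\langle s\rangle^{-p}}2^{k(1-4\delta)}\lesssim\varepsilon_1\langle s\rangle^{-p(1-4\delta)}
\]
is incorrect: that geometric series has ratio $>1$, so it is dominated by its \emph{largest} term ($k\approx 0$), not its smallest, and the right-hand side is only $O(\varepsilon_1)$. Consequently your direct $L^1_\eta\times L^\infty_\xi$ bound gives a contribution of size $\varepsilon_1^2$ per unit time, which is not integrable. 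The fallback you sketch (integration by parts in $\eta$) would require pointwise $\eta$-smoothness of $\widehat{V^{kg}}$, which you do not have. The paper instead integrates by parts \emph{in time} on $\mathcal{R}_3$ (Lemma~\ref{BootstrapZ5}), using exactly the phase lower bound $|\Phi_{(kg,+)\mu\nu}(\xi,\eta)|\gtrsim|\eta|2^{-2k_1^+}\gtrsim\langle s\rangle^{-p}2^{-2k_1^+}$ on the support of $u_{high}$; this buys a factor $\langle s\rangle^{p}$ and makes the boundary terms and the $\dot\Theta$-term elementary, but the term where $\partial_s$ lands on the Klein--Gordon profile forces one to expand $\partial_sV^{kg}=e^{is\Lambda_{kg}}\mathcal{N}^{kg}$ and perform a further trilinear analysis with integration by parts in $\eta$ on the resulting cubic integral (Step~2 of that lemma). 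This step is not captured by your proposal.

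\textbf{The symbol-difference part of $\mathcal{R}_2$.} You correctly flag the obstruction that $\partial_\xi\widehat{V^{kg}}$ is controlled only in $L^2_\xi$, and your proposed fix (Cauchy--Schwarz in $\eta$ plus angular Sobolev with one rotation) is not obviously enough for the pointwise-in-$\xi$ component of $Z_{kg}$. The paper sidesteps this entirely by decomposing $V^{kg}=\sum f^{kg,+}_{j_1,k_1}$ via the physical-space localizations $Q_{jk}$: each atom satisfies $|D_\xi^\alpha\widehat{f^{kg,+}_{j_1,k_1}}|\lesssim 2^{|\alpha|j_1}\|\widehat{f^{kg,+}_{j_1,k_1}}\|_{L^\infty}$ (see \eqref{Linfty3.4}), so for small $j_1$ the difference $\widehat{f^{kg,+}_{j_1,k_1}}(\xi)\mathfrak{q}_+(\xi)-\widehat{f^{kg,+}_{j_1,k_1}}(\xi-\eta)\mathfrak{q}_+(\xi-\eta)$ is bounded pointwise by $|\eta|2^{j_1}$ times an $L^\infty$ quantity (see \eqref{Nor65.5}), while for large $j_1$ one drops the cancellation and uses the weighted $L^2$ bound \eqref{vcx1.3}. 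This atomic decomposition is the missing idea in your treatment of $\mathcal{R}_2$.

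Finally, note that the paper does not run the $\mathcal{R}_j$ argument for all $k$: for $k\geq 2H'(1)\delta m$ (resp.\ $k\geq 2\delta m$) the $L^\infty_\xi$ (resp.\ $L^2_\xi$) part of $Z_{kg}$ is recovered directly from the improved weighted $L^2$ bounds of Proposition~\ref{DiEs1} together with the angular Sobolev estimate \eqref{Linfty3.3} (resp.\ from the energy bound of Proposition~\ref{BootstrapEE6}). Without this high-frequency shortcut the losses in the normal-form multipliers at large $k$ would not close.
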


The rest of this section is concerned with the proof of this proposition. Since $|\widehat{V^{kg}}(\xi,t)|=|\widehat{V^{kg}_\ast}(\xi,t)|$, in view of the definition \eqref{sec5.1} it suffices to prove that
\begin{equation}\label{Nor40}
\|\varphi_k(\xi)\{\widehat{V^{kg}_\ast}(\xi,t_2)-\widehat{V^{kg}_\ast}(\xi,t_1)\}\|_{L^\infty_\xi}\lesssim \varep_02^{-\delta m/2}2^{-k^-/2+\kappa k^-}2^{-N_0k^++d'k^+}
\end{equation}
and
\begin{equation}\label{Nor41}
\|\varphi_k(\xi)\{\widehat{V^{kg}_\ast}(\xi,t_2)-\widehat{V^{kg}_\ast}(\xi,t_1)\}\|_{L^2_\xi}\lesssim \varep_02^{-\delta m/2}2^{k^-+\kappa k^-}2^{-N_0k^+-(3d-2)k^+},
\end{equation}
for any $k\in\mathbb{Z}$, $m\geq 1$, and $t_1,t_2\in[2^m-2,2^{m+1}]\cap [0,T]$.

\begin{lemma}\label{transfer} 
The bounds \eqref{Nor40} and \eqref{Nor41} hold if $k\geq \kappa m/100-10$ or if $k\leq -\kappa m$.
\end{lemma}

\begin{proof}
Notice that the bound \eqref{Nor41} follows from Proposition \ref{BootstrapEE5} if $2^k\gtrsim 2^{2\delta m}$. It remains to show that if  $k\notin[-\kappa m,\kappa m/100-10]$ and $t\in[2^m-2,2^{m+1}]\cap [0,T]$ then
\begin{equation}\label{nar1}
\|\varphi_k(\xi)\widehat{V^{kg}}(\xi,t)\|_{L^\infty_\xi}\lesssim \varep_02^{-\delta m/2}2^{-k^-/2+\kappa k^-}2^{-N_0k^++d'k^+}.
\end{equation}

{\bf{Step 1.}} It follows from Proposition \ref{DiEs1} that
\begin{equation*}
2^{k^+}\|\varphi_{k}(\xi)(\partial_{\xi_l}\widehat{V^{kg}_{\mathcal{L}}})(\xi,t)\|_{L^2_\xi}\lesssim\varep_0\langle t\rangle^{H(n+1)\delta}2^{-N_0k^++(n+1)dk^+},
\end{equation*}
for any $t\in[0,T]$, $k\in\mathbb{Z}$, $l\in\{1,2,3\}$, and $\mathcal{L}\in\mathcal{V}_n$, $n\in [0,N_1-1]$. Using Lemma \ref{hyt1}, we have
\begin{equation}\label{Nor41.2}
\sup_{j\geq -k^-}2^j\|Q_{jk}V^{kg}_{\mathcal{L}}(t)\|_{L^2}\lesssim\varep_0\langle t\rangle^{H(n+1)\delta}2^{-N_0k^+-k^++(n+1)dk^+}.
\end{equation}
Using this and \eqref{consu2} it follows that
\begin{equation}\label{Nor41.1}
\|\widehat{P_kV^{kg}}(t)\|_{L^\infty}\lesssim \varep_0\langle t\rangle^{H(2)\delta}2^{-k^-/2}2^{-N_0k^+-k^+/2+(3d/2)k^+}.
\end{equation}
The bound \eqref{nar1} follows if $2^k\gtrsim 2^{4H(2)\delta m}$, since $d'=3d/2$. 

It remains to prove \eqref{nar1} when $k\leq -\kappa m$. We use again \eqref{Nor41.2} and \eqref{consu2} to estimate
\begin{equation*}
\|\widehat{P_kV^{kg}}(t)\|_{L^\infty}\lesssim 2^{-3k/2}\big\{\sup_{j\geq -k^-}\|Q_{jk}V^{kg}(t)\|_{H^{0,1}_\Omega}\big\}^{(1-\delta)/2}\big\{\varep_0\langle t\rangle^{H(2)\delta}2^k\big\}^{(1+\delta)/2}.
\end{equation*}
Therefore, recalling that $\kappa^2=400\delta$, for \eqref{nar1} it suffices to prove that
\begin{equation*}
\|P_kV^{kg}(t)\|_{H^{0,1}_\Omega}\lesssim\varep_0\langle t\rangle^{4H(2)\delta}2^{k+10\kappa k}
\end{equation*}
if $k\leq-\kappa m$ and  $t\in[2^m-2,2^{m+1}]\cap [0,T]$. In view of \eqref{hyt5}, for this is suffices to prove that
\begin{equation}\label{nar4}
\|P_lV^{kg}_{\Omega}(t)\|_{L^2}+\sum_{a=1}^3\|\varphi_l(\xi)(\partial_{\xi_a}\widehat{V^{kg}_{\Omega}})(\xi,t)\|_{L^2}\lesssim\varep_0\langle t\rangle^{4H(2)\delta}2^{10\kappa l},
\end{equation}
for any $l\in\mathbb{Z}$ and $t\in[0,T]$, where $\Omega\in\{Id,\Omega_{23},\Omega_{31},\Omega_{12}\}$.

The bound on the first term in the left-hand side of \eqref{nar4} follows from \eqref{Nor41.2}. To bound the remaining terms we use the identities \eqref{wer1}. For \eqref{nar4} it suffices to prove that
\begin{equation}\label{nar5}
\|P_l\Gamma_aU^{kg}_{\Omega}(t)\|_{L^2}+\|\varphi_l(\xi)(\partial_{\xi_a}\widehat{\mathcal{N}^{kg}_{\Omega}})(\xi,t)\|_{L^2}\lesssim\varep_0\langle t\rangle^{4H(2)\delta}2^{10\kappa l},
\end{equation}
for any $l\in\mathbb{Z}$, $t\in[0,T]$, and $a\in\{1,2,3\}$. The term $\|P_l\Gamma_aU^{kg}_{\Omega}(t)\|_{L^2}$ is bounded as claimed due to \eqref{Nor41.2} (with $n=2$). Therefore, it remains to prove the bilinear estimates
\begin{equation}\label{nar6}
\begin{split}
\|P_k\mathcal{N}^{kg}_{\Omega}(\xi,t)\|_{L^2}&\lesssim\varep_0\langle t\rangle^{4H(2)\delta}2^{10\kappa k}\min(\langle t\rangle^{-1},2^{k^-})2^{-2k^+}\\
\|\varphi_k(\xi)(\partial_{\xi_a}\widehat{\mathcal{N}^{kg}_{\Omega}})(\xi,t)\|_{L^2}&\lesssim\varep_0\langle t\rangle^{4H(2)\delta}2^{10\kappa k}
\end{split}
\end{equation}
for any $k\in\mathbb{Z}$, $t\in[0,T]$, and $a\in\{1,2,3\}$.

{\bf{Step 2.}} The bounds \eqref{nar6} are similar to the bounds in Lemma \ref{dtv8}. The only issue is to gain the factors $2^{10\kappa k}$ and we are allowed to lose small powers $\langle t\rangle^{C\delta}$. We may assume $k\leq 0$ and define $I$ as in \eqref{abc9}--\eqref{abc36}. For \eqref{nar6} it suffices to prove that
\begin{equation}\label{nar7}
\sum_{(k_1,k_2)\in\mathcal{X}_k}2^{k_1^+-k_2}\|P_kI[P_{k_1}U^{kg,\iota_1}_{\mathcal{L}_1},P_{k_2}U^{wa,\iota_2}_{\mathcal{L}_2}](t)\|_{L^2}\lesssim \varep_1^2\langle t\rangle^{4H(2)\delta}2^{10\kappa k}\min(\langle t\rangle^{-1},2^{k^-})
\end{equation}
and
\begin{equation}\label{nar8}
\sum_{(k_1,k_2)\in\mathcal{X}_k}2^{k_1^+-k_2}\|\varphi_k(\xi)(\partial_{\xi_a}\mathcal{F}\{I[P_{k_1}U^{kg,\iota_1}_{\mathcal{L}_1},P_{k_2}U^{wa,\iota_2}_{\mathcal{L}_2}]\})(\xi,t)\|_{L^2_\xi}\lesssim\varep_1^2\langle t\rangle^{4H(2)\delta}2^{10\kappa k},
\end{equation}
for any $\iota_1,\iota_2\in\{+,-\}$, $a\in\{1,2,3\}$, $\mathcal{L}_1\in\mathcal{V}_{n_1}$, $\mathcal{L}_2\in\mathcal{V}_{n_2}$, $n_1+n_2\leq 1$.

Using the $L^2$ estimates \eqref{abc22.1} and \eqref{abc21}, we bound
\begin{equation}\label{nar9}
\begin{split}
2^{k_1^+-k_2}\|P_kI[P_{k_1}U^{kg,\iota_1}_{\mathcal{L}_1},P_{k_2}&U^{wa,\iota_2}_{\mathcal{L}_2}](t)\|_{L^2}\lesssim2^{k_1^+-k_2}2^{3\min(k,k_1,k_2)/2}\|P_{k_1}U_{\mathcal{L}_1}^{kg}(t)\|_{L^2}\|P_{k_2}U_{\mathcal{L}_2}^{wa}(t)\|_{L^2}\\
&\lesssim\varep_1^2\langle t\rangle^{2H(2)\delta}2^{k_1-k_2/2}2^{3\min(k,k_1,k_2)/2}2^{-4(k_1^++k_2^+)}.
\end{split}
\end{equation}
This suffices to prove \eqref{nar7} when $2^{k^-}\lesssim\langle t\rangle^{-1}$. On the other hand, if $2^{k^-}\geq \langle t\rangle^{-1}$ then we estimate, using also \eqref{abc23.2}, 
\begin{equation*}
\begin{split}
2^{k_1^+-k_2}\|P_kI[P_{k_1}U^{kg,\iota_1}_{\mathcal{L}_1},P_{k_2}&U^{wa,\iota_2}_{\mathcal{L}_2}](t)\|_{L^2}\lesssim2^{k_1^+-k_2}\|P_{k_1}U_{\mathcal{L}_1}^{kg}(t)\|_{L^2}\|P_{k_2}U_{\mathcal{L}_2}^{wa}(t)\|_{L^\infty}\\
&\lesssim\varep_1^22^{k_1^-}\langle t\rangle^{-1+2H(2)\delta}\min(1,2^{k_2^-}\langle t\rangle)2^{-4(k_1^++k_2^+)}
\end{split}
\end{equation*}
This suffices to bound the contribution of the pairs $(k_1,k_2)\in\mathcal{X}_k$ in \eqref{nar7} for which $2^{k_1}\lesssim 2^{k/10}$. For the remaining pairs we have $\min(k_1,k_2)\geq k/10+10$, $|k_1-k_2|\leq 4$, and we use the decomposition \eqref{ku4} to estimate
\begin{equation*}
2^{k_1^+-k_2}\|P_kI[P_{k_1}U^{kg,\iota_1}_{\mathcal{L}_1},P_{k_2}U^{wa,\iota_2}_{\mathcal{L}_2}](t)\|_{L^2}\lesssim\varep_1^22^{-k_1/2}2^{-k_2/2}\langle t\rangle^{-3/2+\kappa}2^{-4(k_1^++k_2^+)}.
\end{equation*}
This suffices to complete the proof of \eqref{nar7}.

To prove \eqref{nar8} we write $U^{kg,\iota_1}_{\mathcal{L}_1}=e^{-it\Lambda_{kg,\iota_1}}V^{kg,\iota_1}_{\mathcal{L}_1}$ and notice that the $\partial_{\xi_a}$ derivative can hit either the phase $e^{-it\Lambda_{kg,\iota_1}(\xi-\eta)}$, or the multiplier $m_1(\xi-\eta)$, or the profile $\widehat{P_{k_1}V^{kg,\iota_1}_{\mathcal{L}_1}}(\xi-\eta)$. In the first case the derivative effectively corresponds to multiplying by factors $\lesssim \langle t\rangle$, and changing the multiplier $m_1$, in a way that still satisfies \eqref{abc9}. The desired estimates follow from \eqref{nar7}. In the case when the $\partial_{\xi_a}$ derivative hits the function $m_1(\xi-\eta)\widehat{P_{k_1}V^{kg,\iota_1}_{\mathcal{L}_1}}(\xi-\eta)$, it suffices to prove that 
\begin{equation*}
\sum_{(k_1,k_2)\in\mathcal{X}_k}2^{k_1^+-k_2}\|P_kI[U^{kg,\iota_1}_{\mathcal{L}_1,\ast a,k_1},P_{k_2}U^{wa,\iota_2}_{\mathcal{L}_2}](t)\|_{L^2}\lesssim\varep_1^2\langle t\rangle^{4H(2)\delta}2^{10\kappa k},
\end{equation*}
where, as in the proof of Lemma \ref{dtv8}, $\widehat{U^{kg,\iota_1}_{\mathcal{L}_1,\ast a,k_1}}(\xi,t)=e^{-it\Lambda_{kg,\iota_1}(\xi)}\partial_{\xi_a}\{\varphi_{k_1}\cdot m_1\cdot\widehat{V^{kg,\iota_1}_{\mathcal{L}_1}}\}(\xi,t)$. This follows from the $L^2$ bounds \eqref{abc48.3} and \eqref{abc21}.
\end{proof}

We return now to the proof of the main estimates \eqref{Nor40}--\eqref{Nor41}. In view of the identity \eqref{Nor7}, it suffices to prove that, for $a\in\{1,2,3\}$,
\begin{equation}\label{Nor50}
\Big\|\varphi_k(\xi)\int_{t_1}^{t_2}\mathcal{R}_a(\xi,s)\,ds\Big\|_{L^\infty_\xi}\lesssim \varep_02^{-\delta m/2}2^{-k^-/2+\kappa k^-}2^{-(N_0+4d)k^+},
\end{equation}
if $k\in[-\kappa m,\kappa m/100-10]$. These bounds are proved in Lemmas \ref{BootstrapZ3}--\ref{BootstrapZ5} below. 

In some estimates we need to use integration by parts in time (normal forms). For any $s\in[0,T]$ we define the bilinear operators $T^{kg}_{\mu\nu}$ by
\begin{equation}\label{Nor53.1}
T^{kg}_{\mu\nu}[f,g](\xi,s):=\int_{\mathbb{R}^3}\frac{e^{is\Phi_{(kg,+)\mu\nu}(\xi,\eta)}}{\Phi_{(kg,+)\mu\nu}(\xi,\eta)}m_1(\xi-\eta)\widehat{f}(\xi-\eta,s)\cdot m_2(\eta)\widehat{g}(\eta,s)\,d\eta,
\end{equation}
where $\Phi_{(kg,+)\mu\nu}(\xi,\eta)=\Lambda_{kg}(\xi)-\Lambda_{\mu}(\xi-\eta)-\Lambda_{\nu}(\eta)$ (see \eqref{on9.2}) and $m_1,m_2$ are as in \eqref{abc9}.

We will sometimes need $L^\infty$ bounds on the localized profiles $\widehat{V_{j,k}^{kg,\pm}}$, $\widehat{V_{j,k}^{wa,\pm}}$, and the time derivative $\widehat{P_{k}\partial_s V^{kg,\pm}}$. Since $2^{j}\|Q_{j,k}V^{kg}(s)\|_{H^{0,1}_\Omega}\lesssim \varep_12^{H(2)\delta m}2^{-N_0k^++2dk^+}$, see \eqref{abc22}, it follows from \eqref{Linfty3.3} that
\begin{equation}\label{Nor63}
\|\widehat{V_{j,k}^{kg,\pm}}(s)\|_{L^\infty}\lesssim \varep_12^{H(2)\delta m}2^{-j/2}2^{-k}2^{\delta(j+k)}2^{-N_0k^++2dk^+},
\end{equation}
for any $s\in[t_1,t_2]$. Similarly
\begin{equation}\label{Nor63.1}
\|\widehat{V_{j,k}^{wa,\pm}}(s)\|_{L^\infty}\lesssim \varep_12^{H(2)\delta m}2^{-j/2}2^{-3k/2}2^{\delta(j+k)}2^{-N_0k^++2dk^+}.
\end{equation}
Finally, to bound $\widehat{P_{k}\partial_s V^{kg,\pm}}$, recall that
\begin{equation}\label{Nor56.1}
\begin{split}
\sup_{j\geq -k^-}\,\|Q_{jk}(\partial_t V^{kg,\pm})(t)\|_{H^{0,1}_\Omega}\lesssim \varep_1^2\langle t\rangle^{-1+H''_{kg}(1)\delta}2^{-N(2)k^+-5k^+},\\
\sup_{j\geq -k^-}\,2^{j+k}\|Q_{jk}(\partial_t V^{kg,\pm})(t)\|_{H^{0,1}_\Omega}\lesssim \varep_1^22^{k}\langle t\rangle^{H''_{kg}(1)\delta}2^{-N(2)k^+-5k^+},
\end{split}
\end{equation}
if $2^k\gtrsim \langle t\rangle^{-1}$, see Lemma \ref{dtv8} and Lemma \ref{hyt1}. In particular, using \eqref{consu2},
\begin{equation}\label{Nor56.2}
\|\widehat{P_k(\partial_t V^{kg,\pm})}(t)\|_{L^\infty}\lesssim \varep_1^2\langle t\rangle^{-1/2+H(2)\delta}2^{-k^-}2^{-N(2)k^+}.
\end{equation}

\begin{lemma}\label{BootstrapZ3} The bounds \eqref{Nor50} hold if $a=1$, $m\geq 1/\kappa$, and  $k\in[-\kappa m,\kappa m/100]$.
\end{lemma}

\begin{proof} We examine the formula \eqref{Nor9}, substitute $u=i\Lambda_{wa}^{-1}(U^{wa,+}-U^{wa,-})/2$, and decompose the input functions dyadically in frequency. Let $\widehat{U_{low}^{wa,\iota_2}}(\xi,s):=\varphi_{\leq 0}(\langle s\rangle^{p_0}\xi)\widehat{U^{wa,\iota_2}}(\xi,s)$ and $\widehat{V_{low}^{wa,\iota_2}}(\xi,s):=\varphi_{\leq 0}(\langle s\rangle^{p_0}\xi)\widehat{V^{wa,\iota_2}}(\xi,s)$. With $I$ as in \eqref{abc36} and $\iota_2\in\{+,-\}$, it suffices to prove that
\begin{equation}\label{Nor52}
\begin{split}
\sum_{(k_1,k_2)\in\mathcal{X}_k}2^{k_1^+-k_2}\Big\|\varphi_k(\xi)\int_{t_1}^{t_2}e^{is\Lambda_{kg}(\xi)-i\Theta(\xi,s)}&\mathcal{F}\{I[P_{k_1}U^{kg,-},P_{k_2}U^{wa,\iota_2}_{low}]\}(\xi,s)\,ds\Big\|_{L^\infty_\xi}\\
&\lesssim \varep_1^22^{-\delta m/2}2^{-k^-/2+\kappa k^-}2^{-(N_0+4d)k^+}.
\end{split}
\end{equation}

We estimate first, using \eqref{vcx1.1},
\begin{equation}\label{Nor53.5}
\begin{split}
2^{k_1^+-k_2}\big\|\mathcal{F}\{I[P_{k_1}U^{kg,-},&P_{k_2}U^{wa,\iota_2}_{low}]\}(\xi,s)\big\|_{L^\infty_\xi}\lesssim 2^{k_1^+-k_2}\|\widehat{P_{k_1}U^{kg,-}}\|_{L^\infty}\|\widehat{P_{k_2}U^{wa,\iota_2}_{low}}\|_{L^1}\\
&\lesssim \varep_1^22^{k_2}2^{-k_1^-/2}2^{-N_0k_1^++(d'+1)k_1^+}.
\end{split}
\end{equation} 
This suffices to control the contribution of the pairs $(k_1,k_2)$ for which $k_2\leq -1.01m$. Thus it remains to prove that
\begin{equation}\label{Nor54}
\begin{split}
\Big|\int_{t_1}^{t_2}&\int_{\mathbb{R}^3}e^{is\Lambda_{kg}(\xi)-i\Theta(\xi,s)}m_1(\xi-\eta)e^{is\Lambda_{kg}(\xi-\eta)}\widehat{P_{k_1}V^{kg,-}}(\xi-\eta,s)\\
&\times m_2(\eta)e^{-is\Lambda_{wa,\iota_2}(\eta)}\widehat{P_{k_2}V^{wa,\iota_2}_{low}}(\eta,s)\, d\eta ds\Big|\lesssim \varep_1^22^{-\kappa m}2^{k_2},
\end{split}
\end{equation}
for any $\xi$ with $|\xi|\in[2^{k_1-4},2^{k_1+4}]$, provided that
\begin{equation}\label{Nor55}
k_2\in [-1.01m,-p_0m+10],\qquad k_1\in [\kappa m-10,\kappa m/100+10]. 
\end{equation}

To prove \eqref{Nor54} we integrate by parts in time. Notice that $\Phi_{\sigma\mu\nu}(\xi,\eta)\gtrsim 1$ in the support of the integral if $\sigma=(kg,+)$, $\mu=(kg,-),\,\nu=(wa,\iota_2)$.{\footnote{Here it is important that $\mu\neq (kg,+)$, so the phase is nonresonant. The nonlinear correction \eqref{Nor5} was done precisely to weaken the corresponding resonant contribution of the profile $V^{kg,+}$.}} The left-hand side of \eqref{Nor54} is dominated by $C(J_1+J_2+J_3)$, where, with $\mu=(kg,-)$ and $\nu=(wa,\iota_2)$ and $T^{kg}_{\mu\nu}$ defined as in \eqref{Nor53.1},
\begin{equation}\label{Nor56}
\begin{split}
&J_1:=\sum_{s\in\{t_1,t_2\}}|T_{\mu\nu}^{kg}[P_{k_1}V^\mu,P_{k_2}V^\nu_{low}](\xi,s)|+\int_{t_1}^{t_2}|\dot{\Theta}(\xi,s)|\cdot|T_{\mu\nu}^{kg}[P_{k_1}V^\mu,P_{k_2}V^\nu_{low}](\xi,s)|\,ds,\\
&J_2:=\int_{t_1}^{t_2}|T_{\mu\nu}^{kg}[\partial_s(P_{k_1}V^\mu),P_{k_2}V^\nu_{low}](\xi,s)|\,ds,\\
&J_3:=\int_{t_1}^{t_2}|T_{\mu\nu}^{kg}[P_{k_1}V^\mu,\partial_s(P_{k_2}V^\nu_{low})](\xi,s)|\,ds.
\end{split}
\end{equation}

Assuming $k_1,k_2$ as in \eqref{Nor55}, we estimate, as in \eqref{Nor53.5},
\begin{equation*}
\begin{split}
|T_{\mu\nu}^{kg}[P_{k_1}V^\mu,P_{k_2}V^\nu_{low}](\xi,s)|&\lesssim\varep_1^22^{2k_2}2^{\kappa m},\\
2^m|T_{\mu\nu}^{kg}[P_{k_1}V^\mu,\partial_s(P_{k_2}V^\nu_{low})](\xi,s)|&\lesssim\varep_1^22^{2k_2}2^{\kappa m}.
\end{split}
\end{equation*}
Here we used the bounds  $\|\widehat{P_{k_2}V^\nu_{low}}\|_{L^1}\lesssim 2^{2k_2}2^{\delta m}$ and $\|\partial_s\widehat{P_{k_2}V^\nu_{low}}\|_{L^1}\lesssim 2^{2k_2}2^{-m+10\delta m}$, see \eqref{abc21} and \eqref{abc30}. Since $\dot{\Theta}(\xi,s)=\mathfrak{q}_+(\xi)u_{low}(s\xi/\Lambda_{kg}(\xi),s)$, see \eqref{Nor4}, it follows from \eqref{abc23.2} that $|\dot{\Theta}(\xi,s)|\lesssim 2^{k_1^+}2^{-m+12\delta m}$. The desired bounds for the terms $J_1$ and $J_3$ follow.

To estimate $J_2$ we use \eqref{Nor56.2}. Therefore
\begin{equation*}
|T_{\mu\nu}^{kg}[\partial_s(P_{k_1}V^\mu),P_{k_2}V^\nu_{low}](\xi,s)|\lesssim\varep_1^22^{-m/2+\kappa m}2^{2k_2},
\end{equation*} 
and the desired estimates follow since $2^{k_2}\lesssim 2^{-p_0m}$. This completes the proof of the lemma.
\end{proof}

\begin{lemma}\label{BootstrapZ4}  The bounds \eqref{Nor50} hold if $a=2$, $m\geq 1/\kappa$, and  $k\in[-\kappa m,\kappa m/100]$.
\end{lemma}

\begin{proof} We decompose $V^{kg}=\sum_{(k_1,j_1)\in\mathcal{J}}V_{j_1,k_1}^{kg,+}$ as in \eqref{abc20.5}. For \eqref{Nor50} it suffices to prove that
\begin{equation}\label{Nor61}
\sum_{(k_1,j_1)\in\mathcal{J}}|A_{k;j_1,k_1}(\xi,s)|\lesssim \varep_1^22^{-1.005m}
\end{equation}
for any $s\in [2^{m-1},2^{m+1}]\cap [0,T]$ and $k\in[-\kappa m,\kappa m/100]$, where
\begin{equation}\label{Nor61.5}
\begin{split}
A_{k;j_1,k_1}(\xi,s):=\varphi_k(\xi)\int_{\mathbb{R}^3}\widehat{u_{low}}(\eta,t)\{&e^{is(\Lambda_{kg}(\xi)-\Lambda_{kg}(\xi-\eta))}\widehat{V_{j_1,k_1}^{kg,+}}(\xi-\eta,s)\mathfrak{q}_+(\xi-\eta)\\
&-e^{is(\xi\cdot\eta)/\Lambda_{kg}(\xi)}\widehat{V_{j_1,k_1}^{kg,+}}(\xi,s)\mathfrak{q}_+(\xi)\}\,d\eta.
\end{split}
\end{equation}

As a consequence of \eqref{Nor63}, without using the cancellation of the two terms in the integral,
\begin{equation*}
\begin{split}
|A_{k;j_1,k_1}(\xi,s)|&\lesssim \varep_12^{-j_1/2+\delta j_1}2^{H(2)\delta m}2^{-k_1}\|\widehat{u_{low}}(s)\|_{L^1}\lesssim \varep_12^{-j_1/2+\delta j_1}2^{H(2)\delta m}2^{-k_1}2^{-p_0m+\delta m}.
\end{split}
\end{equation*}
Since $2^{-k_1}\lesssim 2^{\kappa m/4}$, this suffices to control the contribution of the terms in \eqref{Nor61} corresponding to large values of $j_1$, i.e. $2^{j_1/2}\gtrsim 2^{(1.01-p_0)m}$. 

On the other hand, if $j_1/2\leq (1.01-p_0)m=0.33 m$ then we estimate
\begin{equation}\label{Nor65}
\begin{split}
\big|e^{is(\Lambda_{kg}(\xi)-\Lambda_{kg}(\xi-\eta))}-e^{is(\xi\cdot\eta)/\Lambda_{kg}(\xi)}\big|&\lesssim 2^{-2p_0m+m},\\
\big|\widehat{V_{j_1,k_1}^{kg,+}}(\xi,s)\mathfrak{q}_+(\xi)-\widehat{V_{j_1,k_1}^{kg,+}}(\xi-\eta,s)\mathfrak{q}_+(\xi-\eta)\big|&\lesssim \varep_12^{j_1/2}2^{\kappa m}2^{-p_0m},
\end{split}
\end{equation}
provided that $|\xi|\approx 2^k$ and $|\eta|\lesssim 2^{-p_0m}$. Indeed, the first bound follows from the observation that $\nabla\Lambda_{kg}(\xi)=\xi/\Lambda_{kg}(\xi)$. The second bound follows from \eqref{Nor63}, once we notice that differentiation in $\xi$ of the localized profile $\widehat{V_{j_1,k_1}^{kg,+}}(\xi,s)$ corresponds essentially to multiplication by a factor of $2^{j_1}$. Therefore, since $\|\widehat{u_{low}}(s)\|_{L^1}\lesssim 2^{-p_0m+\delta m}$ and $2^{j_1/2}\lesssim 2^{0.33m}$,
\begin{equation*}
\begin{split}
|A_{k;j_1,k_1}(\xi,s)|&\lesssim \|\widehat{u_{low}}(s)\|_{L^1}\varep_12^{-p_0m+\kappa m}(2^{j_1/2}+2^{-p_0m+m}2^{-j_1/2})\lesssim\varep_12^{-2p_0m+2\kappa m}2^{0.33m}.
\end{split}
\end{equation*}
The contribution of the pairs $(k_1,j_1)$ for which $2^{j_1/2}\leq 2^{0.33m}$ is therefore bounded as claimed in \eqref{Nor61}. This completes the proof of the lemma.
\end{proof}

\begin{lemma}\label{BootstrapZ5}
The bounds \eqref{Nor50} hold if $a=3$, $m\geq 1/\kappa$, and  $k\in[-\kappa m,\kappa m/100]$.
\end{lemma}

\begin{proof} We examine the formula \eqref{Nor10}, write $u=i\Lambda_{wa}^{-1}(U^{wa,+}-U^{wa,-})/2$, and decompose the input functions dyadically in frequency. Let $U^{wa,\iota_2}_{high}:=U^{wa,\iota_2}-U^{wa,\iota_2}_{low}$ and $V^{wa,\iota_2}_{high}:=V^{wa,\iota_2}-V^{wa,\iota_2}_{low}$. As in the proof of Lemma \ref{BootstrapZ3}, after simple reductions it suffices to prove that
\begin{equation}\label{Nor72}
2^{k_1^+-k_2}\Big\|\varphi_k(\xi)\int_{t_1}^{t_2}e^{is\Lambda_{kg}(\xi)-i\Theta(\xi,s)}\mathcal{F}\{I[P_{k_1}U^{kg,\iota_1},P_{k_2}U^{wa,\iota_2}_{high}]\}(\xi,s)\,ds\Big\|_{L^\infty_\xi}\lesssim \varep_1^22^{-\kappa m}
\end{equation}
for any $\iota_1,\iota_2\in\{+,-\}$ and $k_1,k_2\in[-p_0m-10,m/10]$.

We integrate by parts in time to estimate
\begin{equation*}
\Big|\int_{t_1}^{t_2}e^{is\Lambda_{kg}(\xi)-i\Theta(\xi,s)}\mathcal{F}\{I[P_{k_1}U^{kg,\iota_1},P_{k_2}U^{wa,\iota_2}_{high}]\}(\xi,s)\,ds\Big|\lesssim J'_1(\xi)+J'_2(\xi)+J'_3(\xi),
\end{equation*}
where, with $\mu=(kg,\iota_1)$ and $\nu=(wa,\iota_2)$ and $T^{kg}_{\mu\nu}$ defined as in \eqref{Nor53.1},
\begin{equation*}
\begin{split}
&J'_1(\xi):=\sum_{s\in\{t_1,t_2\}}|T_{\mu\nu}^{kg}[P_{k_1}V^\mu,P_{k_2}V^\nu_{high}](\xi,s)|+\int_{t_1}^{t_2}|\dot{\Theta}(\xi,s)|\cdot|T_{\mu\nu}^{kg}[P_{k_1}V^\mu,P_{k_2}V^\nu_{high}](\xi,s)|\,ds,\\
&J'_2(\xi):=\int_{t_1}^{t_2}|T_{\mu\nu}^{kg}[\partial_s(P_{k_1}V^\mu),P_{k_2}V^\nu_{high}](\xi,s)|\,ds,\\
&J'_3(\xi):=\int_{t_1}^{t_2}|T_{\mu\nu}^{kg}[P_{k_1}V^\mu,\partial_s(P_{k_2}V^\nu_{high})](\xi,s)|\,ds.
\end{split}
\end{equation*}
Since $|\dot{\Theta}(\xi,s)|\lesssim 2^{-m+4\delta m}$ for \eqref{Nor72} it suffices to prove that for any $s\in[2^{m-1},2^{m+1}]$,
\begin{equation}\label{Nor75}
|\varphi_k(\xi)T_{\mu\nu}^{kg}[P_{k_1}V^\mu,P_{k_2}V^\nu_{high}](\xi,s)|\lesssim \varep_1^22^{-2\kappa m}2^{k_2^-},
\end{equation}
\begin{equation}\label{Nor75.2}
2^m|\varphi_k(\xi)T_{\mu\nu}^{kg}[\partial_s(P_{k_1}V^\mu),P_{k_2}V^\nu_{high}](\xi,s)|\lesssim \varep_1^22^{-2\kappa m}2^{k_2^-},
\end{equation}
\begin{equation}\label{Nor75.3}
2^m|\varphi_k(\xi)T_{\mu\nu}^{kg}[P_{k_1}V^\mu,\partial_s(P_{k_2}V^\nu_{high})](\xi,s)|\lesssim \varep_1^22^{-2\kappa m}2^{k_2^-},
\end{equation}
provided that $k\in [-\kappa m,\kappa m/100]$, $k_1,k_2\in[-pm-10,m/10]$, $\mu=(kg,\iota_1)$, $\nu=(wa,\iota_2)$, $\iota_1,\iota_2\in\{+,-\}$.

{\bf{Step 1: proof of \eqref{Nor75}.}} If $k_1\leq -4\kappa m$ (so $k_2\geq -\kappa m-20$) then we can just use $L^2$ bounds \eqref{abc21}--\eqref{abc22.1} on both inputs and Lemma \ref{pha2} (i) to prove \eqref{Nor75}. On the other hand, if  $k_1\geq -4\kappa m$ then we decompose $P_{k_1}V^{\mu}=\sum_{j_1}V_{j_1,k_1}^{\mu}$ and $P_{k_2}V^{\nu}=\sum_{j_2}V_{j_2,k_2}^{\nu}$ as in \eqref{abc20.5}. Let $\overline{k}:=\max(k,k_1,k_2)$ and recall that $|\Phi_{(kg,+)\mu\nu}(\xi,\eta)|\gtrsim 2^{k_2}2^{-2\overline{k}^+}$ in the support of the integrals defining the operators $T_{\mu\nu}^{kg}$ (see \eqref{pha3}). 

The contribution of the pairs $(V_{j_1,k_1}^{\mu},V_{j_2,k_2}^{\nu})$ for which $2^{\max(j_1,j_2)}\leq 2^{0.99m}2^{-6\overline{k}^+}$ is negligible,
\begin{equation}\label{Nor78}
|T^{kg}_{\mu\nu}[V_{j_1,k_1}^{\mu},V_{j_2,k_2}^{\nu}](\xi,s)|\lesssim \varep_1^22^{-2m}\,\,\,\,\text{ if }\,\,\,\,2^{\max(j_1,j_2)}\leq 2^{0.99m}2^{-6\overline{k}^+}.
\end{equation}
Indeed, this follows by integration by parts in $\eta$ (using Lemma \ref{tech5}), the bounds \eqref{Linfty3.4}, and the observation that the gradient of the phase admits a suitable lower bound $|\nabla_{\eta}\{s\Lambda_{kg,\iota_1}(\xi-\eta)+s\Lambda_{wa,\iota_2}(\eta)\}|\gtrsim \langle s\rangle 2^{-2k_1^+}$ in the support of the integral. On the other hand, we estimate
\begin{equation*}
\begin{split}
|T^{kg}_{\mu\nu}&[V_{j_1,k_1}^{\mu},V_{j_2,k_2}^{\nu}](\xi,s)|\lesssim 2^{-k_2}2^{2\overline{k}^+}2^{3k_2/2}\|\widehat{V_{j_1,k_1}^{\mu}}(s)\|_{L^\infty}\|\widehat{V_{j_2,k_2}^{\nu}}(s)\|_{L^2}\\
&\lesssim \varep_1^22^{\kappa m}2^{k_2/2}2^{2k_1^++2k_2^+}\cdot 2^{-k_1}2^{-j_1/2}2^{\delta(j_1+k_1)}2^{-10k_1^+}\cdot 2^{-j_2}2^{-k_2^-/2}2^{-10k_2^+}\\
&\lesssim \varep_1^22^{2\kappa m}2^{-k_1}2^{-j_1/2+\delta j_1}2^{-j_2}2^{-6\overline{k}^+},
\end{split}
\end{equation*}
using \eqref{vcx1.1}, \eqref{Nor63}, and Lemma \ref{pha2} (i). Recalling that $k_1\geq -4\kappa m$, this suffices to estimate the contribution of the pairs $(V_{j_1,k_1}^{\mu},V_{j_2,k_2}^{\nu})$ for which $2^{\max(j_1,j_2)}\geq 2^{0.99m}2^{-6\overline{k}^+}$. The bound \eqref{Nor75} follows.

{\bf{Step 2: proof of \eqref{Nor75.2}.}} Recall that $e^{-it\Lambda_{kg,\iota_1}}\partial_tV^{kg,\iota_1}(t)=\mathcal{N}^{kg}(t)$ for any $t\in[0,T]$. Notice that $\mathcal{F}\{P_{k_1}\mathcal{N}^{kg}\}(s)$ can be written as a sum of terms of the form
\begin{equation*}
\varphi_{k_1}(\gamma)\int_{\mathbb{R}^3}|\rho|^{-1}\langle\gamma-\rho\rangle m_3(\gamma-\rho)\widehat{U^{kg,\iota_3}}(\gamma-\rho)\widehat{U^{wa,\iota_4}}(\rho)\,d\rho,
\end{equation*}
where $\iota_3,\iota_4\in\{+,-\}$ and $m_3$ is a symbol as in \eqref{abc9}. We combine this with the formula \eqref{Nor53.1}. For \eqref{Nor75.2} it suffices to prove that, for any $\xi\in\mathbb{R}^3$,
\begin{equation}\label{Nor81}
\begin{split}
\Big|&\varphi_k(\xi)\int_{\mathbb{R}^3\times\mathbb{R}^3}\frac{\varphi_{k_1}(\xi-\eta)m_1(\xi-\eta)}{\Lambda_{kg}(\xi)-\Lambda_\mu(\xi-\eta)-\Lambda_\nu(\eta)}m_2(\eta)e^{-is\Lambda_\nu(\eta)}\widehat{P_{k_2}V^{wa,\iota_2}}(\eta,s)\\
&\times m_3(\xi-\eta-\rho)\langle\xi-\eta-\rho\rangle|\rho|^{-1}\widehat{U^{kg,\iota_3}}(\xi-\eta-\rho,s)\widehat{U^{wa,\iota_4}}(\rho,s)\,d\eta d\rho\Big|\lesssim \varep_1^22^{k_2^-}2^{-1.005m},
\end{split}
\end{equation} 
provided that $\mu=(kg,\iota_1)$, $\nu=(wa,\iota_2)$, $\iota_1,\iota_2,\iota_3,\iota_4\in\{+,-\}$, and $k_1,k_2\in[-p_0m-10,m/10]$.

We decompose the solutions $U^{kg,\iota_3}$, $U^{wa,\iota_4}$, and $P_{k_2}V^{wa,\iota_2}$ dyadically in frequency and space as in \eqref{abc20.5}. Then we notice that the contribution when one of the parameters $j_3,k_3,j_4,k_4,j_2$ is large can be bounded using just $L^2$ estimates. For \eqref{Nor81} it suffices to prove that
\begin{equation}\label{Nor82}
2^{-k_2^-}2^{k_3^+-k_4}|\mathcal{C}_{kg}[e^{-is\Lambda_\theta}V_{j_3,k_3}^{kg,\iota_3}(s),e^{-is\Lambda_\nu}V_{j_2,k_2}^{wa,\iota_2}(s),e^{-is\Lambda_\vartheta}V^{wa,\iota_4}_{j_4,k_4}(s)](\xi)|\lesssim \varep_1^32^{-1.01m}
\end{equation}
for any $k_2\in[-p_0m-10,m/10]$, $k_3,k_4\leq m/10$, and $j_2,j_3,j_4\leq 2m$, where $\theta=(kg,\iota_3)$, $\vartheta=(wa,\iota_4)$, and, with $m_1,m_2,m_3,m_4$ as in \eqref{abc9},
\begin{equation}\label{Nor83}
\begin{split}
\mathcal{C}_{kg}[f,g,h](\xi):=&\int_{\mathbb{R}^3\times\mathbb{R}^3}\frac{\varphi_k(\xi)\varphi_{k_1}(\xi-\eta)m_1(\xi-\eta)m_2(\eta)}{\Lambda_{kg}(\xi)-\Lambda_\mu(\xi-\eta)-\Lambda_\nu(\eta)}\\
&\times m_3(\xi-\eta-\rho)m_4(\rho)\cdot\widehat{f}(\xi-\eta-\rho)\widehat{g}(\eta)\widehat{h}(\rho)\,d\eta d\rho.
\end{split}
\end{equation}

{\bf{Substep 2.1.}} Assume first that
\begin{equation}\label{pol1}
j_3\geq 0.99 m-3k_3^+.
\end{equation}
Let $Y$ denote the left-hand side of \eqref{Nor82}. Using Lemma \ref{pha2} and \ref{L1easy} (i) we estimate
\begin{equation}\label{pol2}
\begin{split}
Y&\lesssim 2^{k^+_3-k_4}2^{-2k_2+5\max(k^+,k_2^+)}\|V_{j_3,k_3}^{kg,\iota_3}(s)\|_{L^2}\|e^{-is\Lambda_\nu}V_{j_2,k_2}^{wa,\iota_2}(s)\|_{L^\infty}\|V^{wa,\iota_4}_{j_4,k_4}(s)\|_{L^2}\\
&\lesssim \varep_1^32^{\kappa m}2^{-j_3-dk_3^+}2^{-m}2^{-k_2}2^{-k_4/2},
\end{split}
\end{equation}
where in the last line we used bounds from Lemma \ref{dtv6}. Since $2^{-k_2}\lesssim 2^{0.68m}$ and $j_3+3k_3^+\geq 0.99 m$, this suffices to prove \eqref{Nor82} when $k_4\geq -0.56m$. On the other hand, if $k_4\leq -0.56m$, then we estimate in the Fourier space. Using \eqref{pha3}, \eqref{Nor63}, and \eqref{abc21}
\begin{equation}\label{pol3}
\begin{split}
Y&\lesssim 2^{k_3^+-k_4}2^{-2k_2+3\max(k^+,k_2^+)}\|\widehat{V_{j_3,k_3}^{kg,\iota_3}}(s)\|_{L^\infty}2^{3k_2/2}\|\widehat{V_{j_2,k_2}^{wa,\iota_2}}(s)\|_{L^2}2^{3k_4/2}\|\widehat{V^{wa,\iota_4}_{j_4,k_4}}(s)\|_{L^2}\\
&\lesssim \varep_1^32^{\kappa m}2^{-j_3/2+\delta j_3}2^{k_4}2^{-k_3}.
\end{split}
\end{equation}
Since $k_4\leq -0.56m$, this suffices to prove \eqref{Nor82} when $k_3\geq -0.01m$. Finally, if $k_4\leq -0.56m$ and $k_3\leq -0.01m$ then $k_2\geq -\kappa m-10$ (due to the assumption $k\geq -\kappa m$) and a similar estimate gives
\begin{equation}\label{pol4}
\begin{split}
Y&\lesssim 2^{k_3^+-k_4}2^{-2k_2+3\max(k^+,k_2^+)}2^{3k_3/2}\|\widehat{V_{j_3,k_3}^{kg,\iota_3}}(s)\|_{L^2}\|\widehat{V_{j_2,k_2}^{wa,\iota_2}}(s)\|_{L^\infty}2^{3k_4/2}\|\widehat{V^{wa,\iota_4}_{j_4,k_4}}(s)\|_{L^2}\\
&\lesssim \varep_1^32^{0.01 m}2^{-j_3}2^{k_4}.
\end{split}
\end{equation} 
This completes the proof of \eqref{Nor82} when $j_3\geq 0.99 m-3k_3^+$.

{\bf{Substep 2.2.}} Assume now that $j_3\leq 0.99 m-3k_3^+$. We notice that the $\eta$ gradient of the phase $-s\Lambda_\theta(\xi-\eta-\rho)-s\Lambda_\nu(\eta)$ is $\gtrsim 2^m2^{-2k_3^+}$ in the support of the integral in \eqref{Nor83}. Similarly, the $\rho$ gradient of the phase $-s\Lambda_\theta(\xi-\eta-\rho)-s\Lambda_\vartheta(\rho)$ is $\gtrsim 2^m2^{-2k_3^+}$ in the support of the integral. Using Lemma \ref{tech5} (integration by parts in $\eta$ or $\rho$), the contribution is negligible unless
\begin{equation}\label{pol7}
j_2\geq 0.99 m-3k_3^+\quad\text{ and }\quad j_4\geq 0.99 m-3k_3^+.
\end{equation}

Given \eqref{pol7}, we estimate first, as in \eqref{pol3},
\begin{equation*}
\begin{split}
Y&\lesssim 2^{k_3^+-k_4}2^{-2k_2+3\max(k^+,k_2^+)}\|\widehat{V_{j_3,k_3}^{kg,\iota_3}}(s)\|_{L^\infty}2^{3k_2/2}\|\widehat{V_{j_2,k_2}^{wa,\iota_2}}(s)\|_{L^2}2^{3k_4/2}\|\widehat{V^{wa,\iota_4}_{j_4,k_4}}(s)\|_{L^2}\\
&\lesssim \varep_1^32^{\kappa m}2^{-j_2-j_4}2^{-k_2}2^{-k_3}2^{-10k_3^+}.
\end{split}
\end{equation*}
This suffices if $k_3\geq -0.2m$. On the other hand, if $k_3\leq -0.2m$ then we may assume that $\max(k_2,k_4)\geq -\kappa m-10$ (due to the assumption $k\geq -\kappa m$) and estimate as in \eqref{pol2},
\begin{equation*}
\begin{split}
Y&\lesssim 2^{k^+_3-k_4}2^{-2k_2+5\max(k^+,k_2^+)}\|e^{-is\Lambda_\theta}V_{j_3,k_3}^{kg,\iota_3}(s)\|_{L^\infty}\|V_{j_2,k_2}^{wa,\iota_2}(s)\|_{L^2}\|V^{wa,\iota_4}_{j_4,k_4}(s)\|_{L^2}\\
&\lesssim \varep_1^32^{\kappa m}2^{-m}2^{-j_2-j_4}2^{-5k_2/2}2^{-3k_4/2}2^{-10k_3^+}.
\end{split}
\end{equation*}
This suffices to prove \eqref{Nor82} when $k_4\geq -0.1 m$. Finally, if $k_3,k_4\leq -0.1 m$ and $k_2\geq -\kappa m-10$ then we estimate, as in \eqref{pol4} and using also \eqref{Nor63.1},
\begin{equation*}
\begin{split}
Y&\lesssim 2^{k_3^+-k_4}2^{-2k_2+3\max(k^+,k_2^+)}2^{3k_3/2}\|\widehat{V_{j_3,k_3}^{kg,\iota_3}}(s)\|_{L^2}\|\widehat{V_{j_2,k_2}^{wa,\iota_2}}(s)\|_{L^\infty}2^{3k_4/2}\|\widehat{V^{wa,\iota_4}_{j_4,k_4}}(s)\|_{L^2}\\
&\lesssim \varep_1^32^{0.01 m}2^{-j_4}2^{-j_2/2+\kappa j_2}2^{-10k_3^+},
\end{split}
\end{equation*} 
which suffices. This completes the proof of the the bounds \eqref{Nor75.2}.

{\bf{Step 3: proof of \eqref{Nor75.3}.}} Recall that $e^{-it\Lambda_{wa,\iota_2}}\partial_tV^{wa,\iota_2}(t)=\mathcal{N}^{wa}(t)$ for any $t\in[0,T]$. If $k_1\leq -0.01m$ then we may assume that $k_2\geq -\kappa m-10$ (due to the assumption $k\geq -\kappa m$). Using \eqref{abc30} we estimate the left-hand side of \eqref{Nor75.3} by
\begin{equation*}
C2^m2^{-k_2}2^{2\max(k_1^+,k_2^+)}\|\widehat{P_{k_1}V^\mu}\|_{L^2}\|\widehat{P_{k_2}\mathcal{N}^{wa}}\|_{L^2}\lesssim \varep_1^22^{k_1^-}2^{2\kappa m},
\end{equation*}
which suffices. We can also decompose $P_{k_1}V^{kg,\iota_1}=\sum_{j_1\geq -k_1^-}V^{kg,\iota_1}_{j_1,k_1}$, and notice that the contribution of the localized profiles for which $j_1\geq 0.1 m$ can be bounded in a similar way, using \eqref{Nor63}.  After these reductions it remains to prove that
\begin{equation}\label{Nor91}
|\varphi_k(\xi)T_{\mu\nu}^{kg}[V^{kg,\iota_1}_{j_1,k_1}(s),e^{is\Lambda_{wa,\iota_2}}P_{k_2}\mathcal{N}^{wa}(s)](\xi)|\lesssim \varep_1^22^{-1.005m}2^{k_2^-},
\end{equation}
for any $s\in[2^{m-1},2^{m+1}]$, $k_2\in[-p_0m-10,m/10]$, $k_1\geq -0.01m$, and $j_1\leq 0.1 m$.

We examine now the quadratic nonlinearities $\mathcal{N}^{wa}$ in \eqref{on6.1L}. We define the trilinear operators
\begin{equation}\label{Nor92}
\begin{split}
\mathcal{C}'_{kg}[f,g,h](\xi):=&\int_{\mathbb{R}^3\times\mathbb{R}^3}\frac{\varphi_k(\xi)\varphi_{k_2}(\eta)m_1(\xi-\eta)m_2(\eta)}{\Lambda_{kg}(\xi)-\Lambda_\mu(\xi-\eta)-\Lambda_\nu(\eta)}\\
&\times m_3(\eta-\rho)m_4(\rho)\cdot\widehat{f}(\xi-\eta)\widehat{g}(\eta-\rho)\widehat{h}(\rho)\,d\eta d\rho,
\end{split}
\end{equation} 
where $m_1,m_2,m_3,m_4$ are as in \eqref{abc9}. For \eqref{Nor91} it suffices to prove that
\begin{equation}\label{Nor95}
2^{-k_2^-}|\mathcal{C}'_{kg}[e^{-is\Lambda_{\mu}}V^{kg,\iota_1}_{j_1,k_1}(s),e^{-is\Lambda_{kg,\iota_3}}V^{kg,\iota_3}_{j_3,k_3}(s),e^{-is\Lambda_{kg,\iota_4}}V^{kg,\iota_4}_{j_4,k_4}(s)](\xi)|\lesssim \varep_1^22^{-1.01m},
\end{equation}
where $s,k_1,k_2,j_1$ are as in \eqref{Nor91} and $(k_3,j_3),(k_4,j_4)\in\mathcal{J}$.

Using Lemma \ref{pha2} (ii) and \eqref{vcx5}, we estimate the left-hand side of \eqref{Nor95} by
\begin{equation*}
\begin{split}
C2^{-2k_2+5\max(k^+,k_1^+)}&\|e^{-is\Lambda_{\mu}}V^{kg,\iota_1}_{j_1,k_1}(s)\|_{L^\infty}\|V^{kg,\iota_3}_{j_3,k_3}(s)\|_{L^2}\|V^{kg,\iota_4}_{j_4,k_4}(s)\|_{L^2}\lesssim \varep_1^32^{-1.49m}2^{-j_3}2^{-j_4}2^{-2k_2}.
\end{split}
\end{equation*} 
This suffices if $2^{-0.48m}2^{-j_3}2^{-j_4}2^{-2k_2}\lesssim 1$. Otherwise, if $j_3+j_4+0.48 m\leq -2k_2-120$ then we may assume that $j_3\leq j_4$ (so $j_3\leq 0.45 m-50$ since $k_2\geq -0.68m-10$) and use \eqref{vcx5} again to estimate the left-hand side of \eqref{Nor95} by
\begin{equation*}
\begin{split}
C2^{-2k_2+3\max(k^+,k_1^+)}&\|\widehat{V^{kg,\iota_1}_{j_1,k_1}}(s)\|_{L^\infty}2^{3k_2/2}\|e^{-is\Lambda_{kg,\iota_3}}V^{kg,\iota_3}_{j_3,k_3}(s)\|_{L^\infty}\|V^{kg,\iota_4}_{j_4,k_4}(s)\|_{L^2}\\
&\lesssim \varep_1^32^{-1.49m}2^{-k_2^-/2}2^{-k_3^-/2}2^{-j_4}.
\end{split}
\end{equation*}
Since $2^{-k_3^-/2}2^{-j_4}\lesssim 2^{j_3/3-j_4}\lesssim 1$, the bounds \eqref{Nor95} follow. This completes the proof.
\end{proof}

\section{Bounds on the profiles, III: the wave $Z$ norm}\label{DiEs3}

We prove now our main $Z$-norm estimate for the profile $V^{wa}$.

\begin{proposition}\label{BootstrapZ1}
For any $t\in[0,T]$ we have
\begin{equation*}
\|V^{wa}(t)\|_{Z_{wa}}\lesssim\varep_0.
\end{equation*}
\end{proposition}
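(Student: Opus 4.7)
The strategy parallels that of Proposition \ref{BootstrapZ2} but is substantially simpler because the wave component admits linear scattering (Remark \ref{rem}(iii)) so that no phase renormalization is needed. Since $\|V^{wa}(0)\|_{Z_{wa}}\lesssim \varep_0$ by Proposition \ref{IniDat}, my plan is to establish, for each dyadic time block $[t_1,t_2]\subset[2^m-2,2^{m+1}]\cap[0,T]$ and each $k\in\mathbb{Z}$, a time-increment estimate of size $\varep_0 2^{-\delta m/2}$ on an appropriate modified profile, and then sum over $m$. At high frequencies $k\geq 2H'(0)\delta m$, both parts of the $Z_{wa}$-norm follow directly from the weighted $L^2$ bounds of Proposition \ref{DiEs1} applied with $\mathcal{L}\in\{\mathrm{Id},\Omega_{23},\Omega_{31},\Omega_{12}\}$, combined with Lemma \ref{hyt1} and the embedding \eqref{Linfty3.3}, in the same way as in \eqref{Nor41.1}; the slow growth $\langle t\rangle^{H'(0)\delta}$ is absorbed into the high-frequency tail $2^{-(N_0-d')k^+}$.

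For $k\leq 2H'(0)\delta m$ I would write the increment as
\begin{equation*}
\widehat{V^{wa}}(\xi,t_2)-\widehat{V^{wa}}(\xi,t_1)=\sum_{\iota_1,\iota_2,k_1,k_2}\int_{t_1}^{t_2}\!\!\int_{\mathbb{R}^3}e^{is\Phi}m_{\iota_1\iota_2}\varphi_{k_1}(\xi-\eta)\varphi_{k_2}(\eta)\widehat{V^{kg,\iota_1}}(\xi-\eta,s)\widehat{V^{kg,\iota_2}}(\eta,s)\,d\eta ds,
\end{equation*}
with $\Phi=\Phi_{(wa,\iota)(kg,\iota_1)(kg,\iota_2)}$ and $(k_1,k_2)\in\mathcal{X}_k$. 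Lemma \ref{pha2}(i) gives $|\Phi|\gtrsim |\xi|/(4b^2)$ on the support of the integrand, so one may integrate by parts in time. The very low frequencies $k\leq -m+\delta m$ are disposed of directly by the $L^2\times L^\infty$ bounds familiar from Step 2 of the proof of Proposition \ref{BootstrapEE}. In the remaining range, Lemma \ref{pha2}(ii) provides $\|\mathcal{F}^{-1}\{\Phi^{-1}m\varphi_k\varphi_{k_1}\varphi_{k_2}\}\|_{L^1}\lesssim 2^{-k}2^{4\overline{k}^+}$. The resulting interior term contains $\partial_s[\widehat{V^{kg,\iota_1}}\widehat{V^{kg,\iota_2}}]$ and picks up a factor of $2^{-m}$; combining \eqref{gan1} with the $Z_{kg}$-bounds \eqref{vcx2}--\eqref{vcx2.1} and summing over $(k_1,k_2)$ produces a contribution of size $\lesssim \varep_0 2^{-\delta m}2^{-k^-(1+4\delta)}2^{-(N_0-d')k^+}$. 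The boundary terms at $s\in\{t_1,t_2\}$ do not carry any $2^{-m}$ gain, but they telescope across the dyadic partition; equivalently, one passes to the modified wave profile
\begin{equation*}
\widetilde{V^{wa}}(\xi,t):=\widehat{V^{wa}}(\xi,t)-\sum_{\iota_1,\iota_2,k_1,k_2}\int_{\mathbb{R}^3}\frac{e^{it\Phi(\xi,\eta)}}{i\Phi(\xi,\eta)}m_{\iota_1\iota_2}\varphi_{k_1}(\xi-\eta)\varphi_{k_2}(\eta)\widehat{V^{kg,\iota_1}}(\xi-\eta,t)\widehat{V^{kg,\iota_2}}(\eta,t)\,d\eta,
\end{equation*}
whose time-differences are governed only by the interior terms and therefore sum in $m$, while the quadratic correction $\widetilde{V^{wa}}-\widehat{V^{wa}}$ is bounded in $Z_{wa}$ uniformly in $t$ using only the pointwise $Z_{kg}$-bound.

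For the atomic $L^2$ part of $Z_{wa}$ I would use Lemma \ref{hyt1} to reduce to controlling $\|\varphi_k\partial_{\xi_l}\widehat{V^{wa}}\|_{L^2}+2^{\max(-k,0)}\|P_kV^{wa}\|_{L^2}$, and repeat the same integration-by-parts scheme after first differentiating in $\xi_l$. When $\partial_{\xi_l}$ falls on the phase it effectively multiplies by $s\sim 2^m$, which is absorbed by the $2^{-m}$ gain from normal form; when it falls on a profile $\widehat{V^{kg,\iota_1}}(\xi-\eta)$, the weighted $L^2$-bound of Proposition \ref{DiEs1} (with $\mathcal{L}=\mathrm{Id}$) supplies the required control, its $\langle s\rangle^{H'(0)\delta}$ loss being absorbed either by $2^{-(N_0-d')k^+}$ at high $k$ or by the $2^{-m}$ gain at low $k$.

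The main obstacle I expect is the simultaneous management of the factor $2^{-k}$ from $|\Phi|^{-1}$ at low wave output frequency and of the weak pointwise $Z_{kg}$-bound $|\widehat{V^{kg}}|\lesssim \varep_1 2^{-k^-(1/2-\kappa)}$ in high-high KG interactions $(k_1,k_2)\sim(k',k')$ with $k'\gg k$. In such interactions the quadratic correction built into $\widetilde{V^{wa}}$ contributes a term of order $\varep_1^2 2^{-k}$ after summation in $k'$, which matches the required $Z_{wa}$-size $\varep_0 2^{-k^-(1+4\delta)}2^{-(N_0-d')k^+}$ precisely because $\varep_1^2=\varep_0^{4/3}\leq\varep_0$, the extra weight $2^{-4\delta k^-}\geq 1$ for $k\leq 0$ goes in our favor, and the sum over $k'$ is made finite by $2^{-(N_0-d')k'^+}$ coming from the high-frequency tail of the $Z_{kg}$-norm.
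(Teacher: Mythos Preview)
Your outline has the right architecture at high frequencies, but there is a genuine gap in the normal-form step at low output frequency. After integrating by parts in $s$ with the \emph{linear} Klein--Gordon profile, the interior term contains $\partial_s\widehat{V^{kg,\iota_1}}$, for which the best available $L^2$ bound is \eqref{cxz5}, namely $\|P_{k_1}\partial_sV^{kg}\|_{L^2}\lesssim\varep_1^2\langle s\rangle^{-1+5\delta}$. After multiplying by $|I_m|\approx 2^m$ and by $|\Phi|^{-1}\lesssim 2^{-k}$, the increment is only $\lesssim\varep_1^3 2^{5\delta m}2^{-k^-}$, which is \emph{not} summable in $m$; neither \eqref{gan1} nor the $Z_{kg}$ bounds \eqref{vcx2}--\eqref{vcx2.1} remove this $2^{5\delta m}$ loss. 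The paper cures this by performing the normal form with the \emph{modified} profile $V^{kg}_\ast$ from \eqref{Nor5}: the denominators become $\Phi_{\sigma\mu\nu}+\dot\Theta_\mu+\dot\Theta_\nu$ (see \eqref{par7}), and the interior terms involve $\partial_sV^{kg}_\ast$, which by Lemma \ref{Nor20} splits into a remainder $\mathcal{R}$ with genuine decay $\langle s\rangle^{-1.01}$ and a structured piece $\mathcal{Q}\sim e^{2is\Lambda_{kg}}u_{low}\,V^{kg,-}$. The latter yields the \emph{nonresonant} phase $s(\Lambda_{kg}(\xi-\eta)+\Lambda_{kg}(\eta))$, whose $\eta$-gradient is bounded below; had one used the linear profile, the extra piece $i\dot\Theta\,V^{kg,+}$ would produce the \emph{resonant} phase $s(-\Lambda_{kg}(\xi-\eta)+\Lambda_{kg}(\eta))$, stationary at $\eta=\xi/2$, and the argument would fail for $k$ near $-m$ (this is exactly the obstruction flagged in the paper's footnote in the proof of \eqref{par31}). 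So the modified profile is not optional here.

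There is a second, more structural gap in your treatment of the atomic part of $Z_{wa}$. Lemma \ref{hyt1} relates $A_k$ to $B_k=\big(\sum_j 2^{2j}\|Q_{jk}f\|_{L^2}^2\big)^{1/2}$, an $\ell^2_j$ quantity, whereas the $Z_{wa}$ norm requires the $\ell^1_j$ sum $\sum_j 2^j\|Q_{jk}f\|_{L^2}$; controlling $\|\varphi_k\partial_{\xi_l}\widehat{V^{wa}}\|_{L^2}$ therefore does not close the estimate. The paper instead works directly on $Q_{jk}$ of the increment (Lemma \ref{BootstrapZ12}): for $j\geq m+\delta'm$ an approximate finite-speed-of-propagation argument (integration by parts in $\xi$ in the physical-space formula \eqref{par43}) shows the contribution is negligible, and for $j\leq m+\delta'm$ the normal-form machinery above is applied. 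Without this large-$j$ localization the $\ell^1_j$ sum is not under control.
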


The rest of the section is concerned with the proof of this proposition. In view of the definition \eqref{sec5} it suffices to prove that for any $m\geq 1$ and $k\in\mathbb{Z}$ we have
\begin{equation}\label{par20}
\sum_{j\geq\max(-k,0)}2^j\|Q_{jk}V^{wa}(t_2)-Q_{jk}V^{wa}(t_1)\|_{L^2}\lesssim \varep_02^{-\delta m}2^{-k^-(1/2+\kappa)}2^{-(N_0-d')k^+}.
\end{equation} 
for any $t_1,t_2\in[2^m-2,2^{m+1}]\cap [0,T]$. This follows from Lemmas \ref{BootstrapZ11}--\ref{BootstrapZ13} below.

\begin{lemma}\label{BootstrapZ11}
For any $m\geq 1$ and $k\in\mathbb{Z}$ let $J_0:=m(1+\kappa)+2|k|+10$. Then, for any $t_1,t_2\in[2^m-2,2^{m+1}]\cap [0,T]$
\begin{equation}\label{par20.5}
\sum_{j\geq J_0}2^j\|Q_{jk}V^{wa}(t_2)-Q_{jk}V^{wa}(t_1)\|_{L^2}\lesssim \varep_02^{-\delta m}2^{-k^-(1/2+\kappa)}2^{-(N_0-d')k^+}.
\end{equation} 
\end{lemma}

\begin{proof}
This is a bound on the contribution of large $j$ in the sum in \eqref{par20}. To prove it we use an approximate finite speed of propagation argument. Since $\partial_tV^{wa}=e^{it\Lambda_{wa}}\mathcal{N}^{wa}$, for \eqref{par20.5} it suffices to prove that
\begin{equation*}
2^{j(1+\delta)+m}\|\varphi_j\cdot e^{it\Lambda_{wa}}P_k\mathcal{N}^{wa}(t)\|_{L^2}\lesssim \varep_02^{-\delta m}2^{-k^-(1/2+\kappa)}2^{-N_0k^++d'k^+},
\end{equation*}
 for any $t\in[2^{m}-2,2^{m+1}]\cap[0,T]$ and $j\geq J_0$. With $I$ as in \eqref{abc36}, it suffices to show that
\begin{equation}\label{par26}
\sum_{(k_1,k_2)\in\mathcal{X}_k}\|\varphi_j\cdot e^{it\Lambda_{wa}}P_kI[P_{k_1}U^\mu,P_{k_2}U^\nu](t)\|_{L^2}\lesssim \varep_1^22^{-j(1+\delta)}2^{-m(1+\delta)}2^{-N_0k^++d'k^+},
\end{equation}
for any $m\geq 1$, $k\in\mathbb{Z}$, $j\geq J_0$, $t\in[2^{m}-2,2^{m+1}]\cap[0,T]$, and $\mu,\nu\in\{(kg,+),(kg,-)\}$.

Notice first that the contribution of the pairs $(k_1,k_2)$ with $\max(k_1,k_2)\geq j$ or $\min(k_1,k_2)\leq -j$ can be controlled easily using just the $L^2$ bounds \eqref{abc21} and \eqref{abc22.1}. On the other hand, if $k_1,k_2\in[-j,j]$ then we decompose 
\begin{equation*}
P_{k_1}U^\mu=\sum_{j_1\geq -k_1^-}e^{-it\Lambda_\mu}V^{\mu}_{j_1,k_1},\qquad P_{k_2}U^\nu=\sum_{j_2\geq -k_2^-}e^{-it\Lambda_\nu}V^{\nu}_{j_2,k_2}
\end{equation*}
as in \eqref{abc20.5}. For \eqref{par26} it suffices to prove that 
\begin{equation}\label{par25}
\begin{split}
\|\varphi_j\cdot e^{it\Lambda_{wa}}P_k&I[e^{-it\Lambda_\mu}V^{\mu}_{j_1,k_1}(t),e^{-it\Lambda_\nu}V^{\nu}_{j_2,k_2}(t)]\|_{L^2}\\
&\lesssim \varep_1^22^{-j(1+\delta)}2^{-m(1+\delta)}2^{-\delta j_1}2^{-\delta j_2}2^{-N(1)k^++2k^+},
\end{split}
\end{equation}
for any $m,k,j,t,\mu,\nu$ as before, and any $(k_1,j_1),(k_2,j_2)\in\mathcal{J}$ with $k_1,k_2\in[-j,j]$.

If $\min(j_1,j_2)\geq j(1-\delta)$ then we estimate the left-hand side of \eqref{par25} by
\begin{equation*}
C2^{3k/2}\|V^{\mu}_{j_1,k_1}(t)\|_{L^2}\|V^{\nu}_{j_2,k_2}(t)\|_{L^2}\lesssim \varep_1^22^{3k/2}2^{-j_1-j_2}2^{-N(1)k_1^+-N(1)k_2^+}2^{20\delta m},
\end{equation*}
using \eqref{abc22}. This gives the desired bound \eqref{par25}, since $j\geq J_0\geq m(1+\kappa)$. 

On the other hand, if $\min(j_1,j_2)\leq j(1-\delta)$ then the left-hand side of \eqref{par25} is negligible. Indeed, we may assume that $j_1\leq j(1-\delta)$ and write
\begin{equation*}
\begin{split}
\varphi_j(x)\cdot e^{it\Lambda_{wa}}&P_kI[e^{-it\Lambda_\mu}V^{\mu}_{j_1,k_1}(t),e^{-it\Lambda_\nu}V^{\nu}_{j_2,k_2}(t)](x)=C\varphi_j(x)\int_{\mathbb{R}^3\times\mathbb{R}^3}\varphi_k(\xi)e^{ix\cdot\xi}\\
&\times e^{it(\Lambda_{wa}(\xi)-\Lambda_\mu(\xi-\eta)-\Lambda_\nu(\eta))}m_1(\xi-\eta)\widehat{V^{\mu}_{j_1,k_1}}(\xi-\eta,t)m_2(\eta)\widehat{V^{\nu}_{j_2,k_2}}(\eta,t)\,d\xi d\eta.
\end{split}
\end{equation*}
We integrate by parts in $\xi$ many times, using Lemma \ref{tech5}; at each integration by parts we gain a factor of $|x|\approx 2^j$ and lose a factor $\lesssim 2^m+2^{|k|}+2^{j_1}$. It follows that the left-hand side of \eqref{par25} is bounded by $C2^{-100 j}2^{-j_2}$ if $j_1\leq j(1-\delta)$. The desired bounds \eqref{par25} follow, which completes the proof of the lemma.
\end{proof}

We prove now the bounds \eqref{par20} when $k$ is not close to $0$, using Proposition  \ref{DiEs1}.

\begin{lemma}\label{BootstrapZ12}
The bounds \eqref{par20} hold if $k\geq \kappa m/100-10$ or $k\leq -\kappa m$.
\end{lemma}

\begin{proof} It follows from Proposition \ref{DiEs1} that
\begin{equation*}
2^{k/2}\|\varphi_{k}(\xi)(\partial_{\xi_l}\widehat{V^{wa}})(\xi,t)\|_{L^2_\xi}\lesssim\varep_0\langle t\rangle^{H(1)\delta}2^{-N_0k^++dk^+},
\end{equation*}
for any $t\in[0,T]$, $k\in\mathbb{Z}$, $l\in\{1,2,3\}$. Using Lemma \ref{hyt1}, we have
\begin{equation}\label{par21}
\sup_{j\geq -k^-}2^j\|Q_{jk}V^{wa}(t)\|_{L^2}\lesssim\varep_0\langle t\rangle^{H(1)\delta}2^{-N_0k^++dk^+}2^{-k/2}.
\end{equation}
Given the assumption on $k$, this suffices to control the contribution of the sum over $j\leq J_0$ in \eqref{par20}. The remaining part of the sum is suitably bounded because of Lemma \ref{BootstrapZ11}.
\end{proof}

Finally, we also prove the bounds \eqref{par20} when $k$ is close to $0$..

\begin{lemma}\label{BootstrapZ13}
The bounds \eqref{par20} hold if $m\geq 1/\kappa$ and $k\in[-\kappa m,\kappa m/100]$.
\end{lemma}

\begin{proof} In view of Lemma \ref{BootstrapZ11}, it suffices to show that
\begin{equation*}
2^{J_0}\|P_kV^{wa}(t_2)-P_kV^{wa}(t_1)\|_{L^2}\lesssim \varep_02^{-\delta m}2^{-k^-(1/2+\kappa)}2^{-(N_0-d')k^+}.
\end{equation*}
Recall that $\partial_tV^{wa}=e^{it\Lambda_{wa}}\mathcal{N}^{wa}$ and $k\in[-\kappa m,\kappa m/100]$. It remains to show that
\begin{equation}\label{par28}
\sum_{(k_1,k_2)\in\mathcal{X}_k}\Big\|\varphi_k(\xi)\int_{t_1}^{t_2}e^{is\Lambda_{wa}(\xi)}\mathcal{F}\{I[P_{k_1}U^{kg,\iota_1},P_{k_2}U^{kg,\iota_2}]\}(\xi,s)\,ds\Big\|_{L^2_\xi}\lesssim  \varep_1^22^{-m-3\kappa m}
\end{equation}
for any $t_1,t_2\in[2^{m-1},2^{m+1}]\cap [0,T]$ and $\iota_1,\iota_2\in\{+,-\}$, where $I$ is as in \eqref{abc36}.

It is easy to bound the contribution of the pairs $(k_1,k_2)$ in \eqref{par28} for which either $\min(k_1,k_2)\leq -0.9m$ or $\max(k_1,k_2)\geq m/10$, using just the $L^2$ estimates in \eqref{vcx1.1}. For the remaining pairs we would like to integrate by parts in time. We define the bilinear operators $T^{wa}_{\mu\nu}$ by
\begin{equation}\label{par29}
T^{wa}_{\mu\nu}[f,g](\xi,s):=\int_{\mathbb{R}^3}\frac{e^{is\Phi_{(wa,+)\mu\nu}(\xi,\eta)}}{\Phi_{(wa,+)\mu\nu}(\xi,\eta)}m_1(\xi-\eta)\widehat{f}(\xi-\eta,s)\cdot m_2(\eta)\widehat{g}(\eta,s)\,d\eta,
\end{equation}
where $s\in[0,T]$, $\mu,\nu\in\{(kg,+),(kg,-)\}$, $\Phi_{(wa,+)\mu\nu}(\xi,\eta)=\Lambda_{wa}(\xi)-\Lambda_{\mu}(\xi-\eta)-\Lambda_{\nu}(\eta)$ (see \eqref{on9.2}), and $m_1,m_2$ are as in \eqref{abc9}. Compare with the definition \eqref{Nor53.1}. We integrate by parts in time to estimate
\begin{equation*}
\Big|\int_{t_1}^{t_2}e^{is\Lambda_{wa}(\xi)}\mathcal{F}\{I[P_{k_1}U^{\mu},P_{k_2}U^{\nu}]\}(\xi,s)\,ds\Big|\lesssim J''_1(\xi)+J''_2(\xi)
\end{equation*}
where, with $\mu=(kg,\iota_1)$, $\nu=(nu,\iota_2)$, and $T^{kg}_{\mu\nu}$ defined as in \eqref{par29},
\begin{equation*}
\begin{split}
&J''_1(\xi):=\sum_{s\in\{t_1,t_2\}}|T_{\mu\nu}^{wa}[P_{k_1}V^\mu,P_{k_2}V^\nu](\xi,s)|\\
&J''_2(\xi):=\int_{t_1}^{t_2}|T_{\mu\nu}^{wa}[\partial_s(P_{k_1}V^\mu),P_{k_2}V^\nu](\xi,s)|\,ds+\int_{t_1}^{t_2}|T_{\mu\nu}^{wa}[P_{k_1}V^\mu,\partial_s(P_{k_2}V^\nu)](\xi,s)|\,ds.
\end{split}
\end{equation*}
Using also the symmetry in $k_1,k_2$, for \eqref{par28} it suffices to prove that for 
\begin{equation}\label{par30}
\|\varphi_k(\xi)T_{\mu\nu}^{wa}[P_{k_1}V^\mu,P_{k_2}V^\nu](\xi,s)\|_{L^2_\xi}\lesssim \varep_1^22^{-m-4\kappa m},
\end{equation}
\begin{equation}\label{par31}
2^m\|\varphi_k(\xi)T_{\mu\nu}^{wa}[P_{k_1}V^\mu,(\partial_sP_{k_2}V^\nu)](\xi,s)\|_{L^2_\xi}\lesssim \varep_1^22^{-m-4\kappa m},
\end{equation}
provided that $s\in[2^{m-1},2^{m+1}]$, $k\in [-\kappa m,\kappa m/100]$, $k_1,k_2\in[-0.9m,m/10]$, $\mu=(kg,\iota_1)$, $\nu=(kg,\iota_2)$, and $\iota_1,\iota_2\in\{+,-\}$.

To prove prove \eqref{par30}--\eqref{par31} we notice that, as a consequence of \eqref{pha4} and Lemma \ref{L1easy} (ii),
\begin{equation}\label{par35}
\|\varphi_l(\xi)T_{\mu\nu}^{wa}[P_{l_1}f,P_{l_2}g](\xi,s)\|_{L^2_\xi}\lesssim 2^{-l}2^{4\max(l_1^+,l_2^+)}\|e^{-is\Lambda_\mu}P_{l_1}f\|_{L^{p_1}}\|e^{-is\Lambda_\nu}P_{l_2}g\|_{L^{p_2}}
\end{equation}
for any $f(s),g(s)\in L^2(\mathbb{R}^3)$, $l,l_1,l_2\in\mathbb{Z}$, and $(p_1,p_2)\in\{(2,\infty),(\infty,2)\}$. Therefore
\begin{equation*}
\begin{split}
\|\varphi_k(\xi)T_{\mu\nu}^{wa}[P_{k_1}V^\mu,P_{k_2}V^\nu](\xi,s)\|_{L^2_\xi}+2^m\|\varphi_k(\xi)T_{\mu\nu}^{wa}[P_{k_1}V^\mu,(\partial_sP_{k_2}V^\nu)](\xi,s)\|_{L^2_\xi}\\
\lesssim \varep_1^22^{-k}2^{k_1^-/2}2^{-m+\kappa m}
\end{split}
\end{equation*}
using also \eqref{abc23}, \eqref{abc21}, and \eqref{abc31}. This suffices to prove \eqref{par30}--\eqref{par31} if $k_1\leq -0.1m$.

On the other hand, if $k_1\in[-0.1m,0.1m]$ then we set $2^J=2^{k_1^--30}2^m$ and decompose $P_{k_1}V^{kg,\iota_1}=V_{\leq J,k_1}^{kg,\iota_1}+V_{> J,k_1}^{kg,\iota_1}$, as in \eqref{abc20.5}.
We have, see \eqref{ku10},
\begin{equation}\label{par40}
\begin{split}
\|e^{-is\Lambda_{kg,\iota_1}}V^{kg,\iota_1}_{\leq J,k_1}(s)\|_{L^\infty}&\lesssim \varep_12^{-k_1^-/2}2^{-3m/2}2^{-10k_1^+},\\
\|V^{kg,\iota_1}_{>J,k_1}(s)\|_{L^2}&\lesssim \varep_12^{-k_1^-}2^{-m+\kappa m}2^{-10k_1^+}.
\end{split}
\end{equation}
Therefore, using \eqref{par35} with $(p_1,p_2)=(\infty,2)$, and the bounds \eqref{abc21} and \eqref{abc31},
\begin{equation}\label{par48}
\begin{split}
\|\varphi_k(\xi)T_{\mu\nu}^{wa}[V_{\leq J,k_1}^{\mu},P_{k_2}V^\nu](\xi,s)\|_{L^2_\xi}+2^m\|\varphi_k(\xi)T_{\mu\nu}^{wa}[V_{\leq J,k_1}^{\mu},(\partial_sP_{k_2}V^\nu)](\xi,s)\|_{L^2_\xi}\\
\lesssim \varep_1^22^{-k}2^{-k_1^-/2}2^{-3m/2+\kappa m}.
\end{split}
\end{equation}
In addition, using \eqref{par35} with $(p_1,p_2)=(2,\infty)$, and the $L^\infty$ bounds \eqref{abc23} and \eqref{mnb51.4},
\begin{equation}\label{par49}
\begin{split}
\|\varphi_k(\xi)T_{\mu\nu}^{wa}[V_{>J,k_1}^{\mu},P_{k_2}V^\nu](\xi,s)\|_{L^2_\xi}+2^m\|\varphi_k(\xi)T_{\mu\nu}^{wa}[V_{>J,k_1}^{\mu},(\partial_sP_{k_2}V^\nu)](\xi,s)\|_{L^2_\xi}\\
\lesssim \varep_1^22^{-k}2^{-k_1^-}2^{-2m+3\kappa m}.
\end{split}
\end{equation}
The desired bounds \eqref{par30}--\eqref{par31} follow if $|k_1|\leq 0.1m$ from \eqref{par48}--\eqref{par49}. This completes the proof of the lemma.
\end{proof}

\end{document}